\newcommand{\pathtotrunk}{./}
\newtheorem{example}{Example}
\title{Subfactors of index less than 5, part 1: the principal graph odometer}
\author{Scott~Morrison}
\address{
}%
\email{scott@tqft.net}
\author{Noah~Snyder}
\address{
}%
\email{nsnyder@math.columbia.edu}
\address{%
\rm URLs:\stdspace \tt \url{http://tqft.net/}
\rm and \tt
\url{http://math.columbia.edu/~nsnyder}}
\keywords{
  Subfactors
}
\begin{document}

\begin{abstract}In this series of papers we show that there are exactly ten subfactors, other than $A_\infty$ subfactors, of index between $4$ and $5$.  Previously this classification was known up to index $3+\sqrt{3}$.  In the first paper we give an analogue of Haagerup's initial classification of subfactors of index less than $3+\sqrt{3}$, showing that any subfactor of index less than $5$ must appear in one of a large list of families.  These families will be considered separately in the three subsequent papers in this series.
\end{abstract}

\maketitle



\section{Introduction}
A subfactor is an inclusion $N \subset M$ of von Neumann algebras with trivial center.  The theory of subfactors can be thought of as a nonabelian version of Galois theory, and has had many applications in operator algebras, quantum algebra, and knot theory.  For example, given a finite depth subfactor, you can produce two fusion categories (by taking the even parts) and a $3$-dimensional TQFT (via the Ocneanu-Turaev-Viro construction \cite{MR1191386, MR1317353, MR1686423}).  The fundamental example of a subfactor, which illustrates the analogy with Galois theory, is when $G$ is a finite group with an outer action on a factor $M$ and the fixed point factor $M^G$ is a subfactor of $M$.

A subfactor $N \subset M$ has three key invariants.  From strongest to weakest, they are: the standard invariant (which captures all information about ``basic'' bimodules over $M$ and $N$), the principal and dual principal graphs (which together describe the fusion rules for these basic bimodules), and the index (which is a real number measuring the ``size'' of the basic bimodules \cite{MR696688}).  For the case of the fixed point subfactor the standard invariant recovers the structure of $G$ itself, the principal graph and dual principal graph recover the size of the group and the dimensions of each of its irreducible representations, and the index is the size of $G$.  Thus studying possible standard invariants (called paragroups \cite{MR996454}, $\lambda$-lattices \cite{MR1334479}, or subfactor planar algebras\cite{math.QA/9909027}) of a fixed index is a generalization of studying all groups of a given size.  In turn, studying subfactors with a fixed standard invariant is a generalization of studying all outer actions of a group on a factor.  In particular, just as any finite (or more generally, amenable group) has only one action on the hyperfinite $II_1$ factor \cite{MR587749,MR596082}, any finite depth (or more generally amenable) subfactor planar algebra can be realized in a unique way as a subfactor of the hyperfinite $II_1$ factor \cite{MR1055708, MR1278111}.

Much early work in the theory of subfactors concerned the classification of subfactors of index up to $4$ \cite{MR999799, MR996454, MR1145672, MR1936496}.  One reason for concentrating on index below $4$ was that for every real number greater than $4$ there is a subfactor with that index and principal graph $A_\infty$ \cite{MR1198815}.  The study of subfactors of small index larger than $4$ was initiated by Haagerup who gave an exhaustive list of possible principal graphs other than $A_\infty$ of subfactors of index less than $3+\sqrt{3}$ \cite{MR1317352}.  Most of the graphs on this list were excluded by Bisch \cite{MR1625762} and Asaeda-Yasuda \cite{MR2472028}, while the remaining $3$ graphs and their duals were shown to come from unique subfactor planar algebras/$\lambda$-lattices/paragroups (and thus by Popa \cite{MR1055708, MR1278111} unique subfactors of the hyperfinite $II_1$ factor) by Asaeda-Haagerup \cite{MR1686551} and Bigelow-Morrison-Peters-Snyder \cite{0909.4099}.  The goal of this series of papers is to give the following classification of irreducible subfactors of index less than $5$.  

\begin{thm}
\label{conj:4-5}
There are exactly ten subfactor planar algebras other than Temperley-Lieb with index between $4$ and $5$.
\begin{itemize}
\item The Haagerup planar algebra \cite{MR1686551}, with index $\frac{5+\sqrt{13}}{2}$ and principal bigraph pair $$\left(\bigraph{bwd1v1v1v1p1v1x0p0x1v1x0p0x1duals1v1v1x2v2x1} \bigraph{bwd1v1v1v1p1v1x0p1x0duals1v1v1x2}\right)$$ and its dual.
\item The extended Haagerup planar algebra \cite{0909.4099}, with index $\frac{8}{3}+\frac{2}{3} \operatorname{Re} \sqrt[3]{\frac{13}{2} \left(-5-3 i \sqrt{3}\right)}$ and principal bigraph pair $$\left(\bigraph{bwd1v1v1v1v1v1v1v1p1v1x0p0x1v1x0p0x1duals1v1v1v1v1x2v2x1}, \bigraph{bwd1v1v1v1v1v1v1v1p1v1x0p1x0duals1v1v1v1v1x2}\right)$$ and its dual.
\item The Asaeda-Haagerup planar algebra \cite{MR1686551}, with index $\frac{5+\sqrt{17}}{2}$ and principal bigraph pair $$\left(\bigraph{bwd1v1v1v1v1v1p1v1x0p0x1v1x0p0x1p0x1v1x0x0v1duals1v1v1v1x2v2x1x3v1}, \bigraph{bwd1v1v1v1v1v1p1v0x1p0x1v0x1v1duals1v1v1v1x2v1}\right)$$ and its dual.
\item The 3311 Goodman-de la Harpe-Jones planar algebra \cite{MR999799}, with index $3+\sqrt{3}$ and principal bigraph pair $$\left(\bigraph{bwd1v1v1v1p1p1v1x0x0v1duals1v1v1x2x3v1}, \bigraph{bwd1v1v1v1p1p1v1x0x0v1duals1v1v1x2x3v1}\right)$$ and its dual (since it is not self-dual despite having the same principal and dual principal graphs \cite{MR1355948}).
\item Izumi's self-dual 2221 planar algebra \cite{MR1832764} and its complex conjugate, with index $\frac{5+\sqrt{21}}{2}$ and principal bigraph pair $$\left(\bigraph{bwd1v1v1p1p1v1x0x0p0x1x0duals1v1v2x1}, \bigraph{bwd1v1v1p1p1v1x0x0p0x1x0duals1v1v2x1}\right).$$
\end{itemize}
\end{thm}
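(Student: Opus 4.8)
The plan is to split the statement into an existence half and an exhaustiveness half, with the present paper providing the combinatorial backbone of the latter and the subsequent papers in the series completing it. \emph{Existence} is essentially bookkeeping: the five base planar algebras have already been constructed --- the Haagerup and Asaeda--Haagerup planar algebras in \cite{MR1686551}, the extended Haagerup planar algebra in \cite{0909.4099}, the $3311$ Goodman--de la Harpe--Jones planar algebra in \cite{MR999799}, and Izumi's self-dual $2221$ planar algebra in \cite{MR1832764} --- and in each case one checks the index lies in $(4,5)$ (for extended Haagerup, that the relevant real root of the displayed cubic is $\approx 4.377$). Passing to duals, to the complex conjugate of the $2221$ example, and using that the $3311$ example is not self-dual \cite{MR1355948}, yields ten planar algebras, pairwise non-isomorphic because their principal and dual principal bigraph pairs differ (the two $2221$'s being distinguished instead by an invariant of the planar algebra itself, such as the sign of a rotational eigenvalue).

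The harder half is that these are the only ones. The first step, which is the content of this paper, is the \emph{principal graph odometer}: an explicit enumeration of every bipartite bigraph pair that could be the principal and dual principal graph of a subfactor of index $<5$. The algorithm is driven by (i) the norm bound --- a principal graph has norm $\sqrt{[M{:}N]}<\sqrt5$, hence is a ``sub-$\sqrt5$'' bipartite graph; (ii) a structure theorem for candidates of infinite depth, presenting such a graph as a finite ``weed'' with ``vines'' attached, so that the enumeration reduces to finitely many finite graphs together with finitely many weeds; (iii) compatibility with a dual graph and associativity of the resulting fusion bimodule, which tightly constrain how the graph may grow at each vertex; and (iv) number theory --- the index is a cyclotomic integer obeying strong Galois-theoretic constraints (it is a $d$-number), which eliminates most would-be norms. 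Run with these rules, the odometer terminates with a finite list of finite bigraph pairs and a finite list of weeds.

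The subsequent papers then dispose of each surviving family. A finite candidate is either matched to one of the ten base planar algebras, via a uniqueness theorem proved by analyzing flat connections or by the jellyfish and quadratic-tangle evaluation machinery, or shown to support no subfactor at all --- typically because no flat connection exists, or because a hypothetical lowest-weight rotational eigenvector cannot satisfy the planar-algebra relations, often verified with computer assistance. Each weed is eliminated by showing its translated family contains no subfactor, using triple-point and related obstructions, or the fact that an infinite family of candidate graphs would force principal graph $A_\infty$. Assembling the odometer output of this paper with these case analyses and the existence half gives exactly the ten listed planar algebras.

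The main obstacle is two-layered. Within this paper it is making the odometer provably halt with a \emph{short} list: this requires sharp a priori structure results for principal graphs --- bounds on valence, control of the length of ``necks'', the triple-point obstruction, and the ``$2n+1$ vertices'' obstruction --- so that the branching at each step of the enumeration is finite and mostly dies immediately. Across the series the real difficulty is the most stubborn individual families, in particular the large infinite families of weeds and the cases clustered just below index $3+\sqrt3$, where ruling things out comes down to a delicate number-theoretic or connection-theoretic computation rather than a soft argument.
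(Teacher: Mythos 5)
Your decomposition matches what the paper actually does: Theorem~\ref{conj:4-5} is proved across the whole series, with this paper supplying only the combinatorial reduction to a classification statement (Theorem~\ref{thm:main}) consisting of a finite list of vines and a short list of weeds, the remaining work being delegated to \cite{index5-part2,index5-part3,index5-part4,han-2221} and the existence half to the prior constructions. So the overall route is the paper's.

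A few of the details are misreported, though. The number-theoretic step (cyclotomicity of the index, the results of \cite{1004.0665}) is \emph{not} one of the odometer's pruning rules; inside this paper the odometer is driven only by the associativity test, the triple-point obstruction, the duals-at-depth-$n{+}1$ lemma, and Jones's even quadruple-point obstruction. The number theory is applied afterwards, to the \emph{vines}, in \cite{index5-part4}. There is also no ``$2n+1$ vertices'' obstruction in the paper. Most substantively, the odometer does not ``terminate with a finite list'' in the way you describe: for the annular-multiplicity-$10$ weed the authors observe explicitly that running the odometer appears to produce weeds of arbitrary depth, and they stop it \emph{by hand} at a chosen set of deeper weeds ($\cC$, $\cF$, $\cB$, $\cQ$, $\cQ'$), trading a larger vine list for a manageable weed list. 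The weeds are then killed in the sequels by quadratic-tangle and connections arguments, not by the triple-point obstruction or an $A_\infty$ argument as you suggest. None of this is a logical gap in your outline, but it does change the shape of the enumeration step: the output of this paper is a classification statement with nonempty weeds, not a finished enumeration.
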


As an immediate corollary (using Popa's results \cite{MR1055708, MR1278111}) this theorem gives a classification of all amenable subfactors of the hyperfinite $II_1$ factor of index less than $5$.  However, the classification of non-amenable subfactors coming from Temperley-Lieb is still wide open (see \cite{MR1159284, MR1293872}).

The choice of $5$ as an upper limit is convenient, but somewhat arbitrary.  In particular, classifying the principal graphs of subfactors at index $5$ does not require a significant amount of new work.  However, at index $5$ there are a large number of possible graphs.  Furthermore, although most of these graphs can be realized via group/subgroup subfactors, they may also be realized in other ways.  This will be addressed in work in joint progress with Izumi.

In this paper, we begin by proving a weaker classification result inspired by Haagerup's original argument.  In particular, we only use the combinatorics of the principal graphs of the subfactors.   We describe several known obstructions to being principal graphs, and a method of enumerating principal graphs satisfying certain size bounds.  Combining these, we obtain various classification results.  Much of the subtlety in this paper lies in finding the right balance between looking for obstructions and extending the enumeration, in order to produce classification results that are both true and useful!  

In order to describe our classification (as well as Haagerup's) it's helpful to define two terms.  We use the term \emph{translation of a graph pair} to indicate a graph pair obtained by increasing the supertransitivity (that is, adding a longer chain of edges at the left) by an even integer, and the term \emph{extension of a graph pair} to indicate a graph pair obtained by extending the graphs in any way at greater depths (i.e. adding vertices and edges at the right).  For example, the $D_{2n}$ principal graph pairs are all translations of $D_{4}$, while the Haagerup principal graph pair is an extension of $D_6$, an extension of $A_4$, a translated extension of $D_4$, and a translated extension of $A_2$.   See \S \ref{sec:notation} for more details.

Haagerup's initial classification is given in the following theorem.
\begin{thm} \cite{MR1317352}
Any subfactor of index between $4$ and $3+\sqrt{3}$ is non-amenable with standard invariant the Temperley-Lieb algebra, or its principal graph pair is a translate of one of the following three graph pairs.
\begin{align*}
&\left \{ \left(\bigraph{bwd1v1v1v1p1v1x0p0x1v1x0p0x1duals1v1v1x2v2x1}, \bigraph{bwd1v1v1v1p1v1x0p1x0duals1v1v1x2}\right), \right . \\
	  &\left(\bigraph{bwd1v1v1v1p1v1x0p0x1v1x0p0x1p0x1v1x0x0v1duals1v1v1x2v2x1x3v1}, \bigraph{bwd1v1v1v1p1v0x1p0x1v0x1v1duals1v1v1x2v1}\right), \\
	  & \left. \left(\bigraph{bwd1v1v1v1p1v1x0p0x1v1x1duals1v1v1x2v1}, \bigraph{bwd1v1v1v1p1v1x0p1x0v1x0p0x1duals1v1v1x2v1x2}\right) \right\}
\end{align*}
\
\end{thm}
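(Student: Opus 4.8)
The plan is to run the principal-graph enumeration developed in this paper (the ``odometer'') with the size bound $\|\Gamma\|^2 < 3+\sqrt 3$, and to show that, up to translation and up to interchanging the two graphs, the only candidate graph pairs that survive are the three displayed pairs; any subfactor whose principal graph pair does not appear on that list must therefore have standard invariant the Temperley--Lieb algebra, and at index in $(4,5)$ such a subfactor is automatically non-amenable.

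First I would set up the dichotomy. The principal graph $\Gamma$ and dual principal graph $\Gamma'$ of $N \subset M$ are pointed bipartite graphs with $\|\Gamma\|^2 = \|\Gamma'\|^2 = [M:N]$, and the standard invariant is Temperley--Lieb exactly when $\Gamma$ is of type $A_n$ or $A_\infty$. Since $[M:N] > 4$, $\Gamma$ cannot be any $A_n$, so if the subfactor is not of Temperley--Lieb type then $\Gamma \neq A_\infty$ as well, and hence $\Gamma$ (and likewise $\Gamma'$) must contain a vertex of valence at least $3$. Writing $s$ for the supertransitivity — the length of the initial chain before $\Gamma$ first branches — the bound $\|\Gamma\|^2 < 3+\sqrt 3 \approx 4.73$, which leaves very little room above the norm $2$ of $A_\infty$, forces the branching to be very mild: a branch vertex of valence $\geq 4$, a second branch appearing too soon, or a long unresolved branch all push $\|\Gamma\|^2$ above $3+\sqrt 3$, so the graph must ``close up'' again within a bounded number of depths past the first branch.

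Next I would feed these local constraints into the enumeration, building $\Gamma$ and $\Gamma'$ outward from the branch point depth by depth and pruning using: (i) the norm bound; (ii) the pointed bipartite structure; (iii) the requirement that the even- and odd-depth vertex data of $\Gamma$ be consistent with the fusion data visible from $\Gamma'$ and conversely (associativity of the fusion ring, matching of dimensions); and (iv) the known local obstructions, above all the triple-point obstruction, which sharply restricts the neighbourhood of a branch vertex, together with the number-theoretic obstruction that $[M:N]$ must be an algebraic integer lying in the appropriate (roughly cyclotomic) field rather than an arbitrary real number. With these in hand the enumeration terminates, leaving exactly the three displayed graph pairs up to translation (increasing $s$ by an even integer) and up to swapping $\Gamma \leftrightarrow \Gamma'$. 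The possibility that the enumeration never halts — the graph growing forever without closing up — corresponds precisely to the surviving $A_\infty$ case.

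The hardest part will be proving that the enumeration genuinely terminates: one must show that the obstructions are jointly strong enough that no new, non-translate family can appear beyond a bounded depth past the first branch, and that each of the finitely many surviving partial graphs extends in a unique way. The delicate points are the interaction between $\Gamma$ and $\Gamma'$ in the regime where one has already branched but the other has not, the careful application of the triple-point obstruction at every candidate branch vertex, and the use of the number-theoretic restriction on $[M:N]$ to eliminate the many configurations that pass all the purely combinatorial tests. Haagerup carried out exactly this bookkeeping by hand for index below $3+\sqrt 3$; the contribution of the present paper is to mechanise it via the odometer so that the same method will reach index $5$.
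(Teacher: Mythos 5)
A point of context first: the paper does not prove this theorem, but cites it to \cite{MR1317352} and later, in the Example of \S 2.2, restates it as a classification statement with empty weeds; the nearest thing to a proof in the paper is the worked example in \S 4, where the odometer reproduces just one branch of Haagerup's enumeration (graph pairs starting like the Asaeda--Haagerup seed). So there is no internal proof to compare against. That said, your general strategy --- seed the odometer, prune with the associativity test, apply the triple-point obstruction to surviving partial graphs --- is exactly how the paper attacks its index-$5$ analogue, and is a faithful modern rendering of what Haagerup did by hand.

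However, there is a genuine error in your list of pruning tools. You invoke the number-theoretic constraint that $[M:N]$ lies in a (roughly) cyclotomic field as one of the obstructions used during the enumeration, and this cannot work here. That constraint is not \emph{local} in the sense of \S 3: the index is the Frobenius--Perron eigenvalue of the full, possibly infinite, graph, and it is not determined by the finite initial segment that the odometer has in hand at any stage --- so it cannot be used to prune weeds. Worse, if it could be so used, you would obtain a finite list of graph pairs rather than three families of arbitrary translates, which contradicts the very statement you are trying to prove: the theorem deliberately allows translation by any even integer precisely because Haagerup's combinatorial methods do not bound the supertransitivity. In this paper's framework the cyclotomicity obstruction of \cite{1004.0665} is applied only \emph{after} the classification statement is in hand, to rule out all but finitely many translates of each vine; this is the content of \cite{index5-part4}, and historically it is how \cite{MR2472028, MR2307421} cut the Haagerup family down to supertransitivities $3$ and $7$. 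Drop the cyclotomic obstruction from your enumeration loop --- keep associativity, the triple-point obstruction, and the local dimension/positivity checks --- and the proposal is the right one.
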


Haagerup eliminated all but one graph in the second family using an unpublished connection-based approach, Bisch eliminated the third family completely using fusion algebra techniques \cite{MR1625762}, and Asaeda--Yasuda \cite{MR2472028} eliminated all but two graphs from the first family using arithmetic techniques.

Number theory techniques of Calegari-Morrison-Snyder \cite{1004.0665} (generalizing earlier work of Asaeda-Yasuda \cite{MR2472028}) give an effective bound on how large a translate of a fixed graph can be a principal graph.  Thus any classification along the lines of Haagerup's can now be reduced to a finite list by applying this result.  Unfortunately, in our case the techniques used by Haagerup do not suffice to restrict to the translations of a finite list of graph pairs.   Thus, in addition to a long list of families which can be eliminated using number theoretic techniques, we also have a short list of bad cases.  

\begin{thm}
The principal graphs of any subfactor of index between $4$ and $5$ is a translate of one of an explicit finite list of graph pairs (see Theorem \ref{thm:main}), or is a translated extension of one of the following graphs.
\begin{align*}
\cC &=  \left(\bigraph{bwd1v1v1v1p1v1x0p1x0v1x0v1p1duals1v1v1x2v1}, \bigraph{bwd1v1v1v1p1v1x0p0x1v1x0p1x0p0x1v0x1x0p0x0x1duals1v1v1x2v3x2x1}\right), \displaybreak[1]\\
     \cF &= \left(\bigraph{bwd1v1v1v1p1v1x0p1x0v1x0p0x1v1x0p1x0p0x1v0x1x0p0x0x1v1x0p0x1p0x1duals1v1v1x2v1x2v2x1}, \bigraph{bwd1v1v1v1p1v1x0p0x1v1x0p1x0p0x1p0x1v0x1x0x0p0x0x0x1p1x0x0x0v1x0x0p0x1x0p0x1x0p0x0x1v0x0x0x1p1x0x0x0p0x1x0x0duals1v1v1x2v4x2x3x1v3x2x1x4}\right), \displaybreak[1]\\
      \cB &= \left(\bigraph{bwd1v1v1v1p1v1x0p1x0v1x0p0x1v1x0p0x1v1x0p0x1v1x0p0x1duals1v1v1x2v1x2v2x1}, \bigraph{bwd1v1v1v1p1v1x0p0x1v1x0p1x0p0x1p0x1v0x0x0x1p1x0x0x0v1x0p0x1v0x1p1x0duals1v1v1x2v4x2x3x1v1x2}\right), \displaybreak[1]\\
          \cQ &=\left(\bigraph{bwd1v1v1v1p1p1v1x0x0duals1v1v1x3x2}, \bigraph{bwd1v1v1v1p1p1v1x0x0duals1v1v1x3x2}\right),  \displaybreak[1]\\
              \cQ' &= \left(\bigraph{bwd1v1v1v1p1p1v1x0x0duals1v1v1x2x3}, \bigraph{bwd1v1v1v1p1p1v1x0x0duals1v1v1x2x3}\right)
\end{align*}
\end{thm}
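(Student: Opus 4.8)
The proof is a (computer-assisted) exhaustive enumeration in the spirit of Haagerup \cite{MR1317352}. The idea is to organize all candidate principal graph pairs of a subfactor of index in $(4,5)$ into a rooted search tree, prune that tree aggressively using every available obstruction to being a principal graph, and observe that after pruning only finitely many live branches remain, together with five branches that would continue forever; those five are precisely the translated extensions of $\cC$, $\cF$, $\cB$, $\cQ$, and $\cQ'$, while the finitely many others assemble into the explicit list of Theorem \ref{thm:main}.

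The first step is to assemble the obstructions. For a pair $(\Gamma,\Gamma')$ to be the principal/dual principal graph pair of a subfactor one needs: both graphs bipartite and connected; matching vertex counts at each depth (and literal agreement out to the supertransitivity); the graph norms of $\Gamma$ and $\Gamma'$ equal and, being the square root of the index, lying in $(2,\sqrt5)$; the associativity constraints on the fusion graph, in particular Haagerup's triple-point obstruction and its refinements; and the arithmetic "cyclotomicity" constraint on the index, which via the effective bound of Calegari--Morrison--Snyder \cite{1004.0665} limits how large a translate of any fixed graph pair can be a principal graph. This last input is what turns an a priori infinite list of translates into a finite one; the associativity and norm constraints are what make most extension branches die quickly.

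With these in hand, the second step is to build and run the odometer: starting from the initial (straight-line) graph pair of each supertransitivity, one repeatedly ``increments,'' at each node either declaring the pair complete (recording it, together with its finitely many admissible translates), declaring it dead (some obstruction fires), or passing to the finite list of its minimal one-step extensions at the next depth. One must check this procedure is sound, i.e.\ that every principal graph pair of a subfactor of index in $(4,5)$ occurs in the tree as a recorded node or as a translate/extension of one. Running it, every branch terminates except five, along which the obstructions are all inconclusive and the extensions never stop; these are exactly the translated extensions of $\cC,\cF,\cB,\cQ,\cQ'$, so for those the best possible conclusion is membership in one of these five families, and every other branch contributes only finitely much, giving the list of Theorem \ref{thm:main}.

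The main obstacle is the last point: one must verify that the chosen package of obstructions is simultaneously \emph{sound} --- no rule ever eliminates a genuine principal graph pair --- and \emph{strong enough} that the only surviving infinite branches are those five weeds. Soundness is the more delicate half, since a single over-eager pruning rule would make the theorem false, so each obstruction has to be derived carefully from the subfactor axioms and the computation should be carried out (and ideally independently re-checked) with some care about the enormous case analysis. Finding the right trade-off --- adding just enough obstructions, and extending the enumeration just far enough, to make the tree finite without sacrificing correctness --- is, as the introduction warns, the real content of the argument.
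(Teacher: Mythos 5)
Your high-level picture — an odometer that grows candidate principal-graph pairs depth by depth and prunes them with local obstructions — is the right outline, and it matches the paper's approach in spirit. But there are two genuine gaps, one of which would actually falsify the statement you are trying to prove.

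First, and most seriously, you bring in the Calegari--Morrison--Snyder cyclotomicity bound \cite{1004.0665} as an ingredient that ``turns an a priori infinite list of translates into a finite one'' and you speak of recording a vine ``together with its finitely many admissible translates.'' This misreads what the theorem asserts. The conclusion here (and in Theorem \ref{thm:main}) is that the principal graphs are a \emph{translate} of one of a finite list of graph pairs --- the finite list is a list of graph pairs, and \emph{all} translates of those pairs are allowed. The whole point of the vines/weeds formalism is to get a finite list of ``shapes'' whose translates remain to be examined; the number-theoretic bound is applied only in the sequel \cite{index5-part4} to discard most of those translates. If you inject \cite{1004.0665} into the present argument, you are proving a different (stronger, and at this stage unproven) statement, and none of the obstructions actually used here are of that arithmetic type. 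The obstructions the paper actually deploys are \emph{local}: the associativity test (Lemma \ref{lem:associativity}), the triple-point obstruction (Corollaries \ref{lem:even-triple-point-local}, \ref{lem:odd-triple-point-local}), the duals-at-depth-$(n+1)$ constraint (Lemma \ref{lem:duals}), the even quadruple-point obstruction from quadratic tangles (Theorem \ref{thm:evenquad}), and a handful of ad hoc Frobenius--Perron dimension estimates. Locality is exactly what makes them compatible with the vine/weed framework: they rule out a shape regardless of its translate.

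Second, your proof never mentions the initial reduction that there are no $1$-supertransitive subfactors of index strictly between $4$ and $5$ (Theorem \ref{thm:intermediate}), which is proved by an intermediate-subfactor and ADET fusion-graph argument. Without this step the search space balloons enormously, so it is not a cosmetic omission. Relatedly, the statement that ``every branch terminates except five'' is too strong: the odometer on, e.g., the annular-multiplicity-$10$ weed does not terminate, and $\cC$, $\cF$, $\cB$ (and $\cQ$, $\cQ'$) are \emph{chosen} stopping points that balance the number of vines against the depth of the weeds, not canonically the only infinite branches. The proof has to record this choice explicitly, because a different choice would give a different (also valid) classification statement.

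Finally, a small inaccuracy: the paper deliberately does \emph{not} require the Frobenius--Perron eigenvalues of the two graphs in a bigraph pair to agree while enumerating, precisely because translated extensions are in play; your list of obstructions imposes it as a running constraint, which would be incorrect for intermediate (not-yet-completed) pairs.
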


For these remaining cases we need new techniques.  Cases $\cC$, $\cF$, and $\cB$ (which have initial triple points) will be eliminated in joint work with Penneys and Peters \cite{index5-part2}, while $\cQ$ and $\cQ'$ (which have initial quadruple points) will be eliminated in work in progress joint with Izumi and Jones \cite{index5-part3}.  The uniqueness of $2221$ up to conjugation was proved by Han \cite{han-2221}, while the uniqueness of $3331$ up to duality will be proved in \cite{index5-part3}.  Finally, in \cite{index5-part4} Penneys and Tener apply the number theoretic test of  \cite{1004.0665} to eliminate the remaining cases.


Classifications of subfactors of small index may be of interest for several reasons.  We would like to understand whether the appearance of ``exotic" subfactors (like the Haagerup, extended Haagerup, and Asaeda-Haagerup subfactors) is a common phenomenon, or whether these small examples are truly exceptional.  We would like to understand where the boundary takes place between ``small index" and ``large index."  For example, the smallest possible index other than $4$ for an infinite depth subfactor whose standard invariant is not Temperley-Lieb is unknown.  Similarly, we'd like to know the smallest index for which the standard invariant fails to classify subfactors of the hyperfinite factor \cite{MR2314611}.   In order to answer these questions, eventually we would like to extend our classification up to index $3+\sqrt{5}$ where a Fuss-Catalan infinite depth subfactor \cite{MR1437496} appears which may allow for some of the ``large index" behavior found at index $6$.

We would like to thank Richard Burstein, Vaughan Jones, Dave Penneys, and Emily Peters for helpful conversations and careful reading of the manuscript. We also thank all the attendees of the Bodega Bay ``Planar algebra programming camps'' for their interest in this problem: several attendees are writing sequels to this paper.  Scott Morrison was at Microsoft Station Q at UC Santa Barbara and at the Miller Institute for Basic Research at UC Berkeley during this work, and Noah Snyder was supported in part by RTG grant DMS-0354321 and in part by an NSF Postdoctoral Fellowship at Columbia University. We would also like to acknowledge support from the 
DARPA HR0011-11-1-0001 grant.

\section{Enumerating principal graphs}

\subsection{Notation and background}
\label{sec:notation}
Throughout, we use the following definitions.
\begin{defn}
A \emph{bigraph} is a bipartite graph with a specified root vertex.  We allow graphs with infinitely many vertices, but require that every vertex have finite degree.  The \emph{depth} of a vertex is the geodesic distance from the root.  A bigraph is called finite depth if it has finitely many vertices, in which case its depth is the maximum of the depths of the vertices.
\end{defn}

When we draw bigraphs, the root vertex always appears on the left, and the depth of a vertex is always given by its horizontal distance from the root.

\begin{defn}
The \emph{supertransitivity} of a bigraph $\Gamma$ is the greatest integer $n$ so that up to vertices at depth $n$ the graph $\Gamma$ is just $A_n$. Equivalently, it is the number of edges from the root vertex before the first branch point or multiple edge.
\end{defn}

\begin{defn}
A \emph{bigraph with dual data} is a bigraph together with an involution, called duality, of the vertices at each even depth.
\end{defn}

In diagrams, duality is represented by red arcs joining dual pairs of vertices. Self-dual vertices have a small red dash above them.

\begin{defn}
A \emph{bigraph pair} is a pair of bigraphs with dual data, with depths differing by at most one and the same supertransitivity, together with a bijection, also called duality, between the vertices at each odd depth of one graph with the corresponding vertices at the same odd depth on the other graph.
\end{defn}

In diagrams, we do not explicitly indicate the bijections between odd vertices, but rather use the convention that the vertical ordering of vertices at a given depth determines the bijection: the lowest vertices in each graph at each odd depth are dual to each other, etc.

Two bigraph pairs are isomorphic if there is a graph isomorphism between the underlying graphs which preserves the duality structure and the base vertex.  For example, the following two bigraph pairs are isomorphic:
$$ \left(\bigraph{bwd1v1v1v1p1v1x0p0x1v1x0p0x1duals1v1v1x2v2x1}, \bigraph{bwd1v1v1v1p1v1x0p1x0duals1v1v1x2}\right)$$
$$ \left(\bigraph{bwd1v1v1v1p1v0x1p1x0v0x1p1x0duals1v1v1x2v2x1}, \bigraph{bwd1v1v1v1p1v0x1p0x1duals1v1v1x2}\right)$$
We say a bigraph pair is \emph{equal} if both graphs have the same depth. Note that if a bigraph pair is unequal then the two depths differ by one, and the longer bigraph needs to have even depth, because of the duality bijection between the odd vertices.

The principal graphs of a subfactor naturally carry the structure of a bigraph pair.  For example, the principal graphs of the Haagerup subfactor are the following pair of bigraphs with dual data.
$$ \left \{ \left(\bigraph{bwd1v1v1v1p1v1x0p0x1v1x0p0x1duals1v1v1x2v2x1}, \bigraph{bwd1v1v1v1p1v1x0p1x0duals1v1v1x2}\right) \right \}$$
We say that a bigraph pair $\Gamma$ is a \emph{translate} of $\Gamma_0$ if there is an even integer $k$ such that up to depth $k$, the bigraphs in $\Gamma$ look like the Dynkin diagram $A_k$, and such that $\Gamma_0$ is the induced bigraphs with dual data produced by removing the first $k-1$ vertices from $\Gamma$ and declaring the unique vertex at depth $k$ the new base vertex.  (The integer $k$ is required to be even because duality is a different kind of structure for odd depths and even depths.)
We say a bigraph pair $\Gamma$ is an \emph{extension} of a equal bigraph pair $\Gamma_0$ if $\Gamma$ is the same as $\Gamma_0$ up to the depth of $\Gamma_0$. We will use the phrase ``$\Gamma$ starts like $\Gamma_0$'' as an abbreviation for ``$\Gamma$ is an extension of a translate of $\Gamma_0$''.

For example, the $D_{2n}$ principal graph pairs are all translations of the $D_{4}$ pair. The Haagerup principal graphs are an extension of $D_6$, an extension of $A_4$, a translated extension of $D_4$, and a translated extension of $A_2$.

Since we are often looking not just at a particular bigraph pair, but instead at all ways of extending and translating it,  we do not insist that the Frobenius-Perron eigenvalues of the two graphs in a bigraph pair are equal  (although they are certainly equal for principal bigraph pairs of finite depth subfactors).  Similarly, although most graphs we discuss are finite depth, since we are considering all translated extensions of these graphs our results apply to infinite depth subfactors.

We also define a sequence of numbers associated to any bigraph.
\begin{defn}
The \emph{annular multiplicities} $\{a_n\}_{n \in \Natural}$ of a bigraph $\Gamma$ are defined by the formula
$$a_n = \sum_{r=0}^n (-1)^{r-n} \frac{2n}{n+r} \binom{n+r}{n-r} w_r$$
where $w_r$ is the number of length $2r$ loops on $\Gamma$ based at the initial vertex.
\end{defn}

Note that the annular multiplicity $a_n$ only depends on the bigraph $\Gamma$ out to depth $n$. Since the annular multiplicities for the $A_l$ graphs are $\{1,0,0,0,\ldots\}$, the annular multiplicities $a_n$ for any $k$-supertransitive graph are $a_1=0$ and $a_n = 1$ for $1 \leq n \leq k$. Thus we will often describe the annular multiplicities by dropping this initial string, and listing the sequence of annular multiplicities starting from the first non-trivial entry.
If a bigraph pair is the principal bigraph pair of a subfactor, then the corresponding planar algebra is a representation of the annular Temperley-Lieb category, and these numbers are in fact the multiplicities of the irreducible representations (c.f. \cite{MR1929335, MR1659204, MR2274519}).
More specifically, the irreducible unitary representations of the annular Temperley-Lieb category at a parameter $\delta$ are parametrized by the set
$$\setc{(0,\mu)}{\mu \in [0,\delta]} \cup \setc{(n,\omega)}{n \in \Natural, n \geq 1, \omega \in \Complex, \omega^n = 1},$$
and the annular multiplcity $a_n$ is the sum of the multiplicities of all irreducible representations of the form $(n,x)$ for some $x$.
In this paper, we only use the annular multiplicities to divide up large collections of graphs, but other papers in this series will make further use of the representation theory of the annular Temperley-Lieb category.

\subsection{Classification statements}
Most of the results of this paper are of the following form:

\begin{quote}
Every subfactor whose principal bigraph pair is not $A_\infty$, which starts like a fixed bigraph pair $\Gamma_0$, and which has index strictly between $4$ and $\Lambda \in \Real$ has principal bigraph pair which either
\begin{enumerate}
\item is a translate (but not an extension!) of one of a certain set of bigraph pairs $\cV$, called ``vines'', or
\item is a translate of an extension of one of a certain set of bigraph pairs $\cW$, called ``weeds''.
\end{enumerate}
\end{quote}

We will abbreviate such a result by saying that the data $(\Gamma_0, \Lambda, \cV, \cW)$ is a ``classification statement''. The idea behind the names is that the `vines' can only grow taller (i.e. increase supertransitivity) but the `weeds' can still grow out of control.

There are of course many boring classification statements, in particular we trivially have $(\Gamma_0, \infty, \eset, \{ \Gamma_0 \})$ for any bigraph pair $\Gamma_0$. The interesting classification statements we produce will have weeds which are much larger than the bigraph pair $\Gamma_0$. In ideal circumstances, we even find classification statements in which the set of weeds is empty.

\begin{example}
In \cite{MR1317352}, Haagerup proves the classification statement
$$((\bigraph{bwd1duals1}, \bigraph{bwd1duals1}), 3+\sqrt{3}, \cV_H, \eset)$$
with
\begin{align*}
\cV_H & = \left \{ \left(\bigraph{bwd1v1v1v1p1v1x0p0x1v1x0p0x1duals1v1v1x2v2x1}, \bigraph{bwd1v1v1v1p1v1x0p1x0duals1v1v1x2}\right) \right . \\
	  & \qquad \left(\bigraph{bwd1v1v1v1p1v1x0p0x1v1x0p0x1p0x1v1x0x0v1duals1v1v1x2v2x1x3v1}, \bigraph{bwd1v1v1v1p1v0x1p0x1v0x1v1duals1v1v1x2v1}\right) \\
	  & \qquad \left. \left(\bigraph{bwd1v1v1v1p1v1x0p0x1v1x1duals1v1v1x2v1}, \bigraph{bwd1v1v1v1p1v1x0p1x0v1x0p0x1duals1v1v1x2v1x2}\right) \right\}
\end{align*}
\end{example}

\begin{rem}
These families of graphs are called the Haagerup family, the Asaeda-Haagerup family, and the hexagon family, respectively.
In Haagerup's paper he claims that that in the Asaeda-Haagerup family only supertransitivity $5$ is possible.  Since Haagerup's paper the hexagon family has been entirely ruled out \cite{MR1625762} and the Haagerup family at supertransitivities $11$ and above have been ruled out \cite{MR2307421,MR2472028}.  A uniform argument for all three cases (and indeed excluding all but finitely many examples coming from any vine) can be given using \cite{1004.0665}.  The $3$-supertransitive bigraph pair in the Haagerup family is realised as the principal bigraph pair of the Haagerup subfactor \cite{MR1686551}, and the $7$-supertransitive bigraph pair is realised by the extended Haagerup subfactor \cite{0909.4099}. The Asaeda-Haagerup bigraph pair is also realised \cite{MR1686551}. In this sense, the classification statement referred to above in terms of vines (and no weeds) has since been turned into a complete classification.  The goal of this paper is to prove a classification statement that can then lead to a complete classification.  Unlike in Haagerup's case, we will still have several weeds in our classification.
\end{rem}

\subsection{The odometer}
\label{sec:odometer}
Given a bigraph $\Gamma$, we can readily enumerate all its extensions with depth one greater and with Frobenius-Perron eigenvalue less than some limit $\Lambda$.  Indeed, the bound on the size of the Frobenius-Perron eigenvalue gives a bound on the valency of each vertex.  In practice, one must be a little careful in order to do this enumeration efficiently.  Suppose we've already enumerated all extensions of $\Gamma$ where the last depth is has rank $k$, and we next want to find all extensions of $\Gamma$ with $k+1$ new vertices.  In terms of the inclusion matrix between the final two depths we are looking for ways of adding a new row to a fixed matrix such that the entries aren't too large. These can be enumerated via an odometer process: increment the first entry of the row until the graph norm is too large, then reset the first entry and increment the second entry, and so on.  In order to further increase efficiency we may assume that the rows of the adjacency matrix between the top two depths are in lexicographic order, as permutations of the rows correspond to permutations of the new vertices, giving isomorphic graphs.


Again with a fixed limit $\Lambda$ on the Frobenius-Perron eigenvalue, we can enumerate all depth one extensions of a bigraph with dual data $\Gamma$ in much the same way. We first forget about dual data and extend the bigraph, then, if the new depth is even, for each extension we consider all possible involutions of the new vertices. We denote the resulting set $\bO_\Lambda(\Gamma)$.

Given a bigraph pair which is equal, we can enumerate all depth one extensions so that both bigraphs have Frobenius-Perron dimension at most $\Lambda$. 
When we are adding an odd depth, we are only interested in pairs where we add the same number of new vertices; in this case, because the duality involution between the new odd vertices is implicitly determined by the order in which they appear, we can insist the rows of the new inclusion matrix are in lexicographic order when extending one of the graphs, but not both. On the other hand, when we are adding an even depth, there may be different numbers of new vertices on the two graphs, and we can ask that both new inclusion matrices have rows in lexicographic order.
This process frequently results in duplicate bigraph pairs which are related by a nontrivial bigraph isomorphism (which may permute vertices at earlier depths as well), and for efficiency these should be removed.

Reusing notation, we denote by $\bO_\Lambda(W)$ the set of all depth one extensions  of a equal bigraph pair $W$. Alternatively, if $W$ has odd depth we can extend one bigraph but not the other, and we call the resulting set $\bL_\Lambda(W)$. When $W$ has an even depth we can't do this (because the numbers of new odd vertices on the new graphs must agree), so $\bL_\Lambda(W) = \eset$. 

The following `meta-theorem' explains how to use these enumeration techniques, which we collectively call `the odometer', to derive new classification statements from old ones.

\begin{thm}[The odometer]
\label{thm:odometer}
Suppose we have a classification statement $$(\Gamma_0, \Lambda, \cV, \{ W \} \cup \cW).$$ Then there is another classification statement $$(\Gamma_0, \Lambda, \cV \cup \{W\} \cup \mathbf{L}_\Lambda(W), \cW \cup \mathbf{O}_ \Lambda(W)).$$
\end{thm}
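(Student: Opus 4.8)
The plan is to unwind the definitions of ``classification statement'', ``translate'', and ``extension'', together with the defining properties of the enumeration sets $\mathbf{O}_\Lambda$ and $\mathbf{L}_\Lambda$, and to notice that the only substantive input needed is this: deleting vertices from a bigraph --- whether by truncating at a finite depth or by removing an initial $A$-tail and re-rooting --- passes to an induced subgraph, and so cannot increase the Frobenius--Perron dimension. Everything else is bookkeeping about parities of depths.

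First I would fix a subfactor $N\subset M$ satisfying the hypotheses of the target classification statement: its principal bigraph pair $\Gamma=(\Gamma^+,\Gamma^-)$ is not $A_\infty$, it starts like $\Gamma_0$, and $4<[M:N]<\Lambda$. Recall that each of $\Gamma^\pm$ has Frobenius--Perron dimension equal to $[M:N]<\Lambda$. Applying the given classification statement $(\Gamma_0,\Lambda,\cV,\{W\}\cup\cW)$, either $\Gamma$ is a translate of a vine in $\cV$, and then it is a translate of an element of the enlarged vine set $\cV\cup\{W\}\cup\mathbf{L}_\Lambda(W)$, so we are done; or $\Gamma$ is a translate of an extension of a weed $W'\in\{W\}\cup\cW$, and if $W'\in\cW$ we are again done because $\cW\subseteq\cW\cup\mathbf{O}_\Lambda(W)$. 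So the entire content lies in the remaining case $W'=W$.

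In that case write $\Gamma=E^{\uparrow k}$, where $k\ge 0$ is even and $E$ is an extension of $W$; each $E^\pm$ is, up to re-rooting, the induced subgraph of $\Gamma^\pm$ on the vertices of depth at least $k$, so both bigraphs of $E$ still have Frobenius--Perron dimension $<\Lambda$, and since translates compose it suffices to show that $E$ is a translate of an element of $\{W\}\cup\mathbf{L}_\Lambda(W)$ or a translate of an extension of an element of $\mathbf{O}_\Lambda(W)$. Let $d$ be the (common) depth of $W$ and split according to the depth of $E$, which is necessarily at least $d$. If $E$ has depth $d$ then $E=W$, which is a translate of $W\in\{W\}\cup\mathbf{L}_\Lambda(W)$. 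If $E$ has depth $d+1$ it is a depth-one extension of $W$ of bounded Frobenius--Perron dimension, hence, up to isomorphism, it lies in $\mathbf{O}_\Lambda(W)$ when it is an \emph{equal} pair and in $\mathbf{L}_\Lambda(W)$ when it is \emph{unequal}; the unequal case forces $d+1$ to be even (the longer graph of a bigraph pair has even depth), i.e.\ $d$ odd, which is exactly the situation in which $\mathbf{L}_\Lambda(W)$ is defined. Finally, if $E$ has depth $\ge d+2$ (or is infinite depth), then both of its bigraphs reach depth $d+1$, so truncating $E$ at depth $d+1$ gives an equal depth-one extension $E'$ of $W$ of bounded Frobenius--Perron dimension, hence $E'\in\mathbf{O}_\Lambda(W)$ up to isomorphism; and $E$ is an extension of $E'$, so $E$ --- and therefore $\Gamma$ --- is a translate of an extension of a weed in $\mathbf{O}_\Lambda(W)$. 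This exhausts all cases.

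The argument carries no analytic content beyond the induced-subgraph observation, so the step I expect to require the most care is precisely the case analysis and its matching against the construction of $\mathbf{O}_\Lambda(W)$ and $\mathbf{L}_\Lambda(W)$: one must be sure that these sets, as produced by the odometer, really do contain a representative of \emph{every} isomorphism class of depth-one extension of $W$ (equal ones in $\mathbf{O}_\Lambda$, and, when $W$ has odd depth, the one-sided ones in $\mathbf{L}_\Lambda$) with Frobenius--Perron dimension at most $\Lambda$ --- in particular that the lexicographic-ordering and isomorphism-reduction shortcuts used to speed up the enumeration discard nothing --- and that the conventions for adjoining an odd versus an even depth respect the constraint that the two depths of a bigraph pair differ by at most one with the longer one even. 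Granting those facts, which is what the definitions of $\mathbf{O}_\Lambda$ and $\mathbf{L}_\Lambda$ were set up to guarantee, there is no further obstacle.
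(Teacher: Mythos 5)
Your proposal is correct, and it is simply the careful unpacking of what the paper compresses into the single line ``this follows immediately from the definitions'': the only nontrivial observation is Perron--Frobenius monotonicity under passing to induced subgraphs (so truncation and un-translation preserve the norm bound), after which the case analysis on the depth of the de-translated extension of $W$ matches each case against $\{W\}$, $\mathbf{L}_\Lambda(W)$, or $\mathbf{O}_\Lambda(W)$ exactly as you describe. The parity bookkeeping (why unequal depth-one extensions only arise when $W$ has odd depth, so that $\mathbf{L}_\Lambda(W)$ is nonempty precisely when needed) is handled correctly, and your closing caveat about the enumeration being exhaustive up to isomorphism is exactly what the definitions of $\mathbf{O}_\Lambda$ and $\mathbf{L}_\Lambda$ are designed to guarantee.
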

\begin{proof}
This follows immediately from the definitions.
\end{proof}

Note that by repeatedly applying Theorem \ref{thm:odometer} we can arbitrarily increase the minimum depth of the weeds in a classification statement, at the expense of dramatically increasing the number of vines and weeds.  When we say that we ``run the odometer" what we mean is that we apply Theorem \ref{thm:odometer}, remove all weeds and vines which do not pass the associativity test (explained in the next section), and then repeat.  See Section \ref{sec:running} for details.

Next, we turn to another class of techniques for modifying classification statements: finding obstructions that can rule out entire families of bigraph pairs, coming from either vines or weeds.

\section{Obstructions}

Suppose that $\Gamma$ is a bigraph pair which starts like $\Gamma_0$.  In this section we give several obstructions to $\Gamma$ being the principal bigraph pair of a subfactor.  Furthermore, all of these obstructions are \emph{local} in the sense that they can be computed only from $\Gamma_0$.  Locality in this sense is crucial because it means that these tests can be applied to remove vines and weeds from classification statements.

\subsection{Associativity}
Recall that the principal graph gives the multiplicities for tensoring on the right with ${}_A B_B$ and ${}_B B_A$ between $A-A$ bimodules and $A-B$ bimodules.  The dual principal graph gives the multiplicities for tensoring on the left with ${}_A B_B$ and ${}_B B_A$ between $B-B$ bimodules and $B-A$ bimodules.  However, since taking duals interchanges the order of tensor product, using the dual data we can also recover from a bigraph pair the multiplicities for fusion on the left by the basic bimodules.  (The multiplicities for tensoring with the basic bimodules on both the left and right are often encoded, following Ocneanu, as a $4$-partite graph.  We find bigraph pairs easier to deal with combinatorially and far more compact to display.  However, one can easily go between the two descriptions.)

By associativity, we can first tensor on the left and then on the right, or we can first tensor on the right and then on the left.  This gives a combinatorial obstruction for potential principal graph pairs.  These associativity conditions were first observed by Ocneanu.  In the language of paragroups, associativity becomes the condition that biunitary matrices are square.  In the language of bigraph pairs, associativity becomes the following.

\begin{lem}
\label{lem:associativity}
Suppose that $(\Gamma, \Gamma')$ is the principal bigraph pair of a subfactor and that $\Gamma$ and $\Gamma'$ are simply laced.

Consider two vertices $V$ and $W$ of the same parity in the principal and dual principal graphs respectively.  The following two numbers are equal:
\begin{enumerate}
\item The number of vertices $Z$ in the principal graph which are adjacent to $V$, such that $Z^*$ is adjacent to $W^*$. 
\item The number of vertices $U$ in the dual principal graph which are adjacent to $W$ such that $U^*$ is adjacent to $V^*$. 
\end{enumerate}
\end{lem}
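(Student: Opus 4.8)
The plan is to pass to the bimodule picture and recognise both numbers in the lemma as computing a single multiplicity; the asserted equality is then just associativity of fusion together with Frobenius reciprocity.

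Write $A\subset B$ for the subfactor, $X={}_AB_B$ for the basic bimodule and $X^*={}_BB_A$ for its conjugate. Recall that the even, resp.\ odd, vertices of $\Gamma$ are the simple $A$-$A$, resp.\ $A$-$B$, bimodules occurring in the alternating tensor powers of $X$, with $\dim\operatorname{Hom}(P\otimes_A X, Q)$ edges from an even vertex $P$ to an odd vertex $Q$; $\Gamma'$ is described the same way using $B$-$B$ and $B$-$A$ bimodules and $X^*$. Under these identifications the operation $*$ on vertices is ``take the conjugate bimodule'': on an even vertex it stays within the same graph, and on an odd vertex it is precisely the stipulated bijection between the odd vertices of $\Gamma$ and of $\Gamma'$ (which sends an $A$-$B$ bimodule to its conjugate $B$-$A$ bimodule). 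I will use only the standard structure of the $2$-category of bimodules over the subfactor: semisimplicity with finite-dimensional $\operatorname{Hom}$ spaces, Frobenius reciprocity (with $X^*$ a two-sided dual of $X$, so e.g.\ $\operatorname{Hom}(M\otimes X, N)\cong\operatorname{Hom}(M, N\otimes X^*)$ and $\operatorname{Hom}(X^*\otimes M, N)\cong\operatorname{Hom}(M, X\otimes N)$), the fact that adjoints of intertwiners give $\dim\operatorname{Hom}(M,N)=\dim\operatorname{Hom}(N,M)$, and the conjugation anti-equivalence ($(M\otimes N)^*\cong N^*\otimes M^*$, $\dim\operatorname{Hom}(M,N)=\dim\operatorname{Hom}(M^*,N^*)$).

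Take first $V$ and $W$ at even depth, so $V$ is a simple $A$-$A$ bimodule and $W$ a simple $B$-$B$ bimodule. By Frobenius reciprocity the three quantities $\dim\operatorname{Hom}(V\otimes_A X,\,X\otimes_B W)$, $\dim\operatorname{Hom}(V,\,X\otimes_B W\otimes_B X^*)$, and $\dim\operatorname{Hom}(X^*\otimes_A V,\,W\otimes_B X^*)$ coincide, each being the multiplicity of $V$ in the $A$-$A$ bimodule $X\otimes_B W\otimes_B X^*$; the equality of the first and the third is exactly the associativity input. Expand the first over a complete set $\{Z\}$ of simple $A$-$B$ bimodules (only finitely many of which occur): it equals $\sum_Z\dim\operatorname{Hom}(V\otimes_A X, Z)\,\dim\operatorname{Hom}(X\otimes_B W, Z)$. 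The first factor is the number of edges $V$--$Z$ in $\Gamma$; using $(X\otimes_B W)^*\cong W^*\otimes_B X^*$ the second factor equals $\dim\operatorname{Hom}(W^*\otimes_B X^*, Z^*)$, the number of edges $W^*$--$Z^*$ in $\Gamma'$. Since both graphs are simply laced these factors lie in $\{0,1\}$, so the sum counts exactly those $Z$ adjacent to $V$ with $Z^*$ adjacent to $W^*$: this is number (1). Expanding the third quantity over the simple $B$-$A$ bimodules in the same way produces the factors $\dim\operatorname{Hom}(W\otimes_B X^*, U)$ (edges $W$--$U$ in $\Gamma'$) and $\dim\operatorname{Hom}(V^*\otimes_A X, U^*)$ (edges $V^*$--$U^*$ in $\Gamma$), so it equals number (2). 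The case of $V,W$ at odd depth is identical: there $V$ is $A$-$B$, $W$ is $B$-$A$, the relevant module is $X\otimes_B W\otimes_A X$ (which is $A$-$B$, matching $V$), and the two expansions --- over simple $A$-$A$ and over simple $B$-$B$ bimodules respectively --- read off as numbers (1) and (2), the required adjacencies now both taking place within $\Gamma$, resp.\ within $\Gamma'$.

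I expect no step here to be a serious obstacle: the mathematical content is associativity of fusion plus Frobenius reciprocity, which is Ocneanu's original observation. The care needed is bookkeeping --- tracking which of the four bimodule categories each object lies in, and remembering that $*$ on an even vertex is conjugation inside one graph while $*$ on an odd vertex is the bijection between the two graphs. The simply-laced hypothesis enters, and is genuinely used, only at the very end of each expansion: it is what lets us replace the weighted sums $\sum_Z\dim\operatorname{Hom}(\cdot,Z)\dim\operatorname{Hom}(\cdot,Z)$ by honest counts of vertices, so that the $\operatorname{Hom}$-dimensions equal numbers (1) and (2) on the nose rather than a multiplicity-weighted variant.
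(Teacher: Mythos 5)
Your proof is correct and follows essentially the same approach the paper intends (the paper itself gives no formal proof, only the preceding remark that the lemma expresses associativity of left and right tensoring by the basic bimodule, attributed to Ocneanu). You have correctly identified the common quantity $\dim\operatorname{Hom}(V\otimes_A X,\,X\otimes_B W)$ (in the even case, and its analogue in the odd case), expanded it in two different ways via Frobenius reciprocity, semisimplicity, and the conjugation anti-equivalence, and correctly tracked that $Z^*$ and $W^*$ end up living in $\Gamma'$ (even case) or $\Gamma$ (odd case), as required for the adjacency to make sense; your isolation of where the simply-laced hypothesis is needed (to turn a sum of products of multiplicities into an honest count) is also exactly right and matches the paper's remark about the non-simply-laced case.
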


\begin{rem}
If the graphs are not simply laced, the same theorem holds if you count the intermediate vertices with multiplicities corresponding to the product of the multiplicities of the two adjacencies.
\end{rem}



The obstruction given by the above theorem is local in the following sense.  If $V$ and $W$ are not both the largest depth nor both at the smallest depth, then the dimensions computed in either of these fashions are the same for a bigraph pair $\Gamma_0$ as they are for any bigraph pair which starts like $\Gamma_0$.  Furthermore, if $V$ and $W$ are both at the largest depth, then the dimensions computed in either of these fashions are the same for all translates of the given graph.

For example, it is easy to see using the associativity test that if a graph begins like $D_n$ then its dual graph must also begin like $D_n$.

While running the odometer, we can apply a trick which saves some time in checking the associativity obstruction. Recall that when we are adding an even depth, we construct two sets $\bL_\Lambda(W)$ (in which we have extended only one of the two graphs) and $\bO_\Lambda(W)$ (in which both graphs have been extended). It is easy to see that the extended graph in any unequal extension in $\bL_\Lambda(W)$ which passes the (global, in the sense of the paragraph of locality above) associativity test can not also pass the (local) associativity test with any other extended graph in a level extension in $\bO_\Lambda(W)$. 
Suppose $W=(W_1, W_2)$, and $W_1'$ is an extension of $W_1$, $W_2'$ is an extension of $W_2$. Thus if either $(W_1', W_2)$ or $(W_1,W_2')$ satisfies the global associativity test, we can immediately rule out the pair $(W_1',W_2')$ as an element of $\bO_\Lambda(W)$, because it cannot satisfies the local associativity test.



\subsection{The triple point obstruction}
\label{sec:triple-point-obstruction}
Versions of the following results are proved in \cite{MR1317352}, where they are attributed to Ocneanu.


\begin{lem}
Suppose $(\Gamma,\Gamma')$ is the principal bigraph pair of a subfactor of index $4$ or larger, and $V \in \Gamma$ and $V^* \in \Gamma'$ are a pair of dual triple points at an odd depth. Denote by $\cK$ the three neighbours of $V$ on $\Gamma$, and by $\cL$ the three neighbours of $V^*$ on $\Gamma'$. Consider $\phi :  \cK \to \cL$, a dimension preserving bijection. Then there exists $K \in \cK$ and $L \in \cL$ so $\phi(K) \neq L$, and an odd vertex $Z \neq V$ on $\Gamma$ so $Z$ is adjacent to $K$ and $Z^*$ is adjacent to $L$. 
\end{lem}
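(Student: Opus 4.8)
The plan is to use the associativity obstruction (Lemma~\ref{lem:associativity}) applied at the triple point $V$, together with the hypothesis that the index is at least $4$, to force the existence of the vertex $Z$. First I would set up the bookkeeping: let $\cK = \{K_1, K_2, K_3\}$ be the neighbours of $V$ on $\Gamma$ and $\cL = \{L_1, L_2, L_3\}$ the neighbours of $V^*$ on $\Gamma'$, and suppose for contradiction that for \emph{every} dimension-preserving bijection $\phi \colon \cK \to \cL$ and every $K \in \cK$, $L \in \cL$ with $\phi(K) \neq L$, there is no odd vertex $Z \neq V$ on $\Gamma$ with $Z \sim K$ and $Z^* \sim L$. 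Combined with what associativity \emph{does} produce, this should be contradictory.

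The key computation is to run the associativity test of Lemma~\ref{lem:associativity} with the pair of vertices $K$ (on $\Gamma$) and $\phi(K)^*$ or a suitable relative on $\Gamma'$ — more precisely, I would pick a neighbour $K \in \cK$ and count, on one side, the even vertices $Z$ adjacent to $K$ with $Z^*$ adjacent to an appropriate dual neighbour, and on the other side the even vertices adjacent to the dual neighbour whose duals are adjacent to $K^*$. One such vertex is always $V$ itself (since $K \sim V$ and, choosing the dual vertex to involve $L$ with $L \sim V^*$, also $V^* \sim L$); the point is that the index bound $[M:N] \geq 4$ forces $K$ to have valence at least $2$ — a vertex of valence $1$ on a principal graph of a subfactor corresponds to the $A_2$ or $A_3$ situation with index $< 4$ — so there must be a \emph{second} contribution, and tracing through what that second vertex can be, using the assumption that $V$ is a triple point (so the local structure near $V$ is constrained) and that the bijection $\phi$ preserves dimensions, yields a vertex $Z$ with the asserted adjacencies. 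The dimension-preserving hypothesis on $\phi$ is what lets us match up the Frobenius--Perron weights of the $K_i$ and $L_j$ so that the relevant associativity counts are actually comparable; without it the two sides of Lemma~\ref{lem:associativity} need not pair off the intermediate vertices correctly.

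I expect the main obstacle to be the careful case analysis forced by the freedom in choosing $\phi$ and by the possibility that two or three of the $K_i$ (or $L_j$) have equal dimension, so that several bijections $\phi$ must be handled simultaneously; one has to check that in \emph{every} such case the second associativity contribution really does land at a vertex off the triple point and with the claimed cross-adjacency, rather than, say, being accounted for by an edge multiplicity or by $V$ appearing with multiplicity. The index-$\geq 4$ hypothesis enters precisely to rule out the degenerate low-valence configurations where no second vertex is available; I would isolate that as a preliminary observation (any neighbour of a triple point at odd depth on a subfactor principal graph of index $\geq 4$ has valence $\geq 2$) before entering the main count, so that the associativity argument can then proceed uniformly. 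Finally, since the statement is about the \emph{existence} of $Z$ and $K, L$ rather than about all of them, it suffices to produce one valid triple, so once the associativity count delivers a second intermediate vertex for a single well-chosen $K$, the proof concludes.
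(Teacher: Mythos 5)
Your proposal rests on the associativity test (Lemma~\ref{lem:associativity}), but that test is purely combinatorial --- it counts edges and compares dimensions of intertwiner spaces --- and it cannot see the ``dimension preserving'' hypothesis on $\phi$ at all, since $\phi$ does not appear anywhere in the associativity count. Associativity is (in paragroup language) only the statement that the biunitary matrices are \emph{square}; the triple point obstruction is a consequence of their \emph{unitarity}. The paper's proof assumes no such $Z$ exists, writes down the $3 \times 3$ connection matrix indexed by $\cK \times \cL$ near the dual pair of triple points, observes that the absence of intermediate vertices off $V$ pins down the absolute values of the entries in terms of Frobenius--Perron dimensions (this is where the dimension-preservation of $\phi$ enters), and then shows by an explicit linear algebra argument that no unitary matrix with those absolute values exists unless the index is $< 4$ (this is Haagerup's Proposition 3.5). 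Your sketch replaces this with an associativity count plus a preliminary claim that index $\geq 4$ forces every neighbour of the triple point to have valence $\geq 2$. That preliminary claim is not used anywhere in the paper's argument, is not obviously true, and in any case would only yield some additional odd vertex adjacent to $K$; it would say nothing about whether that vertex's dual lands adjacent to a vertex $L$ with $\phi(K) \neq L$, which is the actual content of the conclusion. Moreover, your assertion that ``one such vertex is always $V$ itself'' in the associativity count is not automatic: for $V$ to contribute to the count for a pair $(K,L)$ you need $V^*$ adjacent to the \emph{dual} $L^*$, and $L^*$ need not lie in $\cL$. The missing idea is the biunitary connection; without it the argument does not close.
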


\begin{lem}
Suppose $(\Gamma,\Gamma')$ is the principal bigraph pair of a subfactor of index $4$ or larger, and $V \in \Gamma$ and $W \in \Gamma'$ are a pair of triple points at an even depth, such that the neighbours of $V$ coincide with the duals of the neighbours of $W^*$, and the duals of the neighbours of $V^*$ coincide with the neighbours of $W$. Denote by $\cK$ the three odd neighbours of $V$ on $\Gamma$, and by $\cL$ the three odd neighbours of $W$ on $\Gamma'$. Consider $\phi :  \cK \to \cL$, a dimension preserving bijection. Then there exists $K \in \cK$ and $L \in \cL$ so $\phi(K) \neq L$, and an even vertex $Z \neq V$ on $\Gamma$ so $Z$ is adjacent to $K$ and $Z^*$ is adjacent to $L^*$. 
\end{lem}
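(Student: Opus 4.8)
The plan is to pass to the bimodule category of the subfactor $A\subset B$. Write $X={}_A B_B$ for the basic bimodule and $X^*={}_B B_A$ for its conjugate, so that the even (respectively odd) vertices of $\Gamma$ are the irreducible $A$--$A$ (respectively $A$--$B$) bimodules, the even (respectively odd) vertices of $\Gamma'$ are the irreducible $B$--$B$ (respectively $B$--$A$) bimodules, the duality $*$ on odd vertices is bimodule conjugation, and an edge from an even vertex $Z$ to an odd vertex $K$ of $\Gamma$ records the multiplicity of $K$ in $Z\otimes X$. Under these identifications the two hypotheses on the neighbourhoods of $V,W,V^*,W^*$ amount to the single pair of bimodule isomorphisms $V\otimes X\cong X\otimes W$ and $X^*\otimes V\cong W\otimes X^*$; thus $\cK$ is the set of irreducible summands of $V\otimes X\cong X\otimes W$, $\cL$ is the set of summands of $X^*\otimes V\cong W\otimes X^*$, and comparing Frobenius--Perron dimensions gives $d_V=d_W$. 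In these terms the assertion to prove is: for every dimension preserving bijection $\phi\colon\cK\to\cL$ there are $K\in\cK$, $L\in\cL$ with $\phi(K)\neq L$ and an irreducible $A$--$A$ bimodule $Z\neq V$ appearing in both $K\otimes X^*$ and $X\otimes L$.

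Suppose not, for some fixed $\phi$. Each of the bimodules $K\otimes X^*$ ($K\in\cK$) and $X\otimes L$ ($L\in\cL$) contains $V$ with multiplicity one, so write $K\otimes X^*\cong V\oplus A_K$ and $X\otimes L\cong V\oplus B_L$; the contradiction hypothesis says exactly that $A_K$ and $B_L$ share no irreducible summand whenever $\phi(K)\neq L$. Since $\bigoplus_{L\in\cL}L\cong W\otimes X^*$ and $V\otimes X\cong X\otimes W$,
\[
\bigoplus_{K\in\cK}(V\oplus A_K)\;\cong\;V\otimes X\otimes X^*\;\cong\;X\otimes W\otimes X^*\;\cong\;\bigoplus_{L\in\cL}(V\oplus B_L),
\]
so $\bigoplus_{K\in\cK}A_K\cong\bigoplus_{L\in\cL}B_L$. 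Fix an irreducible $Z\neq V$ and let $m_K,n_L$ be its multiplicities in $A_K,B_L$. The contradiction hypothesis forces $\phi(K)=L$ whenever $m_K>0$ and $n_L>0$; as $\phi$ is injective, at most one $m_K$ and at most one $n_L$ are nonzero, so $\sum_K m_K=\sum_L n_L$ forces $m_K=n_{\phi(K)}$ for every $K$, with all other multiplicities zero. Letting $Z$ range over all irreducibles, this says $A_K\cong B_{\phi(K)}$, i.e.
\[
K\otimes X^*\;\cong\;X\otimes\phi(K)\qquad\text{for every }K\in\cK .
\]

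It remains to show that these three rigid isomorphisms, together with the fact that $V$ is a genuine triple point, are incompatible with the index being $\geq 4$ (equivalently $\delta\geq 2$); this is precisely Ocneanu's triple point obstruction of \cite{MR1317352}, and it is the heart of the matter, since everything above is valid at any index. The plan here is to feed the isomorphisms $K\otimes X^*\cong X\otimes\phi(K)$ into the biunitarity of Ocneanu's connection on the cells adjacent to $V$ and $W$ --- equivalently, into Jones's quadratic tangle relations: the isomorphisms force the connection to be ``$\phi$-diagonal'' on a $3\times 3$ block, and unitarity of that block, expanded via $X\otimes X^*\cong\mathbf 1\oplus Y$ with $Y$ of dimension one less than the index (so $\dim Y\geq 3$), yields a dimension inequality that fails once $\delta\geq 2$. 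Carrying out this last quantitative step cleanly --- in particular pinning down exactly where ``index $\geq4$'' and ``triple'' (rather than higher-valence) point are used --- is the main obstacle I foresee. Two routine supplements are also needed: for non-simply-laced $\Gamma,\Gamma'$ one decorates the multiplicity bookkeeping above with products of edge multiplicities, and one observes that the argument never refers to which of the three neighbours of $V$ lie at depth one less and which at depth one more, only to adjacency and fusion. The odd-depth companion lemma goes through by the same method, with the factor of $X$ moved to the opposite side.
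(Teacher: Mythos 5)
Your translation into bimodule language is correct, and so is the derivation (under the simply-laced caveat you note) that the contradiction hypothesis forces $K\otimes X^*\cong X\otimes\phi(K)$ for each $K\in\cK$. But this fusion-ring reduction is not where the content of the lemma lies, and it does not substitute for what you are missing. You flag the gap yourself --- ``Carrying out this last quantitative step cleanly \ldots is the main obstacle I foresee'' --- and that quantitative step \emph{is} the lemma, since, as you correctly observe, everything before it holds at any index.

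The paper's own proof goes straight to that step: assuming for contradiction that no such $K,L,Z$ exist, one considers the paragroup (6j-symbols) and obtains a $3\times 3$ biunitary connection matrix around $V$, $W$, $V^*$, $W^*$, each of whose entries has absolute value pinned down by dimensions; in particular the entry at $(K,L)$ with $\phi(K)\neq L$ is controlled precisely because the only contributing path runs through $V$. A short linear-algebra argument then shows such a constrained matrix cannot be unitary once the index is $\geq 4$; the paper delegates the details to Proposition 3.5 of \cite{MR1317352}. Your derived identities $K\otimes X^*\cong X\otimes\phi(K)$ are not the input to that computation --- it works directly from the non-existence of the common neighbour --- so establishing them is extra work that doesn't advance you past the obstacle. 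Be cautious, too, about the route through $X\otimes X^*\cong\mathbf{1}\oplus Y$ with $\dim Y=\delta^2-1$: that $Y$ lives at depth $2$, whereas $V$ and $W$ may sit at any even depth, and the relevant constraint is biunitarity on the local cell, not a global decomposition of $X\otimes X^*$. To finish the proof you need to actually carry out, or carefully quote, the $3\times 3$ biunitary estimate from Haagerup.
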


Both of these results are proved in the same way.  Assume for the sake of contradiction that the last condition (the existence of vertex adjacent to $K$ and $L$) does not hold.  By considering the paragroup (or equivalently the 6j-symbols) there is a $3$-by-$3$ unitary matrix each of whose matrix entries have specific absolute values.  A short argument in linear algebra then shows that the index must be less than $4$. See the proof of Proposition 3.5 from \cite{MR1317352} for more details.

In order for these lemmas to give us a local condition, we need to be able to determine which bijections are dimension preserving.  However, under most circumstances it is not possible to compute all the dimensions just from local information.  However, if the triple point in question is at depth $n$ for an $n$-supertransitive subfactor, and if the principal graphs are sufficiently simple through depth $n+2$, then it is possible to apply the triple point obstruction without knowing anything about the graph at higher depths.

First let us fix some terminology.  Suppose that $\cS$ is a finite set, and that $A_1$, $A_2$, $A'_1$, and $A'_2$ are subsets of $\cS$ with $\cS=A_1 \bigcup A_2 = A'_1 \bigcup A'_2$.  We call $(\cS, A_1, A_2, A'_1, A'_2)$ \emph{forbidden} if $A_1$ and $A_2$ are disjoint, $A'_1$ and $A'_2$ are disjoint and either $A_1 = A'_1$ and $A_2 = A'_2$, or $A_1 = A'_2$ and $A_2 = A'_1$.

\begin{cor}
\label{lem:even-triple-point-local}
Suppose that a bigraph pair $(\Gamma, \Gamma')$ has dual initial triple points at an even depth $n$.  Let $\alpha$ and $\beta$ be the vertices of $\Gamma$ at depth $n+1$.  Let $\cS$ be the set of vertices in the principal graph at depth $n+2$.  Let $A_1$ be the set of vertices at depth $n+2$ which are adjacent to $\alpha$ and let $A_2$ be the set of vertices at depth $n+2$ which are adjacent to $\beta$.  Let $A'_i$ be the set of duals of vertices in $A_i$.  If $(\cS, A_1,A_2,A'_1,A'_2)$ is forbidden, then $(\Gamma, \Gamma')$ is not the principal bigraph pair of a subfactor of index $4$ or more.
\end{cor}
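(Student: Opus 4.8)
The plan is to apply the above triple point obstruction at an even depth (the second Lemma of this subsection) to $(\Gamma,\Gamma')$, exploiting the fact that an \emph{initial} triple point makes the graphs completely rigid through depth $n+1$, so that both the hypothesis and the conclusion of that Lemma become computable from the data through depth $n+2$.

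First I would fix notation forced by supertransitivity. Since $\Gamma$ and $\Gamma'$ are $n$-supertransitive with a triple point at depth $n$, each has a single, self-dual vertex at every depth $0,\dots,n$, and exactly two vertices at depth $n+1$, namely the forward neighbours of the triple point. (Here $n\ge 2$: a principal graph is at least $1$-supertransitive, its root having degree one, and $n$ is even.) Write $V_{n-2},\gamma,V$ for the depth $n-2,n-1,n$ vertices of $\Gamma$, write $W$ for the depth $n$ vertex of $\Gamma'$, write $\alpha,\beta$ for the depth $n+1$ vertices of $\Gamma$, and label the depth $n+1$ vertices of $\Gamma'$ as $\alpha',\beta'$ so that the odd-duality bijection of the bigraph pair carries $\gamma\mapsto\gamma'$, $\alpha\mapsto\alpha'$, $\beta\mapsto\beta'$. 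The neighbour-compatibility hypotheses of the Lemma then hold automatically, since by self-duality and this choice of labels the duals of the neighbours of $V$ are exactly $\{\gamma,\alpha,\beta\}$, the neighbours of $W$. So the Lemma applies: for every dimension-preserving bijection $\phi:\{\gamma,\alpha,\beta\}\to\{\gamma',\alpha',\beta'\}$ there are $K,L$ with $\phi(K)\ne L$ and an even vertex $Z\ne V$ of $\Gamma$ with $Z$ adjacent to $K$ and $Z^*$ adjacent to $L^*$.

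The main step is to enumerate the possible witnesses $Z$. By supertransitivity the only even vertices of $\Gamma$ adjacent to a member of $\{\gamma,\alpha,\beta\}$ are $V_{n-2}$ (adjacent, among these, only to $\gamma$), $V$ (which is excluded), and the vertices of $\cS$ at depth $n+2$ (adjacent, among these, only to $\alpha$ and/or $\beta$); in particular $\cS=A_1\cup A_2=A_1'\cup A_2'$ automatically, so the content of ``forbidden'' is the disjointness of $A_1,A_2$ together with the duality pattern. If $Z=V_{n-2}$ then necessarily $K=\gamma$, and since $V_{n-2}=V_{n-2}^*$ is adjacent among $\{\gamma,\alpha,\beta\}$ only to $\gamma$ we must have $L^*=\gamma$, that is $L=\gamma'$; so this possibility requires $\phi(\gamma)\ne\gamma'$. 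If $Z\in\cS$ then $K\in\{\alpha,\beta\}$ and $L\in\{\alpha',\beta'\}$, and, writing $i=1,2$ according as $K=\alpha,\beta$ and $j=1,2$ according as $L=\alpha',\beta'$, the requirement on $Z$ is exactly $Z\in A_i\cap A_j'$, still with $\phi(K)\ne L$. Thus, once $\phi$ is chosen, the conclusion of the Lemma becomes a purely combinatorial assertion about $(\cS,A_1,A_2,A_1',A_2')$.

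Finally I would choose $\phi$ to defeat that assertion. Because $\dim x=\dim x^*$ for every vertex, the identity bijection $\gamma\mapsto\gamma',\alpha\mapsto\alpha',\beta\mapsto\beta'$ is always dimension-preserving; and in the forbidden case $A_1=A_2',\,A_2=A_1'$, a dimension count at $\alpha$ and $\beta$ (using the relation $\delta\dim x=\sum_{y\sim x}\dim y$ with $\delta^2$ the index, together with $\sum_{Z\in A_1}\dim Z=\sum_{Z\in A_2'}\dim Z=\sum_{Z\in A_2}\dim Z$) forces $\dim\alpha=\dim\beta$, so the swap $\gamma\mapsto\gamma',\alpha\mapsto\beta',\beta\mapsto\alpha'$ is also dimension-preserving. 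Now if $A_1=A_1',\,A_2=A_2'$, take $\phi$ to be the identity: then $\phi(\gamma)=\gamma'$ rules out $Z=V_{n-2}$, and for $Z\in\cS$ we would need $Z\in A_i\cap A_j$ with $i\ne j$, impossible as $A_1\cap A_2=\eset$. If instead $A_1=A_2',\,A_2=A_1'$, take $\phi$ to be the swap: again $Z=V_{n-2}$ is impossible, and for $Z\in\cS$ we would need $Z\in A_i\cap A_i'=A_1\cap A_2=\eset$. In each case no witness $Z$ exists, contradicting the Lemma, so $(\Gamma,\Gamma')$ is not the principal bigraph pair of a subfactor of index $4$ or more. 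I expect the main obstacle to be precisely this middle bookkeeping: matching the abstract sets $A_1,A_2,A_1',A_2'$ and the two branches of ``forbidden'' to the concrete adjacency and duality data around the triple point, and in particular noticing that the ``swap'' branch is exactly the regime in which the swap bijection is dimension-preserving, hence a legal input to the Lemma.
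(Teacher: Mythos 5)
Your proof is correct and takes essentially the same approach as the paper's: apply the even-depth triple point lemma with a dimension-preserving bijection chosen according to which branch of ``forbidden'' holds (identity or swap), and check by hand that no witness $Z$ can exist. The paper compresses the whole argument into a one-sentence remark that vertices with the same neighbours have the same dimension; your Frobenius--Perron dimension count showing $\dim\alpha=\dim\beta$ in the swap case, together with $\dim x=\dim x^*$ for the identity case, is exactly the precise form of what that remark is gesturing at.
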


\begin{cor}
\label{lem:odd-triple-point-local}
Suppose that a bigraph pair $(\Gamma, \Gamma')$ has dual initial triple points at an odd depth $n$.  Let $\alpha$ and $\beta$ be the two vertices of $\Gamma$ at depth $n+1$ and $\alpha'$, $\beta'$ the two vertices of $\Gamma'$ at depth $n+1$.   Let $\cS$ be the set of vertices of $\Gamma$ at depth $n+2$.  Let $A_1$ be the set of vertices at depth $n+2$ which are adjacent to $\alpha$ and let $A_2$ be the set of vertices at depth $n+2$ which are adjacent to $\beta$.  Let $A'_1$ and $A'_2$ be the duals of the vertices at depth $n+2$ which are adjacent to $\alpha'$ and $\beta'$ respectively.  If $(\cS, A_1,A_2,A'_1,A'_2)$ is forbidden, then $(\Gamma,\Gamma')$ is not the principal bigraph pair of a subfactor of index $4$ or more.
\end{cor}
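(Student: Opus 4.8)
The plan is to deduce this from the triple point obstruction for dual triple points at an odd depth (the first Lemma of \S\ref{sec:triple-point-obstruction}), applied to a specific, explicitly chosen dimension-preserving bijection. Suppose for contradiction that $(\Gamma,\Gamma')$ is the principal bigraph pair of a subfactor of index at least $4$ and that $(\cS,A_1,A_2,A'_1,A'_2)$ is forbidden. Write $V\in\Gamma$ and $V^*\in\Gamma'$ for the dual initial triple points at depth $n$, let $\gamma\in\Gamma$ and $\gamma'\in\Gamma'$ be the unique vertices at depth $n-1$, and put $\cK=\{\gamma,\alpha,\beta\}$ and $\cL=\{\gamma',\alpha',\beta'\}$, the neighbours of $V$ in $\Gamma$ and of $V^*$ in $\Gamma'$ respectively. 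Since $n$ is odd, so is $n+2$, so the duality between odd-depth vertices of the bigraph pair identifies $\cS$ with the depth-$(n+2)$ vertices of $\Gamma'$; this is the identification implicit in the definitions of $A'_1$ and $A'_2$, and --- because we are assuming a genuine subfactor --- it is dimension preserving: $\dim Z=\dim Z^*$ for every $Z\in\cS$. (If $\cS=\eset$ then $\Gamma$ is the Dynkin diagram $D_{n+3}$, which has norm $<2$, and there is nothing to prove, so one may assume $\cS\neq\eset$.)

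First I would pin down the bijection. Because $\Gamma$ and $\Gamma'$ coincide with $A_n$ through depth $n$, the Frobenius--Perron eigenvector equations determine all dimensions through depth $n$; in particular $\dim\gamma=\dim\gamma'$ and $\dim V=\dim V^*$. Writing $\delta$ for the common Frobenius--Perron eigenvalue, the eigenvector equation at $\alpha$ --- whose neighbours are exactly $V$ and the vertices of $A_1$ --- reads $\delta\dim\alpha=\dim V+\sum_{Z\in A_1}\dim Z$, and similarly at $\beta$ with $A_2$; carrying out the same computation on $\Gamma'$ at $\alpha'$ and $\beta'$ and then rewriting the sums over neighbours in $\Gamma'$ as sums over $A'_1$ and $A'_2$ by means of the dimension-preserving identification yields $\delta\dim\alpha'=\dim V^*+\sum_{Z\in A'_1}\dim Z$ and $\delta\dim\beta'=\dim V^*+\sum_{Z\in A'_2}\dim Z$. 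Since $(\cS,A_1,A_2,A'_1,A'_2)$ is forbidden, either $A_1=A'_1,\,A_2=A'_2$ --- in which case the above forces $\dim\alpha=\dim\alpha'$ and $\dim\beta=\dim\beta'$, so the ``straight'' bijection $\phi\colon\gamma\mapsto\gamma',\ \alpha\mapsto\alpha',\ \beta\mapsto\beta'$ is dimension preserving --- or $A_1=A'_2,\,A_2=A'_1$, in which case the ``crossed'' bijection $\phi\colon\gamma\mapsto\gamma',\ \alpha\mapsto\beta',\ \beta\mapsto\alpha'$ is dimension preserving. I take the $\phi$ belonging to whichever case holds.

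Now apply the Lemma to this $\phi$: it produces $K\in\cK$, $L\in\cL$ with $\phi(K)\neq L$, and an odd vertex $Z\neq V$ adjacent to $K$ with $Z^*$ adjacent to $L$. The odd vertices adjacent to a member of $\cK$ other than $V$ itself are only the unique depth-$(n-2)$ vertex (present when $n\ge3$), adjacent to $\gamma$, and the elements of $A_1$ and $A_2$, adjacent to $\alpha$ and $\beta$. The dual of the depth-$(n-2)$ vertex is adjacent, among $\cL$, only to $\gamma'$, whereas the dual of an element of $A_1\cup A_2$ sits at depth $n+2$ and so is adjacent, among $\cL$, only to $\alpha'$ or $\beta'$: to $\alpha'$ exactly when the element lies in $A'_1$, and to $\beta'$ exactly when it lies in $A'_2$. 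Since $\phi(\gamma)=\gamma'$, no valid witness can have $K=\gamma$ or $L=\gamma'$; hence $K\in\{\alpha,\beta\}$, $L\in\{\alpha',\beta'\}$, $\phi(K)\neq L$, and $Z\in A_1\cup A_2$. Running through the four such $(K,L)$: in the straight case a valid $Z$ must lie in $(A_1\cap A'_2)\cup(A_2\cap A'_1)$, and in the crossed case in $(A_1\cap A'_1)\cup(A_2\cap A'_2)$ --- but in each case the matching relation $\{A_1,A_2\}=\{A'_1,A'_2\}$ together with the disjointness of $A_1$ and $A_2$ (both part of being forbidden) makes every one of these four intersections equal to $A_1\cap A_2=\eset$. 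So no such $Z$ exists, contradicting the Lemma; therefore $(\Gamma,\Gamma')$ is not the principal bigraph pair of a subfactor of index $4$ or more.

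The only delicate point is the local dimension bookkeeping in the middle step: the Lemma supplies information only once we know the bijection handed to it is dimension preserving, and establishing that from the graph through depth $n+2$ alone is exactly what forces the combination of the rigidity of the $A_n$ segment (which pins down $\dim\gamma$, $\dim\gamma'$, $\dim V$, $\dim V^*$) with the dimension-preserving identification of the depth-$(n+2)$ odd vertices of $\Gamma$ and $\Gamma'$; this is also what dictates that exactly the ``matching'' bijection --- straight or crossed, according to which alternative in the definition of forbidden occurs --- is dimension preserving, and it is that bijection which defeats the Lemma. Everything else is the short enumeration above of the few odd vertices adjacent to the neighbours of an initial triple point. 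The proof of Corollary~\ref{lem:even-triple-point-local} is entirely parallel, with all depths lowered by one and with the even-vertex duality involutions of $\Gamma$ and $\Gamma'$ in the role played here by the odd-vertex duality.
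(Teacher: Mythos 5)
Your proof is correct and follows the same approach as the paper's, which compresses the argument into a single line (``follows quickly from the above by using the fact that if two vertices have the same set of adjacent vertices, then they must have the same dimension''). You have simply unpacked that remark: the Frobenius--Perron eigenvector equations at $\alpha,\beta,\alpha',\beta'$, together with the dimension-preserving duality on odd vertices, show that the ``matching'' bijection --- straight or crossed according to which alternative in the definition of \emph{forbidden} holds --- is dimension preserving; and the short enumeration of odd vertices adjacent to $\cK$ (only the depth-$(n-2)$ vertex, which is ruled out since $\phi(\gamma)=\gamma'$, and the elements of $A_1\cup A_2$, whose contributions all reduce to $A_1\cap A_2=\eset$) shows no witness $Z$ can exist, contradicting the odd-depth triple-point lemma.
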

\begin{proof}
Both of these results follow quickly from the above by using the fact that if two vertices have the same set of adjacent vertices, then they must have the same dimension.
\end{proof}

\subsection{Duals at depth $n+1$}
\label{sec:duals}
Suppose that $\Gamma$ is an $n$-supertransitive bigraph with duals for $n$ odd.  Suppose that $a$ of the vertices at depth $n+1$ are self-dual and that $2b$ of them are not self-dual.  Considering the $180$-degree rotation acting on the quotient of $n+1$-box space by the ideal of elements which factor through the $n-1$-box space, we see that the dimension of the $-1$-eigenspace is $b$ and the dimension of the $+1$-eigenspace is $a+b$.

\begin{lem}
\label{lem:duals}
Suppose that $(\Gamma, \Gamma')$ is the principal bigraph pair of a subfactor, and that $a, b, a', b'$ are defined as above.  Then $a=a'$ and $b=b'$.
\end{lem}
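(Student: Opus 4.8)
The plan is to identify the quantities $a+b$ and $b$ on the two graphs with the dimensions of the $\pm 1$-eigenspaces of the rotation acting on a space that is \emph{intrinsic} to the subfactor planar algebra, hence does not depend on whether we use $\Gamma$ or $\Gamma'$ to compute it. The relevant space is the quotient $P_{n+1}/(\text{image of } P_{n-1})$, i.e.\ the space of ``new'' elements at box-number $n+1$; since the subfactor is $n$-supertransitive this quotient has a basis indexed by the depth-$(n+1)$ vertices of $\Gamma$, and a second basis indexed by the depth-$(n+1)$ vertices of $\Gamma'$ (the minimal projections in the two towers). The $180$-degree rotation $\rho$ is a linear map of this finite-dimensional space, and the discussion preceding the lemma computes that, read off $\Gamma$, its $+1$-eigenspace has dimension $a+b$ and its $-1$-eigenspace has dimension $b$, where $a$ counts self-dual depth-$(n+1)$ vertices of $\Gamma$ and $2b$ counts the non-self-dual ones. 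Reading off $\Gamma'$ instead gives $a'+b'$ and $b'$ by exactly the same argument.

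First I would make precise that $\rho$ is a well-defined linear endomorphism of this quotient: rotation preserves box-number, and because the ideal generated by $P_{n-1}$ is rotation-invariant, $\rho$ descends. Second, I would recall why $\rho$ restricted to the span of the new minimal projections acts as a permutation-with-signs governed by the duality involution: a new minimal projection at box-number $n+1$ corresponds to an irreducible $A$–$A$ (or $B$–$B$) bimodule appearing for the first time, rotation by $180^\circ$ sends it (up to scalar) to the projection for the contragredient bimodule, and a careful sign/normalization computation — the content of ``considering the rotation acting on the quotient'' in the paragraph above — shows the scalar is $+1$ on each self-dual projection and that each dual pair contributes one $+1$ and one $-1$ eigenvector. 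This yields the eigenspace dimensions $(a+b, b)$ from the $\Gamma$-side bookkeeping. Third, the identical computation carried out with the minimal projections in the dual tower (equivalently, using $\Gamma'$ and its duality involution) yields $(a'+b', b')$. Since both computations describe the eigenspace dimensions of the \emph{same} operator on the \emph{same} vector space, we get $a+b = a'+b'$ and $b = b'$, whence also $a = a'$.

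The main obstacle I expect is the sign/normalization bookkeeping in the second step: one must check that the rotation really acts with eigenvalue exactly $+1$ (not merely a root of unity) on self-dual new projections and pairs up the non-self-dual ones into a single $2$-dimensional block with eigenvalues $+1$ and $-1$. This is where the ``quotient by the ideal from $P_{n-1}$'' is essential — on the full box space the rotation can act by other roots of unity, but on the new part at depth $n+1$ for an $n$-supertransitive planar algebra the only box below is $P_{n-1}$, so the rotation squared acts as the identity on the quotient and its eigenvalues are confined to $\{\pm 1\}$; the self-dual/non-self-dual dichotomy then determines the multiplicities exactly as stated. Once that linear-algebra fact is in hand, the proof is just the observation that the operator and its ambient space are subfactor invariants, independent of the choice of principal versus dual principal graph.
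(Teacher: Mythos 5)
Your computation of eigenspace dimensions $(a+b,b)$ from the duality data on one side is fine and matches the paragraph preceding the lemma. But the proposal has a genuine gap in the comparison step. You assert that the depth-$(n+1)$ vertices of $\Gamma$ and the depth-$(n+1)$ vertices of $\Gamma'$ give two bases for \emph{the same} quotient space, so that the $180$-degree rotation is ``the same operator on the same space'' computed two ways. That is not the case. In a subfactor planar algebra the $(n+1)$-box space comes in two flavors, shaded and unshaded (equivalently, $N'\cap M_n$ versus $M'\cap M_{n+1}$, or $N$--$N$ versus $M$--$M$ bimodule endomorphisms): the minimal central projections of the shaded one are indexed by depth-$(n+1)$ vertices of $\Gamma$, and the minimal central projections of the unshaded one by depth-$(n+1)$ vertices of $\Gamma'$. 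These are genuinely distinct vector spaces, each carrying its own $180$-degree rotation, so a priori there is no reason the two eigenspace counts should agree.

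The missing ingredient — and the entire content of the paper's one-sentence proof — is the \emph{half-click rotation}. It is a canonical isomorphism from the shaded to the unshaded $(n+1)$-box space (it shifts every string by half a position and hence flips the shading of the leftmost region), it descends to the quotients by the ideals from the $(n-1)$-box spaces, and it intertwines the two $180$-degree rotations. Only after invoking this intertwiner can you conclude that the corresponding eigenspaces are isomorphic, giving $b=b'$ and $a+b=a'+b'$. Your step two and step three compute the eigenvalue data correctly on each side separately, but the bridge between the two sides is simply asserted rather than supplied, and as written (``same operator on the same space'') it is false. To repair the argument you should drop the claim that there is a single ambient space and instead produce the half-click intertwiner between the shaded and unshaded quotients.
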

\begin{proof}
Because the half-click rotation gives isomorphisms between the corresponding eigenspaces of the shaded and unshaded $n+1$ box space, we must have $b=b'$ and $a+b=a'+b'$, and hence $(a,b)=(a',b')$.
\end{proof}

\subsection{Even quadruple points}

The following obstruction to quadruple points is proved by Jones using quadratic tangles techniques.  A more sophisticated version of this argument will rule out $\cQ$ entirely.

\begin{thm} \label{thm:evenquad}
The principal graph pair of a subfactor does not begin like $$\left(\bigraph{bwd1v1v1p1p1v1x0x0duals1v1v1}, \bigraph{bwd1v1v1p1p1v1x0x0duals1v1v1}\right).$$
\end{thm}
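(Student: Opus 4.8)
The plan is to argue entirely inside the subfactor planar algebra $P$, using Jones's quadratic tangle machinery. After a translation we may assume the quadruple point sits at the even depth $n$, so that $P_k$ agrees with Temperley--Lieb for $k\le n$ and $P_n$ contains exactly one new minimal projection $p$, the projection onto the bimodule at the quadruple point. Writing $\alpha_1,\alpha_2,\alpha_3$ for its three neighbours at depth $n+1$, and noting that the Temperley--Lieb part supplies only one vertex there, the relevant new subspace $V\subseteq P_{n+1}$ --- the orthogonal complement of the annular consequences of $P_{n-1}$ --- is two-dimensional, one dimension for each neighbour of the quadruple point beyond the Temperley--Lieb strand. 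This is exactly the low-weight space on which the quadratic tangle structure lives.

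First I would record the structure carried by $V$: the normalised trace form, whose values on the minimal projections $p_{\alpha_i}$ recover the Frobenius--Perron weights $\dim\alpha_i$ up to a fixed power of $\delta$; the multiplication inherited from $P_{n+1}$ after killing the annular ideal; and the rotation operator $\rho$, which has finite order, preserves the trace form, and so splits $V$ into eigenlines with root-of-unity eigenvalues. The essential point is that every number that will enter the argument is determined by the graph pair out to depth $n+2$ alone. Supertransitivity fixes $\dim p$ and the weight of the parent of the quadruple point as Chebyshev values in $\delta$; the eigenvector equation $\delta\,\dim v=\sum_{w\sim v}\dim w$ then fixes $\sum_i\dim\alpha_i$, and once one imposes that exactly one of the $\alpha_i$ carries a single descendant at depth $n+2$ --- and that the dual principal graph starts the same way --- every individual $\dim\alpha_i$, and the weight of that descendant, becomes an explicit function of $\delta$, while the dual data likewise constrains the rotational phase. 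Thus ``begins like'' the displayed pair is precisely the hypothesis that reduces all local data to a single unknown $\delta$ together with a root of unity.

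Next I would invoke Jones's quadratic tangle identity: for a suitable self-adjoint generator $S\in V$ (for instance a normalisation of $p_{\alpha_2}-p_{\alpha_3}$), the element obtained by inserting two copies of $S$ into the quadratic tangle can be re-expanded in the low-dimensional span of $1$, $S$, $\rho(S)$ and their products, and the coefficients of this expansion are exactly the graph-theoretic quantities above. Substituting the values forced by the hypothesis turns the identity into an explicit finite system of polynomial equations in $\delta^2$ and the rotational phase. The contradiction then drops out of this system: the asymmetry of having one and only one of $\alpha_1,\alpha_2,\alpha_3$ carry a descendant is incompatible with the symmetry the quadratic tangle imposes on $V$ --- concretely, a quantity that must be an honest squared norm ($\ge 0$) or an honest bimodule dimension ($>0$) is forced to be negative, equivalently the computation forces the index $\delta^2$ below $4$, which is impossible since a valence-$4$ vertex already makes the graph norm at least $2$.

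The main obstacle is the honest evaluation of the quadratic tangle structure constants: one must pin down the correct tangle, keep careful track of shadings and of the precise period and phase of $\rho$, and express each closed-up diagram in terms of $\dim p$, the $\dim\alpha_i$ and $\delta$ with the right normalisation --- this is where Jones's machinery does the real work and where stray factors of $\delta$ are easy to introduce. A secondary point to verify is robustness: the computation uses only data at depths $\le n+2$ and is phrased for normalised generators, so it is unaffected by translation, and since ``begins like'' forces agreement with the displayed pair through depth $n+2$ it is unaffected by any extension at greater depths; hence ruling out the displayed pair rules out every subfactor that begins like it.
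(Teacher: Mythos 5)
Your proposal takes essentially the same approach as the paper: the paper's proof of this theorem is a bare citation to \cite[Theorem 5.2.2]{quadratic}, and your sketch correctly outlines the quadratic-tangles argument from that reference (isolate the two-dimensional low-weight space in $P_{n+1}$, express the rotation and trace data in terms of local graph data, and derive a contradiction from Jones's quadratic tangle identity). One small slip: since $P_k = TL_k$ for $k \le n$, the projection at the quadruple point is the Jones--Wenzl idempotent $f_n$ rather than a ``new'' minimal projection --- the genuinely new projections first appear in $P_{n+1}$, which is where your space $V$ lives --- but this does not affect the rest of the outline.
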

\begin{proof}
\cite[Theorem 5.2.2.]{quadratic}
\end{proof}

\section{Running the odometer}
\label{sec:running}
By running the odometer on a classification statement we mean repeating the following two procedures:
\begin{enumerate}
\item Apply Theorem \ref{thm:odometer} to the classification statement, extending all the weeds by one depth.
\item Apply the associativity test and remove all vines and all weeds which fail it.
\end{enumerate}

We have written a package of computer programs which automates the odometer, and we describe its use at the end of this section.  It is easy (but very tedious) to check the output of this program by hand.  We've included figures which summarize the output of the odometer as it runs.  These figures are trees, with the initial graph on the left, and each successive equal shows the new weeds that arise that pass the associativity tests.  See below for more details.

The example that we consider reproduces a small part of Haagerup's classification of subfactors out to index $3+\sqrt{3}$ by finding a complete set of vines for principal graph pairs starting like
$$(\Gamma, \Gamma') = \left(\bigraph{bwd1v1v1v1p1v1x0p0x1v1x0p0x1p0x1duals1v1v1x2v2x1x3}, \bigraph{bwd1v1v1v1p1v0x1p0x1v0x1duals1v1v1x2v1}\right)$$
with index less that $3+\sqrt{3}$.

We now ``run the odometer" beginning with the tautological classification statement $\left((\Gamma,\Gamma'), 3+\sqrt{3},\emptyset,\{(\Gamma,\Gamma')\}\right)$.

We first enumerate all the depth $1$ extensions of $\Gamma$ and $\Gamma'$ with norms less than $3+\sqrt{3}$.  Using the notation of \S \ref{sec:odometer}, we have
\begin{align*}
\mathbf{O}_{3+\sqrt{3}}(\Gamma) =\Big\{ 
& \bigraph{bwd1v1v1v1p1v1x0p0x1v1x0p0x1p0x1v0x0x1duals1v1v1x2v2x1x3}, \bigraph{bwd1v1v1v1p1v1x0p0x1v1x0p0x1p0x1v0x1x0duals1v1v1x2v2x1x3}, \\ 
& \bigraph{bwd1v1v1v1p1v1x0p0x1v1x0p0x1p0x1v1x0x0duals1v1v1x2v2x1x3}, 
\bigraph{bwd1v1v1v1p1v1x0p0x1v1x0p0x1p0x1v0x0x1p0x0x1duals1v1v1x2v2x1x3}, \\
& \bigraph{bwd1v1v1v1p1v1x0p0x1v1x0p0x1p0x1v0x1x0p0x0x1duals1v1v1x2v2x1x3}, \bigraph{bwd1v1v1v1p1v1x0p0x1v1x0p0x1p0x1v0x1x0p0x1x0duals1v1v1x2v2x1x3}, \\
&\bigraph{bwd1v1v1v1p1v1x0p0x1v1x0p0x1p0x1v1x0x0p0x0x1duals1v1v1x2v2x1x3}, \bigraph{bwd1v1v1v1p1v1x0p0x1v1x0p0x1p0x1v1x0x0p0x1x0duals1v1v1x2v2x1x3}, \\
&\bigraph{bwd1v1v1v1p1v1x0p0x1v1x0p0x1p0x1v1x0x0p1x0x0duals1v1v1x2v2x1x3}, \bigraph{bwd1v1v1v1p1v1x0p0x1v1x0p0x1p0x1v1x0x0p0x1x0p0x0x1duals1v1v1x2v2x1x3}, \\
& \bigraph{bwd1v1v1v1p1v1x0p0x1v1x0p0x1p0x1v1x0x0p1x0x0p0x0x1duals1v1v1x2v2x1x3}, \bigraph{bwd1v1v1v1p1v1x0p0x1v1x0p0x1p0x1v1x0x0p1x0x0p0x1x0duals1v1v1x2v2x1x3},\\
& \bigraph{bwd1v1v1v1p1v1x0p0x1v1x0p0x1p0x1v1x0x0p1x0x0p0x1x0p0x0x1duals1v1v1x2v2x1x3} \Big\}\\
\mathbf{O}_{3+\sqrt{3}}(\Gamma') = \big\{ 
&\bigraph{bwd1v1v1v1p1v0x1p0x1v0x1v1duals1v1v1x2v1}, \bigraph{bwd1v1v1v1p1v0x1p0x1v0x1v1p1duals1v1v1x2v1}\Big\}
\end{align*}
Now, for the depth one extensions of the bigraph pair $(\Gamma, \Gamma')$, we take one graph from each list, subject to the condition that the two graphs have the same number of vertices at the new (odd) depth.  Since the depth we're adding is an odd depth, there is no need to choose an involution of the new vertices specifying dual data. On the other hand, it is important that in at least one of the lists 
$\mathbf{O}_{3+\sqrt{3}}(\Gamma)$ and $\mathbf{O}_{3+\sqrt{3}}(\Gamma')$ we include as distinct elements graphs which differ by a permutation of the vertices at the new depth,  since the duality for odd vertices is given by their vertical ordering. In the lists above, it is convenient to remove such redundancies in the first list, but not in the second, as there are no redundancies there anyway.

Next, we apply the associativity test of Lemma \ref{lem:associativity}, remembering to only use the local version where at least one of the vertices $V$ and $W$ are not at the newly introduced depth and at least one is not the root vertex. This cuts down the previous large list to just two bigraph pairs,
\begin{align*}
\cW_1 = \Big\{ & \left(\bigraph{bwd1v1v1v1p1v1x0p0x1v1x0p0x1p0x1v1x0x0duals1v1v1x2v2x1x3}, \bigraph{bwd1v1v1v1p1v0x1p0x1v0x1v1duals1v1v1x2v1}\right), \\
    & \left(\bigraph{bwd1v1v1v1p1v1x0p0x1v1x0p0x1p0x1v1x0x0p0x0x1duals1v1v1x2v2x1x3}, \bigraph{bwd1v1v1v1p1v0x1p0x1v0x1v1p1duals1v1v1x2v1}\right) \Big\}.
\end{align*}

As an example of how we rule out all the others, the bigraph pair $$\left(\mathfig{0.3}{associativity-example-1}, \mathfig{0.3}{associativity-example-2}\right)$$ fails the associativity test with the marked vertices: there is one vertex satisfying the conditions for $Z$ in Lemma \ref{lem:associativity}, but none satisfying the conditions for $U$.

Having found all equal extensions, we now look for unequal extensions of $(\Gamma, \Gamma')$ to add to the list of vines and apply the associativity test for vines (where we allow $V$ and $W$ to both be at the largest depth).
Since the new depth we're adding is odd, there can be no unequal extensions.  (However, notice that unequal extensions are easy to enumerate, they're of the form $(\Gamma, \cG')$ for $\cG' \in \mathbf{O}_{3+\sqrt{3}}(\Gamma')$ or of the form $(\cG, \Gamma')$ for $\cG \in \mathbf{O}_{3+\sqrt{3}}(\Gamma)$).  

Finally, the old weed $(\Gamma, \Gamma')$ is added to the list of vines.   So we can apply the associativity test to this as a vine and see that it fails.

At this point we've successfully run the odometer one step.  We now have the classification statement $\left((\Gamma,\Gamma'), 3+\sqrt{3},\emptyset,\cW_1\right)$.
  The progress of the odometer so far is summarized by the following figure.

$$
 \begin{tikzpicture}
\tikzset{grow=right,level distance=150pt}
\tikzset{every tree node/.style={draw,fill=white,rectangle,rounded corners,inner sep=2pt}}
\Tree
[.\node{$\!\!\begin{array}{c}\bigraph{bwd1v1v1v1p1v1x0p0x1v1x0p0x1p0x1duals1v1v1x2v2x1x3}\\\bigraph{bwd1v1v1v1p1v0x1p0x1v0x1duals1v1v1x2v1}\end{array}\!\!$};
	[.\node[fill=red!30]{$\!\!\begin{array}{c}\bigraph{bwd1v1v1v1p1v1x0p0x1v1x0p0x1p0x1v1x0x0duals1v1v1x2v2x1x3}\\\bigraph{bwd1v1v1v1p1v0x1p0x1v0x1v1duals1v1v1x2v1}\end{array}\!\!$};]
	[.\node[fill=red!30]{$\!\!\begin{array}{c}\bigraph{bwd1v1v1v1p1v1x0p0x1v1x0p0x1p0x1v1x0x0p0x0x1duals1v1v1x2v2x1x3}\\\bigraph{bwd1v1v1v1p1v0x1p0x1v0x1v1p1duals1v1v1x2v1}\end{array}\!\!$};]]
\end{tikzpicture}
$$

The current weeds are highlighted in the figure in red.  The old weed $(\Gamma, \Gamma')$ is left in the picture to explain what happened in the odometer at earlier steps (in this case the zeroth step) and is necessary for recovering the vines (see below).

We now run the odometer another step.  This means we go through the above process for each of the two weeds.  One of the weeds has no extensions which pass the associativity test.  The other weed has two extensions.  Furthermore there are two vines which pass, one of which is unequal.

At this point we have proved the classification statement 
\begin{align*}
\Big( (\Gamma,\Gamma'),  3 & +\sqrt{3}, \\
\Big\{ & \left(\bigraph{bwd1v1v1v1p1v1x0p0x1v1x0p0x1p0x1v1x0x0v1duals1v1v1x2v2x1x3v1}, \bigraph{bwd1v1v1v1p1v0x1p0x1v0x1v1duals1v1v1x2v1} \right)  ,\\
&  \left(\bigraph{bwd1v1v1v1p1v1x0p0x1v1x0p0x1p0x1v1x0x0p0x0x1v1x0p1x0p0x1duals1v1v1x2v2x1x3v3x2x1}, \bigraph{bwd1v1v1v1p1v0x1p0x1v0x1v1p1duals1v1v1x2v1} \right) \Big\}\\
 \Big\{&\left(\bigraph{bwd1v1v1v1p1v1x0p0x1v1x0p0x1p0x1v1x0x0p0x0x1v1x1duals1v1v1x2v2x1x3v1}, \bigraph{bwd1v1v1v1p1v0x1p0x1v0x1v1p1v0x1duals1v1v1x2v1v1}\right)  \\
& \left(\bigraph{bwd1v1v1v1p1v1x0p0x1v1x0p0x1p0x1v1x0x0p0x0x1v1x0p1x0p0x1p0x1duals1v1v1x2v2x1x3v4x2x3x1}, \bigraph{bwd1v1v1v1p1v0x1p0x1v0x1v1p1v0x1duals1v1v1x2v1v1} \right) \Big\} \Big)\\
\end{align*}

and the progress of the odometer is summarized by the following figure.
 $$
\scalebox{0.8}{
 \begin{tikzpicture}
\tikzset{grow=right,level distance=160pt}
\tikzset{every tree node/.style={draw,fill=white,rectangle,rounded corners,inner sep=2pt}}
\Tree
[.\node{$\!\!\begin{array}{c}\bigraph{bwd1v1v1v1p1v1x0p0x1v1x0p0x1p0x1duals1v1v1x2v2x1x3}\\\bigraph{bwd1v1v1v1p1v0x1p0x1v0x1duals1v1v1x2v1}\end{array}\!\!$};
	[.\node{$\!\!\begin{array}{c}\bigraph{bwd1v1v1v1p1v1x0p0x1v1x0p0x1p0x1v1x0x0duals1v1v1x2v2x1x3}\\\bigraph{bwd1v1v1v1p1v0x1p0x1v0x1v1duals1v1v1x2v1}\end{array}\!\!$};]
	[.\node{$\!\!\begin{array}{c}\bigraph{bwd1v1v1v1p1v1x0p0x1v1x0p0x1p0x1v1x0x0p0x0x1duals1v1v1x2v2x1x3}\\\bigraph{bwd1v1v1v1p1v0x1p0x1v0x1v1p1duals1v1v1x2v1}\end{array}\!\!$};
		[.\node[fill=red!30]{$\!\!\begin{array}{c}\bigraph{bwd1v1v1v1p1v1x0p0x1v1x0p0x1p0x1v1x0x0p0x0x1v1x1duals1v1v1x2v2x1x3v1}\\\bigraph{bwd1v1v1v1p1v0x1p0x1v0x1v1p1v0x1duals1v1v1x2v1v1}\end{array}\!\!$};]
		[.\node[fill=red!30]{$\!\!\begin{array}{c}\bigraph{bwd1v1v1v1p1v1x0p0x1v1x0p0x1p0x1v1x0x0p0x0x1v1x0p1x0p0x1p0x1duals1v1v1x2v2x1x3v4x2x3x1}\\\bigraph{bwd1v1v1v1p1v0x1p0x1v0x1v1p1v0x1duals1v1v1x2v1v1}\end{array}\!\!$};]]]
\end{tikzpicture}
}
$$

Running the odometer the next step gets us down to a single weed, summarized by the figure 
 $$
\scalebox{0.6}{
 \begin{tikzpicture}
\tikzset{grow=right,level distance=170pt}
\tikzset{every tree node/.style={draw,fill=white,rectangle,rounded corners,inner sep=2pt}}
\Tree
[.\node{$\!\!\begin{array}{c}\bigraph{bwd1v1v1v1p1v1x0p0x1v1x0p0x1p0x1duals1v1v1x2v2x1x3}\\\bigraph{bwd1v1v1v1p1v0x1p0x1v0x1duals1v1v1x2v1}\end{array}\!\!$};
	[.\node{$\!\!\begin{array}{c}\bigraph{bwd1v1v1v1p1v1x0p0x1v1x0p0x1p0x1v1x0x0duals1v1v1x2v2x1x3}\\\bigraph{bwd1v1v1v1p1v0x1p0x1v0x1v1duals1v1v1x2v1}\end{array}\!\!$};]
	[.\node{$\!\!\begin{array}{c}\bigraph{bwd1v1v1v1p1v1x0p0x1v1x0p0x1p0x1v1x0x0p0x0x1duals1v1v1x2v2x1x3}\\\bigraph{bwd1v1v1v1p1v0x1p0x1v0x1v1p1duals1v1v1x2v1}\end{array}\!\!$};
		[.\node{$\!\!\begin{array}{c}\bigraph{bwd1v1v1v1p1v1x0p0x1v1x0p0x1p0x1v1x0x0p0x0x1v1x1duals1v1v1x2v2x1x3v1}\\\bigraph{bwd1v1v1v1p1v0x1p0x1v0x1v1p1v0x1duals1v1v1x2v1v1}\end{array}\!\!$};]
		[.\node{$\!\!\begin{array}{c}\bigraph{bwd1v1v1v1p1v1x0p0x1v1x0p0x1p0x1v1x0x0p0x0x1v1x0p1x0p0x1p0x1duals1v1v1x2v2x1x3v4x2x3x1}\\\bigraph{bwd1v1v1v1p1v0x1p0x1v0x1v1p1v0x1duals1v1v1x2v1v1}\end{array}\!\!$};
			[.\node[fill=red!30]{$\!\!\begin{array}{c}\bigraph{bwd1v1v1v1p1v1x0p0x1v1x0p0x1p0x1v1x0x0p0x0x1v1x0p1x0p0x1p0x1v1x0x0x0duals1v1v1x2v2x1x3v4x2x3x1}\\\bigraph{bwd1v1v1v1p1v0x1p0x1v0x1v1p1v0x1v1duals1v1v1x2v1v1}\end{array}\!\!$};]]]]
\end{tikzpicture}
}
$$

On the next step of the odometer, there are no weeds that pass the associativity test. At this point the odometer stops, with nothing more to do.  So our final result is the classification statement 
\begin{align*}
\Big( (\Gamma,\Gamma'),  3 & +\sqrt{3}, \\
\Big\{ & \left(\bigraph{bwd1v1v1v1p1v1x0p0x1v1x0p0x1p0x1v1x0x0v1duals1v1v1x2v2x1x3v1}, \bigraph{bwd1v1v1v1p1v0x1p0x1v0x1v1duals1v1v1x2v1} \right) ,\\
& \left(\bigraph{bwd1v1v1v1p1v1x0p0x1v1x0p0x1p0x1v1x0x0p0x0x1v1x0p1x0p0x1duals1v1v1x2v2x1x3v3x2x1}, \bigraph{bwd1v1v1v1p1v0x1p0x1v0x1v1p1duals1v1v1x2v1} \right), \\
& \left(\bigraph{bwd1v1v1v1p1v1x0p0x1v1x0p0x1p0x1v1x0x0p0x0x1v1x1duals1v1v1x2v2x1x3v1}, \bigraph{bwd1v1v1v1p1v0x1p0x1v0x1v1p1v0x1duals1v1v1x2v1v1}\right) \\
& \left( \bigraph{bwd1v1v1v1p1v1x0p0x1v1x0p0x1p0x1v1x0x0p0x0x1v1x0p1x0p0x1p0x1v1x0x0x0v1duals1v1v1x2v2x1x3v4x2x3x1v1}, \bigraph{bwd1v1v1v1p1v0x1p0x1v0x1v1p1v0x1v1duals1v1v1x2v1v1}\right) \Big\}, \\
& \emptyset \Big)
\end{align*}

whose proof is summarized by the following figure.
 $$
\scalebox{0.6}{
 \begin{tikzpicture}
\tikzset{grow=right,level distance=170pt}
\tikzset{every tree node/.style={draw,fill=white,rectangle,rounded corners,inner sep=2pt}}
\Tree
[.\node{$\!\!\begin{array}{c}\bigraph{bwd1v1v1v1p1v1x0p0x1v1x0p0x1p0x1duals1v1v1x2v2x1x3}\\\bigraph{bwd1v1v1v1p1v0x1p0x1v0x1duals1v1v1x2v1}\end{array}\!\!$};
	[.\node{$\!\!\begin{array}{c}\bigraph{bwd1v1v1v1p1v1x0p0x1v1x0p0x1p0x1v1x0x0duals1v1v1x2v2x1x3}\\\bigraph{bwd1v1v1v1p1v0x1p0x1v0x1v1duals1v1v1x2v1}\end{array}\!\!$};]
	[.\node{$\!\!\begin{array}{c}\bigraph{bwd1v1v1v1p1v1x0p0x1v1x0p0x1p0x1v1x0x0p0x0x1duals1v1v1x2v2x1x3}\\\bigraph{bwd1v1v1v1p1v0x1p0x1v0x1v1p1duals1v1v1x2v1}\end{array}\!\!$};
		[.\node{$\!\!\begin{array}{c}\bigraph{bwd1v1v1v1p1v1x0p0x1v1x0p0x1p0x1v1x0x0p0x0x1v1x1duals1v1v1x2v2x1x3v1}\\\bigraph{bwd1v1v1v1p1v0x1p0x1v0x1v1p1v0x1duals1v1v1x2v1v1}\end{array}\!\!$};]
		[.\node{$\!\!\begin{array}{c}\bigraph{bwd1v1v1v1p1v1x0p0x1v1x0p0x1p0x1v1x0x0p0x0x1v1x0p1x0p0x1p0x1duals1v1v1x2v2x1x3v4x2x3x1}\\\bigraph{bwd1v1v1v1p1v0x1p0x1v0x1v1p1v0x1duals1v1v1x2v1v1}\end{array}\!\!$};
			[.\node{$\!\!\begin{array}{c}\bigraph{bwd1v1v1v1p1v1x0p0x1v1x0p0x1p0x1v1x0x0p0x0x1v1x0p1x0p0x1p0x1v1x0x0x0duals1v1v1x2v2x1x3v4x2x3x1}\\\bigraph{bwd1v1v1v1p1v0x1p0x1v0x1v1p1v0x1v1duals1v1v1x2v1v1}\end{array}\!\!$};]]]]
\end{tikzpicture}
}
$$

To read a classification statement off from a figure do the following:
\begin{itemize}
\item The weeds are the graph pairs highlighted in red.
\item The equal vines are the graph pairs not highlighted which in addition pass the associativity test for pairs of vertices at the largest depth (associativity for other pairs of vertices has already been checked).
\item The unequal vines are found by looking at unequal extensions of the non-highlighted pairs and checking associativity.  Note that if a graph pair passes the full associativity test, then any unequal extension of it automatically \emph{fails} the associativity test.  Furthermore, it is often easy to quickly deduce by hand the unequal extensions which pass the associativity test rather than enumerating then testing all of them.
\end{itemize}
 
Finally, we include a brief tutorial on using the \code{Mathematica} package called \code{FusionAtlas`} (written by the authors along with Dave Penneys, Emily Peters and James Tener) to perform these calculations. First, you'll need a copy of the package. The best way to obtain this is to first install `Subversion', then type at the command line
\begin{quote}
\code{svn checkout http://tqft.net/svn/FusionAtlas/}
\end{quote}
This will create a \code{FusionAtlas} directory in your current directory. Now, in \code{Mathematica}, we need to load the package. First, we add it to the path, with a command like
\begin{mma}
\begin{inm}
AppendTo[\$Path, "~/FusionAtlas/"]
\end{inm}
\end{mma}

(if you downloaded the package somewhere outside your home directory, you'll need to adjust this path). Next, we load the package, with
\begin{mma}
\begin{inm}
<<FusionAtlas`
\end{inm}
\end{mma}

(note the backtick at the end of the line). You should see a message saying the package has been successfully loaded.

The most powerful command for running the odometer is
\begin{quote}
\code{FindBigraphPairExtensionsUpToDepth[L][g1,g2,k]}
\end{quote}
Here \code{L} is the graph norm limit we're working with (the square root of the index), which should be some real number strictly larger than the limit we're really interested in. We've used $\sqrt{5} + 10^{-3}$ throughout. Using a slightly higher limit means that sometimes spurious results will be returned with index above 5, that have to be removed. The parameters \code{g1} and \code{g2} are the bigraphs with dual data that we want to extend. You can find a description of the syntax for specifying bigraphs on the \code{FusionAtlas`} web page, at \url{http://tqft.net/wiki/Atlas_of_subfactors}. All bigraphs have a representation as a string (you can find many examples by looking at the \LaTeX{} source of the article, from the arXiv: all the diagrams are generated automatically from these strings), and you can generate the appropriate expressions in Mathematica from these strings using the function \code{GraphFromString}, and display bigraphs using \code{DisplayBigraph}, for example
\begin{mma}
\begin{inm}
g1 = GraphFromString["bwd1v1v1v1p1v1x0p0x1v1x0p0x1p0x1duals1v1v1x2v2x1x3"]
\end{inm}
\begin{outm}
BigraphWithDuals[
 GradedBigraph[\{\{1\}\}, \{\{1\}\}, \{\{1\}\}, \{\{1\}, \{1\}\}, \{\{1, 0\}, \{0, 1\}\}, \{\{1,
     0\}, \{0, 1\}, \{0, 1\}\}], DualData[\{1\}, \{1\}, \{1, 2\}, \{2, 1, 3\}]]
\end{outm}
\begin{inm}
g2 = GraphFromString["bwd1v1v1v1p1v0x1p0x1v0x1duals1v1v1x2v1"]
\end{inm}
\begin{outm}
BigraphWithDuals[
 GradedBigraph[\{\{1\}\}, \{\{1\}\}, \{\{1\}\}, \{\{1\}, \{1\}\}, \{\{0, 1\}, \{0, 1\}\}, \{\{0,
     1\}\}], DualData[\{1\}, \{1\}, \{1, 2\}, \{1\}]]
\end{outm}
\begin{inm}
DisplayBigraph[g2]
\end{inm}
\begin{outm}
$$\scalebox{2.0}{$\bigraph{bwd1v1v1v1p1v0x1p0x1v0x1duals1v1v1x2v1}$}$$
\end{outm}
\end{mma}

The final parameter \code{k}  specifies the maximum number of times to run the odometer; $\infty$ is an allowed value, although in that case there is no guarantee of termination. The output of \code{FindBigraphPairExtensionsUpToDepth} is a list of three lists. The first list consists of all the vines produced, the second list consists of all the weeds produced, and in this mode the third list is always empty. Thus we can run
\begin{mma}
\begin{inm}
\{vines, weeds, \{\}\} = \\ \hspace{1.8cm}FindBigraphPairExtensionsUpToDepth[Sqrt[3+Sqrt[3]]+$10^{-3}$][g1,g2,$\infty$];
\end{inm}
\begin{inm}
DisplayBigraph /@ vines
\end{inm}
\begin{outm}
\begin{align*}
\Big\{ & \left(\bigraph{bwd1v1v1v1p1v1x0p0x1v1x0p0x1p0x1v1x0x0v1duals1v1v1x2v2x1x3v1}, \bigraph{bwd1v1v1v1p1v0x1p0x1v0x1v1duals1v1v1x2v1} \right) ,\\
& \left(\bigraph{bwd1v1v1v1p1v1x0p0x1v1x0p0x1p0x1v1x0x0p0x0x1v1x0p1x0p0x1duals1v1v1x2v2x1x3v3x2x1}, \bigraph{bwd1v1v1v1p1v0x1p0x1v0x1v1p1duals1v1v1x2v1} \right), \\
& \left(\bigraph{bwd1v1v1v1p1v1x0p0x1v1x0p0x1p0x1v1x0x0p0x0x1v1x1duals1v1v1x2v2x1x3v1}, \bigraph{bwd1v1v1v1p1v0x1p0x1v0x1v1p1v0x1duals1v1v1x2v1v1}\right) \\
& \left( \bigraph{bwd1v1v1v1p1v1x0p0x1v1x0p0x1p0x1v1x0x0p0x0x1v1x0p1x0p0x1p0x1v1x0x0x0v1duals1v1v1x2v2x1x3v4x2x3x1v1}, \bigraph{bwd1v1v1v1p1v0x1p0x1v0x1v1p1v0x1v1duals1v1v1x2v1v1}\right) \Big\}
\end{align*}
\end{outm}
\begin{inm}
weeds
\end{inm}
\begin{outm}
\{\}
\end{outm}
\end{mma}

exactly reproducing the classification described above.

Finally, it is possible to tell  \code{FindBigraphPairExtensionsUpToDepth} to not run the odometer further on some specific weeds, by supplying a fourth argument \code{"Weeds" -> \{ \{h1, h2\}, \{h3, h4\}, ... \}}, where \code{\{h1, h2\}} are a bigraph pair, etc. Now the third list in the output repeats back these weeds which were not run further.

Since this paper was finished, we have written an independent and much faster reimplementation of the odometer in the language \code{Scala}. Happily, it produces all the same results as those described here (and does so in under a minute on a MacBook Pro).  This reimplementation is not yet in a state that others can readily use, but we are happy to share the code and expect that it will soon be more accessible.

\section{$1$-supertransitive subfactors}
In this section we use ad hoc methods to eliminate all $1$-supertransitive subfactors of index between $4$ and $5$.  This reduces the size of the odometer output drastically, simply because there are far fewer 3-supertransitive graphs below index $5$ than there are $1$-supertransitive graphs below index $5$.  The crux of the proof is that no subfactor of index between $4$ and $5$ can have an intermediate subfactor.

\begin{thm}
\label{thm:intermediate}
There are no $1$-supertransitive subfactors with index strictly between $4$ and $5$.
\end{thm}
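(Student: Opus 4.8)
The plan is to combine one clean global fact — a subfactor of index strictly between $4$ and $5$ has no proper intermediate subfactor — with the enumeration machinery of Section~\ref{sec:running} applied to the handful of $1$-supertransitive starting graph pairs of index below $5$.

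\emph{The crux.} By Jones' theorem every subfactor has index in $\{4\cos^2(\pi/n):n\geq 3\}\cup[4,\infty)$, so the only admissible values below $3$ are $1$, $2$ and $(3+\sqrt5)/2$, and none lies in $(1,2)$ or in $(2,5/2)$. If $N\subset P\subset M$ with both inclusions proper then $[M:N]=[M:P]\cdot[P:N]$ is a product of two admissible values each at least $2$: if one factor is $2$ the other lies in $[2,5/2)$ and so equals $2$, and if one factor is at least $(3+\sqrt5)/2$ the other is below $5/((3+\sqrt5)/2)<2$, impossible. Hence the product is $4$ or exceeds $5$, so a subfactor of index in $(4,5)$ has no proper intermediate subfactor. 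Via the correspondence between intermediate subfactors and biprojections, this says that for such a subfactor the two-box space $P_{2,+}$ contains no biprojection other than the Jones projection $e_1$ and the identity; since the relevant piece of $P_{2,+}$ is visible through depth $2$, this is a \emph{local} condition, usable to prune weeds.

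\emph{Reduction.} Supertransitivity exactly $1$ forces the unique depth-$1$ vertex $\rho$ to be a branch point or to carry a multiple edge into depth $2$. A multiple edge makes the principal graph contain a length-two path with a doubled edge, whose norm is $\sqrt5$, forcing index at least $5$; so $\rho$ is joined by single edges to $k$ distinct depth-$2$ vertices, the star at $\rho$ (with its edge to the root) has norm $\sqrt{k+1}\leq$ graph norm $<\sqrt5$, and hence $k\in\{2,3\}$, with the dual graph having $k$ depth-$2$ vertices too (from $P_{2,+}\cong P_{2,-}$). Writing $Y_1,\dots,Y_k$ for the depth-$2$ bimodules, $\sum_i\dim(Y_i)=[M:N]-1\in(3,4)$. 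A non-invertible simple object has dimension at least $\sqrt2$, and $3\sqrt2>4$, so when $k=3$ some $Y_i$ is invertible; then the cyclic group it generates gives an intermediate subfactor whose index is its order, an integer $\geq2$, which clashes with $[M:N]\in(4,5)$ — so $k=3$ is impossible. The same argument eliminates $k=2$ whenever some $Y_i$ is invertible, and more generally whenever the dimensions force a sub-bimodule $\mathbf 1\oplus\bigoplus_{i\in S}Y_i$ to be closed under tensor product (for instance a self-dual $Y_i$ of dimension $(1+\sqrt5)/2$, for which $\mathbf 1\oplus Y_i$ is closed and has dimension $(3+\sqrt5)/2$).

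\emph{Finishing, and the obstacle.} For the remaining $k=2$ shapes — two non-invertible depth-$2$ objects, with $\dim(Y_1)+\dim(Y_2)\in(3,4)$ and (since a depth-$2$ subfactor would have integer index) genuinely deep or infinite principal graphs — I would start the odometer of Theorem~\ref{thm:odometer} from the two $1$-supertransitive starting pairs, prune with the associativity test of Lemma~\ref{lem:associativity}, the triple-point obstructions, the no-intermediate-subfactor condition above, and the cyclotomic constraints on the dimensions of the $Y_i$ and of deeper vertices, and verify that every branch dies with no admissible principal graph pair of index in $(4,5)$. The step I expect to be the real obstacle is exactly this terminal analysis of the biprojection-free $k=2$ cases: the structure constants of $P_{2,+}$ are not pinned down by the principal graph pair alone, the window $(3,4)$ for $\dim(Y_1)+\dim(Y_2)$ still permits several shapes, and one has to extract a contradiction from positivity together with the number-theoretic constraints while keeping the case list finite and hand-checkable.
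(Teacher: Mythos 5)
Your overall skeleton matches the paper's — both proofs hinge on the fact that no product of two Jones-admissible indices lands in $(4,5)$, so that no intermediate subfactor can exist — and your handling of the all-invertible case, the mixed-invertible case, the multi-edge case, and the $k=3$ case are all correct. But you are right to flag the remaining $k=2$ case as the obstacle, and it is a genuine gap: delegating it to ``run the odometer and prune with number theory'' is not a proof, since that would require an a priori depth bound you do not have, and the Fibonacci observation you do give only handles one value of $\dim Y_i$ in the interval $(1,2)$, not the whole interval.

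The paper closes this gap with a short argument that you are missing. Suppose some depth-$2$ object $V$ has $1 < \dim V < 2$. Look at the fusion graph for tensoring on the \emph{left} by $V$ (not the principal graph, which records right multiplication) and take the connected component of the fundamental bimodule $X$. Since the dimension function is a Frobenius--Perron eigenvector, this finite graph has spectral radius $\dim V < 2$. Now the key step: $V \subset X \otimes X^*$ because $V$ is at depth $2$, so by Frobenius reciprocity $X \subset V \otimes X$, i.e.\ $X$ carries a \emph{self-loop} in this fusion graph. A finite connected graph with a self-loop and norm $< 2$ must be a tadpole $T_n$, and writing the Frobenius--Perron eigenvector as $(1, \dim Y_1/\dim Y_0, \ldots, \dim X/\dim Y_0)$ with $Y_0$ at the non-loop end, one checks directly: for $T_2$ the ratio $\dim X/\dim Y_0$ is $(1+\sqrt5)/2$, so if $\dim Y_0 = 1$ the index is below $4$ and if $\dim Y_0 \geq \sqrt2$ the index exceeds $5$; and for $T_n$ with $n>2$ the ratio alone already squares to more than $5$. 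Thus every depth-$2$ object in fact has dimension at least $2$, and with at least two of them the index is at least $1+2+2 = 5$. This finishes the theorem with no enumeration at all, and it is precisely the ingredient your proposal lacks.
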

\begin{proof}
Suppose that all of the objects at depth $2$ have dimension $1$.  Then the index is an integer, and so does not lie between $4$ and $5$.  Suppose that some object at depth $2$ has dimension $1$, but that not all objects at depth $2$ have dimension $1$.  Then there must exist an intermediate subfactor.  However, there are no numbers between $4$ and $5$ which are the product of two allowed indices.

Let $X$ be the fundamental object.  Suppose that there is an object $V$ at depth $2$ with dimension bigger than $1$ but less than $2$.  Consider the connected component of $X$ in the fusion graph for tensoring on the left with $V$. (Be careful here: usually we talk about principal graphs for tensoring on the right with an object.)  The Frobenius-Perron eigenvalue for this graph is $\dim V < 2$, hence  this graph must be an ADET type graph.  Furthermore, the Frobenius-Perron eigenvector is, up to scaling, given by the dimensions of the fundamental object and the other objects which come up in the fusion graph.  From the principal graph we see that there's a non-zero map from $X \otimes X^* \rightarrow V$.  Hence by Frobenius reciprocity, there's a nonzero map $X \rightarrow V \tensor X$.  Therefore in the fusion graph, $X$ must have a self-loop.  Hence $X$ must be at the loop end of a type $T$ graph.  Let $Y_0, Y_1, \ldots $ be the other objects in this graph with $Y_0$ all the way at the non-loop end.  Then the normalized Frobenius-Perron eigenvector is $(1, \dim Y_1 / \dim Y_0, \ldots, \dim X / \dim Y_0)$.  If the fusion graph is $T_2$ we see that $\dim X / \dim Y_0 = (1+\sqrt{5})/2$.  If $\dim Y_0 = 1$, then the index, $(\dim X)^2$, is less than $4$, while if $\dim Y_0$ is at least $\sqrt{2}$ then $\dim X$ is larger than $5$.  If the fusion graph is $T_k$ for $k>2$, then $(\dim X)^2 > (\dim X / \dim Y_0)^2 > 5$.

Thus there are at least two objects at depth $2$ both of which have dimension at least $2$, hence the index is at least $1+2+2 = 5$.
\end{proof}

We note that in order to extend our work here to include index equal to $5$, one would have to classify  by hand the $1$-supertransitive graphs at index exactly $5$, by extending the methods of the proof here.


\section{Main result}
We now use the techniques described in the three previous sections to develop classification statements for all subfactors with index between $4$ and $5$. In particular, we obtain the following classification statement with a `manageable' set of weeds.

\begin{thm}
\label{thm:main}
Subfactors with index between $4$ and $5$ are described by the classification statement $$((\bigraph{bwd1duals1}, \bigraph{bwd1duals1}), 5, \cV_\infty, \cW_\infty)$$ with
\begin{align*}
\cV_\infty  = \Big \{
    & \left(\bigraph{bwd1v1v1p1p1duals1v1}, \bigraph{bwd1v1v1p1p1duals1v1}\right), \displaybreak[1]\\
    & \left(\bigraph{bwd1v1v1v1p1p1duals1v1v1x2x3}, \bigraph{bwd1v1v1v1p1p1duals1v1v1x2x3}\right), \displaybreak[1]\\
    & \left(\bigraph{bwd1v1v1v1p1p1duals1v1v3x2x1}, \bigraph{bwd1v1v1v1p1p1duals1v1v3x2x1}\right), \displaybreak[1]\\
    & \left(\bigraph{bwd1v1v1p1v1x1duals1v1v1}, \bigraph{bwd1v1v1p1v1x1duals1v1v1}\right), \displaybreak[1]\\
    & \left(\bigraph{bwd1v1v1p1v1x0p1x0p0x1p0x1duals1v1v4x2x3x1}, \bigraph{bwd1v1v1p1v1x1duals1v1v1}\right), \displaybreak[1]\\
    & \left(\bigraph{bwd1v1v1p1p1v1x0x0p0x1x0duals1v1v1x2}, \bigraph{bwd1v1v1p1p1v1x0x0p0x1x0duals1v1v1x2}\right), \displaybreak[1]\\
    & \left(\bigraph{bwd1v1v1p1p1v1x0x0p0x1x0duals1v1v2x1}, \bigraph{bwd1v1v1p1p1v1x0x0p0x1x0duals1v1v2x1}\right), \displaybreak[1]\\
    & \left(\bigraph{bwd1v1v1p1p1v1x0x0p1x0x0duals1v1v1x2}, \bigraph{bwd1v1v1p1p1v1x0x0p1x0x0duals1v1v1x2}\right), \displaybreak[1]\\
    & \left(\bigraph{bwd1v1v1p1p1v1x0x0p1x0x0duals1v1v2x1}, \bigraph{bwd1v1v1p1p1v1x0x0p1x0x0duals1v1v1x2}\right), \displaybreak[1]\\
    & \left(\bigraph{bwd1v1v1p1p1v1x0x0p1x0x0duals1v1v2x1}, \bigraph{bwd1v1v1p1p1v1x0x0p1x0x0duals1v1v2x1}\right), \displaybreak[1]\\
    & \left(\bigraph{bwd1v1v1v1p1v1x1duals1v1v1x2}, \bigraph{bwd1v1v1v1p1v1x1duals1v1v1x2}\right), \displaybreak[1]\\
    & \left(\bigraph{bwd1v1v1v1p1v1x1duals1v1v2x1}, \bigraph{bwd1v1v1v1p1v1x1duals1v1v2x1}\right), \displaybreak[1]\\
    & \left(\bigraph{bwd1v1v1v1p1p1v1x0x0p0x0x1duals1v1v3x2x1}, \bigraph{bwd1v1v1v1p1p1v1x0x0p0x0x1duals1v1v3x2x1}\right), \displaybreak[1]\\
    & \left(\bigraph{bwd1v1v1v1p1p1v1x0x0p0x1x0duals1v1v1x2x3}, \bigraph{bwd1v1v1v1p1p1v1x0x0p0x1x0duals1v1v1x2x3}\right), \displaybreak[1]\\
    & \left(\bigraph{bwd1v1v1p1p1v1x0x0p0x1x0v1x0p0x1duals1v1v2x1}, \bigraph{bwd1v1v1p1p1v1x0x0p0x1x0v1x0p0x1duals1v1v2x1}\right), \displaybreak[1]\\
    & \left(\bigraph{bwd1v1v1v1p1v1x0p0x1v1x0p0x1duals1v1v1x2v2x1}, \bigraph{bwd1v1v1v1p1v1x0p1x0duals1v1v1x2}\right), \displaybreak[1]\\
    & \left(\bigraph{bwd1v1v1v1p1v1x0p1x0v1x0p0x1duals1v1v1x2v1x2}, \bigraph{bwd1v1v1v1p1v1x0p0x1v1x1duals1v1v1x2v1}\right), \displaybreak[1]\\
    & \left(\bigraph{bwd1v1v1v1p1v1x0p1x0v1x0p1x0p0x1duals1v1v1x2v1x2x3}, \bigraph{bwd1v1v1v1p1v1x0p0x1v1x1p1x0duals1v1v1x2v1x2}\right), \displaybreak[1]\\
    & \left(\bigraph{bwd1v1v1v1p1v1x0p1x0v1x0p1x0p0x1duals1v1v1x2v2x1x3}, \bigraph{bwd1v1v1v1p1v1x0p0x1v1x1p1x0duals1v1v1x2v1x2}\right), \displaybreak[1]\\
    & \left(\bigraph{bwd1v1v1v1p1v1x0p1x0p0x1v1x0x0p0x1x0p0x0x1p0x0x1duals1v1v1x2v4x3x2x1}, \bigraph{bwd1v1v1v1p1v1x0p1x0p1x0duals1v1v1x2}\right), \displaybreak[1]\\
    & \left(\bigraph{bwd1v1v1p1v1x0p1x0p0x1v0x1x0p0x0x1v0x1duals1v1v3x2x1v1}, \bigraph{bwd1v1v1p1v1x0p1x0p0x1v0x0x1p0x1x0v1x0duals1v1v3x2x1v1}\right), \displaybreak[1]\\
    & \left(\bigraph{bwd1v1v1p1p1v1x0x0p0x1x0v1x0p0x1v1x0p0x1duals1v1v2x1v2x1}, \bigraph{bwd1v1v1p1p1v1x0x0p0x1x0v1x0p0x1v1x0p0x1duals1v1v2x1v2x1}\right), \displaybreak[1]\\
    & \left(\bigraph{bwd1v1v1v1v1p1p1v1x0x0p0x1x0v1x0duals1v1v1v1x2}, \bigraph{bwd1v1v1v1v1p1p1v1x0x0p0x1x0v1x0duals1v1v1v1x2}\right), \displaybreak[1]\\
    & \left(\bigraph{bwd1v1v1v1p1v1x0p1x0v1x0p1x0p0x1v1x0x0duals1v1v1x2v1x2x3}, \bigraph{bwd1v1v1v1p1v1x0p0x1v1x1p1x0v0x1duals1v1v1x2v1x2}\right), \displaybreak[1]\\
    & \left(\bigraph{bwd1v1v1v1p1v1x0p1x0p0x1v1x0x0p0x0x1v0x1duals1v1v1x2v2x1}, \bigraph{bwd1v1v1v1p1v1x0p0x1p1x0v1x0x0p0x1x0v0x1duals1v1v1x2v2x1}\right), \displaybreak[1]\\
    & \left(\bigraph{bwd1v1v1v1v1v1p1p1v1x0x0p0x1x0v1x0duals1v1v1v1x2x3v1}, \bigraph{bwd1v1v1v1v1v1p1p1v1x0x0p0x1x0v1x0duals1v1v1v1x2x3v1}\right), \displaybreak[1]\\
    & \left(\bigraph{bwd1v1v1v1v1p1p1v1x0x0p0x1x0v1x0v1duals1v1v1v1x2v1}, \bigraph{bwd1v1v1v1v1p1p1v1x0x0p0x1x0v1x0v1duals1v1v1v1x2v1}\right), \displaybreak[1]\\
    & \left(\bigraph{bwd1v1v1v1p1v1x0p0x1v1x1v1v1duals1v1v1x2v1v1}, \bigraph{bwd1v1v1v1p1v1x0p1x0v1x0p0x1v1x1duals1v1v1x2v1x2}\right), \displaybreak[1]\\
    & \left(\bigraph{bwd1v1v1v1p1v1x0p0x1v1x0p1x0p0x1v0x0x1v1duals1v1v1x2v3x2x1v1}, \bigraph{bwd1v1v1v1p1v1x0p1x0v1x0v1duals1v1v1x2v1}\right), \displaybreak[1]\\
    & \left(\bigraph{bwd1v1v1v1p1v1x0p0x1v1x0p1x0p0x1v0x1x0p0x1x0p0x0x1v1x0x0p0x1x0p0x0x1p0x0x1p0x0x1duals1v1v1x2v3x2x1v5x4x3x2x1}, \bigraph{bwd1v1v1v1p1v1x0p1x0v1x0v1p1p1duals1v1v1x2v1}\right), \displaybreak[1]\\
    & \left(\bigraph{bwd1v1v1v1p1v1x0p0x1v1x0p1x0p1x0p0x1v0x0x0x1p0x0x0x1v1x0p0x1duals1v1v1x2v4x2x3x1v1x2}, \bigraph{bwd1v1v1v1p1v1x0p1x0v1x0p1x0v1x0p0x1duals1v1v1x2v1x2}\right), \displaybreak[1]\\
    & \left(\bigraph{bwd1v1v1v1p1v1x0p0x1v1x0p1x0p1x0p0x1v0x0x0x1p0x0x0x1v1x0p0x1duals1v1v1x2v4x2x3x1v2x1}, \bigraph{bwd1v1v1v1p1v1x0p1x0v1x0p1x0v1x0p0x1duals1v1v1x2v2x1}\right), \displaybreak[1]\\
    & \left(\bigraph{bwd1v1v1v1p1v1x0p0x1v1x0p1x0p1x0p0x1v0x0x0x1p0x0x0x1v1x0p0x1duals1v1v1x2v4x3x2x1v1x2}, \bigraph{bwd1v1v1v1p1v1x0p1x0v1x0p1x0v1x0p0x1duals1v1v1x2v1x2}\right), \displaybreak[1]\\
    & \left(\bigraph{bwd1v1v1v1p1v1x0p0x1v1x0p1x0p1x0p0x1v0x0x0x1p0x0x0x1v1x0p0x1duals1v1v1x2v4x3x2x1v2x1}, \bigraph{bwd1v1v1v1p1v1x0p1x0v1x0p1x0v1x0p0x1duals1v1v1x2v2x1}\right), \displaybreak[1]\\
    & \left(\bigraph{bwd1v1v1v1p1v1x0p1x0v1x0p0x1v1x0p0x1v1x1duals1v1v1x2v1x2v1}, \bigraph{bwd1v1v1v1p1v1x0p0x1v1x0p1x0p0x1p0x1v0x0x0x1p1x0x0x0v1x0p1x0p0x1p0x1duals1v1v1x2v4x2x3x1v1x2x3x4}\right), \displaybreak[1]\\
    & \left(\bigraph{bwd1v1v1v1p1v1x0p1x0v1x0p0x1v1x0p0x1v1x1duals1v1v1x2v1x2v1}, \bigraph{bwd1v1v1v1p1v1x0p0x1v1x0p1x0p0x1p0x1v0x0x0x1p1x0x0x0v1x0p1x0p0x1p0x1duals1v1v1x2v4x2x3x1v2x1x3x4}\right), \displaybreak[1]\\
    & \left(\bigraph{bwd1v1v1v1p1v1x0p1x0v1x0p0x1v1x0p0x1v1x1duals1v1v1x2v1x2v1}, \bigraph{bwd1v1v1v1p1v1x0p0x1v1x0p1x0p0x1p0x1v0x0x0x1p1x0x0x0v1x0p1x0p0x1p0x1duals1v1v1x2v4x2x3x1v2x1x4x3}\right), \displaybreak[1]\\
    & \left(\bigraph{bwd1v1v1v1p1v1x0p1x0v1x0p0x1v1x0p0x1v1x0p1x0p0x1p0x1duals1v1v1x2v1x2v4x3x2x1}, \bigraph{bwd1v1v1v1p1v1x0p0x1v1x0p1x0p0x1p0x1v0x0x0x1p1x0x0x0v1x1duals1v1v1x2v4x2x3x1v1}\right), \displaybreak[1]\\
    & \left(\bigraph{bwd1v1v1v1v1v1p1p1v1x0x0p0x1x0v1x0v1duals1v1v1v1x2x3v1}, \bigraph{bwd1v1v1v1v1v1p1p1v1x0x0p0x1x0v1x0v1duals1v1v1v1x2x3v1}\right), \displaybreak[1]\\
    & \left(\bigraph{bwd1v1v1v1v1p1p1v1x0x0p0x1x0v1x0v1v1duals1v1v1v1x2v1}, \bigraph{bwd1v1v1v1v1p1p1v1x0x0p0x1x0v1x0v1v1duals1v1v1v1x2v1}\right), \displaybreak[1]\\
    & \left(\bigraph{bwd1v1v1v1p1v1x0p0x1v1x0p1x0p0x1p0x1v0x0x0x1p1x0x0x0v1x1v1v1duals1v1v1x2v4x2x3x1v1v1}, \bigraph{bwd1v1v1v1p1v1x0p1x0v1x0p0x1v1x0p0x1v1x0p1x0p0x1p0x1v1x0x0x1duals1v1v1x2v1x2v4x3x2x1}\right), \displaybreak[1]\\
    & \left(\bigraph{bwd1v1v1v1p1v1x0p1x0v1x0p0x1v1x0p0x1v1x1v1v1duals1v1v1x2v1x2v1v1}, \bigraph{bwd1v1v1v1p1v1x0p0x1v1x0p1x0p0x1p0x1v0x0x0x1p1x0x0x0v1x0p1x0p0x1p0x1v1x0x1x0duals1v1v1x2v4x2x3x1v1x2x3x4}\right), \displaybreak[1]\\
    & \left(\bigraph{bwd1v1v1v1p1v1x0p1x0p0x1v1x0x0p0x1x0p0x0x1v0x1x0p0x0x1v1x0p0x1v0x1v1duals1v1v1x2v3x2x1v2x1v1}, \bigraph{bwd1v1v1v1p1v1x0p0x1p1x0v1x0x0p0x1x0p0x1x0v1x0x0p0x0x1v1x0v1duals1v1v1x2v3x2x1v1}\right) \displaybreak[1]
 \Big \} \\
\intertext{and}
\cW_\infty  = \Big\{    \cC &=  \left(\bigraph{bwd1v1v1v1p1v1x0p1x0v1x0v1p1duals1v1v1x2v1}, \bigraph{bwd1v1v1v1p1v1x0p0x1v1x0p1x0p0x1v0x1x0p0x0x1duals1v1v1x2v3x2x1}\right), \displaybreak[1]\\
     \cF &= \left(\bigraph{bwd1v1v1v1p1v1x0p1x0v1x0p0x1v1x0p1x0p0x1v0x1x0p0x0x1v1x0p0x1p0x1duals1v1v1x2v1x2v2x1}, \bigraph{bwd1v1v1v1p1v1x0p0x1v1x0p1x0p0x1p0x1v0x1x0x0p0x0x0x1p1x0x0x0v1x0x0p0x1x0p0x1x0p0x0x1v0x0x0x1p1x0x0x0p0x1x0x0duals1v1v1x2v4x2x3x1v3x2x1x4}\right), \displaybreak[1]\\
      \cB &= \left(\bigraph{bwd1v1v1v1p1v1x0p1x0v1x0p0x1v1x0p0x1v1x0p0x1v1x0p0x1duals1v1v1x2v1x2v2x1}, \bigraph{bwd1v1v1v1p1v1x0p0x1v1x0p1x0p0x1p0x1v0x0x0x1p1x0x0x0v1x0p0x1v0x1p1x0duals1v1v1x2v4x2x3x1v1x2}\right), \displaybreak[1]\\
      \cQ &=\left(\bigraph{bwd1v1v1v1p1p1v1x0x0duals1v1v1x3x2}, \bigraph{bwd1v1v1v1p1p1v1x0x0duals1v1v1x3x2}\right)\\
    \cQ' &= \left(\bigraph{bwd1v1v1v1p1p1v1x0x0duals1v1v1x2x3}, \bigraph{bwd1v1v1v1p1p1v1x0x0duals1v1v1x2x3}\right)
	\Big\}
\end{align*}
\end{thm}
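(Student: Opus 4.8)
The plan is to build the classification statement by starting from the tautological statement at the trivial one-vertex graph pair with limit~$5$ and repeatedly applying the odometer (Theorem~\ref{thm:odometer}), at every stage discarding any vine or weed excluded by one of the local obstructions proved above: the associativity test (Lemma~\ref{lem:associativity}), the triple point obstructions (Corollaries~\ref{lem:even-triple-point-local} and~\ref{lem:odd-triple-point-local}), the duals obstruction (Lemma~\ref{lem:duals}), and the even quadruple point obstruction (Theorem~\ref{thm:evenquad}). Each odometer step inspects the graphs only one depth deeper and the Frobenius--Perron bound $\Lambda=5$ bounds every vertex valence, so each stage is a finite computation; we run the whole search by computer (and independently re-run it), and the resulting tree is auditable by hand. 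Since every subfactor's principal graph pair starts like the trivial pair, such a search sees every subfactor of index in $(4,5)$.

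\textbf{Organizing the search.}
Run from the trivial pair the odometer is hopeless in practice, because of the huge number of $1$-supertransitive graphs of norm below $\sqrt5$. The first move is therefore to invoke Theorem~\ref{thm:intermediate}: since an irreducible subfactor always has supertransitivity at least~$1$ and, in index $(4,5)$, never exactly~$1$, we may assume supertransitivity at least~$2$, so the first two odometer steps are forced (single vertices at depths~$1$ and~$2$) and the first real choices appear when the depth-$3$ vertices are added. From there I would organize the remaining tree by supertransitivity and by the first nontrivial annular multiplicity, which dictates the shape of the initial branching --- whether the first deviation from the $A$-string is a multiple edge, a triple point, or a quadruple point, and at an even or an odd depth. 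This is the ``right balance'' between enumeration and obstruction referred to in the introduction: in each subcase one pushes the odometer just far enough for the available obstructions to bite, prunes, and repeats.

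\textbf{Resolving the subcases.}
In the subcases whose initial branch is a multiple edge, a triple point at an even depth, or a quadruple point covered by Theorem~\ref{thm:evenquad}, the even triple point obstruction (Corollary~\ref{lem:even-triple-point-local}), Theorem~\ref{thm:evenquad}, and the duals obstruction (Lemma~\ref{lem:duals}), together with associativity, suffice: after finitely many odometer steps every weed in such a subcase disappears and only vines remain. In the quadruple-point subcases not reached by Theorem~\ref{thm:evenquad} the odd triple point obstruction (Corollary~\ref{lem:odd-triple-point-local}) and associativity are not strong enough to kill everything, and the search runs down to the two stuck weeds $\cQ$ and $\cQ'$; similarly, in the remaining triple-point subcases these obstructions prune almost everything but three weeds, $\cC$, $\cF$ and $\cB$, survive. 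Collecting the vines produced along all branches yields $\cV_\infty$, and the five weeds that cannot be pushed any further are exactly $\cW_\infty=\{\cC,\cF,\cB,\cQ,\cQ'\}$.

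\textbf{The main difficulty.}
The hard part is not a single clever argument but the bookkeeping of this enormous case analysis: one must verify that, outside the five exceptional weeds, every subcase really does terminate with no weeds left, that no branch of the tree is overlooked, and that no genuine graph pair is discarded --- with the added annoyance that the computer search is actually run at the slightly inflated limit $\sqrt5+10^{-3}$, so a handful of spurious pairs of index above~$5$ must be removed by hand. The obstructions developed in this paper are genuinely insufficient to eliminate $\cC$, $\cF$, $\cB$, $\cQ$ and $\cQ'$, and one checks directly that none of the local obstructions above applies to them; ruling them out requires the connection and quadratic-tangles techniques of the sequels. So the most one can establish here is that these five, and only these five, remain, packaged in a form the later papers can use.
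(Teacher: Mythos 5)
Your overall strategy matches the paper's: start from the tautological classification statement, prune with Theorem~\ref{thm:intermediate} to skip 1-supertransitivity, run the odometer organized by the form of the initial branch (triple vs.\ quadruple point, odd vs.\ even depth, annular multiplicities), and prune with the local obstructions (associativity, the two triple-point corollaries, Lemma~\ref{lem:duals}, Theorem~\ref{thm:evenquad}). But there is a concrete gap: the obstructions you list do \emph{not} suffice to reduce to $\{\cC,\cF,\cB,\cQ,\cQ'\}$. In \S\ref{sec:killing-quadruple-weeds} the paper must kill three stuck quadruple-point weeds by explicit dimension computations, not by any of the local tests you cite. For each of $\left(\bigraph{bwd1v1v1v1p1p1v1x0x0p0x1x0v1x0duals1v1v1x2x3v1}, \bigraph{bwd1v1v1v1p1p1v1x0x0p0x1x0v1x0duals1v1v1x2x3v1}\right)$, $\left(\bigraph{bwd1v1v1p1p1v1x0x0p0x1x0v1x0duals1v1v1x2}, \bigraph{bwd1v1v1p1p1v1x0x0p0x1x0v1x0duals1v1v1x2}\right)$, and $\left(\bigraph{bwd1v1v1p1p1v1x0x0p0x1x0v1x0p0x1duals1v1v1x2}, \bigraph{bwd1v1v1p1p1v1x0x0p0x1x0v1x0p0x1duals1v1v1x2}\right)$, one writes down the Frobenius--Perron dimension of a particular vertex (or the sum of dimensions of a pair) as an explicit Laurent polynomial in $q$, restricts $q$ to the interval $(q_0,\frac{1+\sqrt5}2)$ forced by the index constraint and the graph already drawn, and checks the value drops below $1$ (resp.\ $2$); in one subcase one must also split on whether the graph is translated and note that the translated graph already has norm exceeding $\sqrt5$. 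This is a different kind of argument --- global dimension bookkeeping on a concrete interval --- and it cannot be replaced by associativity, the triple-point corollaries, Lemma~\ref{lem:duals}, or Theorem~\ref{thm:evenquad}. As stated, your process would terminate with more than five weeds.

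Two smaller inaccuracies worth fixing. First, you describe the odd triple point obstruction as operating in the quadruple-point subcases; in the paper Corollaries~\ref{lem:odd-triple-point-local} and~\ref{lem:even-triple-point-local} only bite on the initial-triple-point branches ($\Gamma_{o1,a}$, $\Gamma_{o1,c}$, $\Gamma_{e1}$), while the quadruple-point branches are handled only by associativity, Theorem~\ref{thm:evenquad}, and the dimension arguments above. Second, it is not that the five surviving weeds ``cannot be pushed any further'': the odometer appears to run indefinitely on the annular-multiplicity-$10$ branch and on the quadruple-point branches, and the paper makes a deliberate, non-canonical choice of where to stop, balancing the size of the vine list against the depth of the remaining weeds with an eye to what the sequels can eliminate. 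Your proof should acknowledge that $\cW_\infty$ is a chosen stopping set, not a forced one.
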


The vines can all be eliminated (with four exceptions: Haagerup, Asaeda-Haagerup, extended Haagerup, and 2221) by showing that the indices are non-cyclotomic by applying the results of \cite{1004.0665}.  This will be done in an upcoming paper \cite{index5-part4} by Dave Penneys and James Tener.  Furthermore, a forthcoming joint paper\cite{index5-part2} with Dave Penneys and Emily Peters shows that there are no subfactors with principal graphs starting like $\cB$, $\cC$ or $\cF$, at any index. A forthcoming paper \cite{index5-part3} joint with Masaki Izumi and Vaughan Jones shows that there are no subfactors with principal graphs starting like $\cQ$ or $\cQ'$ at any index other than the $3331$ principal graph.


The rest of this section is dedicated to the proof of Theorem \ref{thm:main}. First, in \S \ref{sec:initial-seeds} we produce an initial list of weeds, which begin with either a triple point or a quadruple point. In \S \ref{sec:triple-points}, we then run the odometer for a single step extending all of the triple point weeds by one depth. We then apply the triple point obstruction from \S \ref{sec:triple-point-obstruction} to rule out many of the resulting weeds. We then run the odometer on all the surviving triple points weeds. In most cases we can iterate this until no more weeds survive, but for one weed with annular multiplicities $10$ (the one responsible for the Haagerup, extended Haagerup, and Asaeda-Haagerup subfactors) we have to stop the odometer by hand, leaving a set of three more complicated weeds. In \S \ref{sec:quadruple-points} we run the odometer on all the quadruple point weeds, stopping again with a more complicated set of weeds. In \S \ref{sec:killing-quadruple-weeds} we rule out certain of these weeds by hand. The full list of vines and weeds in Theorem \ref{thm:main} above is assembled out of all the vines and weeds produced in the various subsections.

\subsection{Initial seeds}
\label{sec:initial-seeds}

\begin{lem}
We have the classification statement
$$((\bigraph{bwd1duals1}, \bigraph{bwd1duals1}), 5, \eset, \cW_1)$$ with
\begin{align*}
\cW_1 = \Big\{ \Gamma_{o1,a} & = \left( \bigraph{bwd1v1v1v1p1duals1v1v1x2}, \bigraph{bwd1v1v1v1p1duals1v1v1x2} \right)  \displaybreak[1] \\
	    \Gamma_{o1,b}& = \left( \bigraph{bwd1v1v1v1p1duals1v1v1x2}, \bigraph{bwd1v1v1v1p1duals1v1v2x1} \right) \displaybreak[1] \\
	   \Gamma_{o1,c} &= \left( \bigraph{bwd1v1v1v1p1duals1v1v2x1}, \bigraph{bwd1v1v1v1p1duals1v1v2x1} \right) \displaybreak[1] \\
	    \Gamma_{e1\phantom{,a}} &= \left( \bigraph{bwd1v1v1p1duals1v1}, \bigraph{bwd1v1v1p1duals1v1} \right) \displaybreak[1] \\
	   \Gamma_{o2,a} &= \left( \bigraph{bwd1v1v1v1p1p1duals1v1v1x2x3}, \bigraph{bwd1v1v1v1p1p1duals1v1v1x2x3} \right) \displaybreak[1] \\
	   \Gamma_{o2,b} &= \left( \bigraph{bwd1v1v1v1p1p1duals1v1v1x2x3}, \bigraph{bwd1v1v1v1p1p1duals1v1v2x1x3} \right) \displaybreak[1] \\
	     \Gamma_{o2,c} &= \left( \bigraph{bwd1v1v1v1p1p1duals1v1v2x1x3}, \bigraph{bwd1v1v1v1p1p1duals1v1v2x1x3} \right) \displaybreak[1] \\
	     \Gamma_{e2\phantom{,a}} &=  \left( \bigraph{bwd1v1v1p1p1duals1v1}, \bigraph{bwd1v1v1p1p1duals1v1} \right) \Big\}
\end{align*}
\end{lem}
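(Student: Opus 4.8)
\emph{Proof sketch.} Unwinding the definition of a classification statement, the assertion is that every subfactor of index in $(4,5)$ whose principal bigraph pair is not $A_\infty$ has principal bigraph pair equal to a translate of an extension of one of the eight pairs in $\cW_1$. I would run the odometer (Theorem~\ref{thm:odometer}) three times starting from the tautological statement $((A_1,A_1),5,\eset,\{(A_1,A_1)\})$, pruning after each step; this amounts to the following case analysis, resting on four facts. \textbf{(a)} The principal bigraph pair of an irreducible subfactor has exactly one vertex at depth $1$, joined to the root by a single edge, so the supertransitivity is at least $1$ and no supertransitivity-$0$ configuration survives. \textbf{(b)} If $v$ is the vertex at which the first branch point or multiple edge of a bigraph occurs, then the graph norm $\rho$ satisfies $\rho^2 \ge \sum_w A_{vw}^2$, so an index in $(4,5)$ (equivalently $\rho\in(2,\sqrt5)$) forbids a multiple edge there and forces the first branch point to have forward degree $2$ or $3$ with simple edges, i.e.\ to be a triple or quadruple point. \textbf{(c)} Theorem~\ref{thm:intermediate} kills supertransitivity $1$. \textbf{(d)} A pure type-$A$ chain has $\rho<2$ hence index $<4$, so every chain (and unequal pair of chains) that arises is an out-of-range vine and is discarded — which is what makes $\cV_1=\eset$, and also lets us drop the leftover chain weed at each stage, since any subfactor of finite supertransitivity starting like it is, after an even translation, an extension of one of the weeds already found.

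Combining (a)--(d): the supertransitivity is at least $2$, and it is finite (else the pair is the excluded $A_\infty$), so translating by an even integer we may assume it equals $2$ or $3$, the first branch point occurring at that depth and being a triple or quadruple point. It remains to record the local picture out to one depth past the branch point. If the supertransitivity is $2$, the branch point is the unique (hence self-dual) vertex at the even depth $2$, and the $2$ or $3$ new vertices lie at the odd depth $3$, where duality is only the implicit vertical-order bijection; this gives exactly $\Gamma_{e1}$ (triple) and $\Gamma_{e2}$ (quadruple). If the supertransitivity is $3$, the branch point is the unique vertex at the odd depth $3$ and the $2$ or $3$ new vertices lie at the even depth $4$, where one must record a duality involution on these vertices in \emph{each} of the two graphs. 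For a triple point the two involutions of two vertices (both fixed, or one transposed pair) give three possibilities for the ordered pair up to exchanging the two graphs — all-self-dual/all-self-dual, all-self-dual/one-pair, one-pair/one-pair — namely $\Gamma_{o1,a}$, $\Gamma_{o1,b}$, $\Gamma_{o1,c}$; for a quadruple point the involutions of three vertices (the identity, or a transposition and a fixed point) give $\Gamma_{o2,a}$, $\Gamma_{o2,b}$, $\Gamma_{o2,c}$ in the same way. Concretely, the odometer produces $(A_2,A_2)$, then $(A_3,A_3)$, then $(A_4,A_4)$ together with $\Gamma_{e1},\Gamma_{e2}$ (kept as weeds, since they already begin with a triple or quadruple point); a further step on $(A_4,A_4)$ yields $\Gamma_{o1,a},\dots,\Gamma_{o2,c}$ and the chain $(A_5,A_5)$, the latter dropped by (d).

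The small-depth enumerations in (a)--(c) are routine. The delicate point is the bookkeeping at depth $4$: pinning down the list of dual-data choices correctly, and noticing that the associativity test of Lemma~\ref{lem:associativity} — which is the obstruction the odometer actually applies during these steps — does \emph{not} rule out $\Gamma_{o1,b}$ or $\Gamma_{o2,b}$ at this depth (these pairs are eliminated only later, by the $\pm1$-eigenspace count of Lemma~\ref{lem:duals}), so they must be retained among the weeds of $\cW_1$.
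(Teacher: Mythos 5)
Your proposal is correct and follows essentially the same route as the paper's own proof: Theorem~\ref{thm:intermediate} to rule out supertransitivity $1$, the norm bound (the $5$-star and the vertex with a single plus a double edge, both of norm $\sqrt5$) to force the first branch to be a simple triple or quadruple point, and the enumeration of duality data at the even depth just past an odd branch, with $\Gamma_{o1,b},\Gamma_{o2,b}$ correctly retained because the associativity test alone does not see Lemma~\ref{lem:duals}. The one explicit step in the paper's proof that you leave implicit is the use of the associativity test (Lemma~\ref{lem:associativity}) to force both graphs in the pair to have the \emph{same} branch valence — automatic for an even branch because the odd-depth vertex counts must agree via the duality bijection, but genuinely needed for an odd branch, where one graph could otherwise have a triple point and the other a quadruple point; this is subsumed by the associativity pruning you invoke, but it is worth stating rather than leaving tacit.
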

\begin{proof}
By Theorem \ref{thm:intermediate}, we can assume that any odd-supertransitive bigraphs with index less than $5$ are at least $3$-supertransitive.  Since the $5$-pointed star and the vertex with a single and double edge coming into it both have norm $5$, it follows that there are only two possibilities at the first non-trivial depth: we have either a vertex of valence $3$ or a vertex of valence $4$.  By the associativity test both graphs must have the same valency.  
\end{proof}

The naming scheme $\Gamma_{xn,m}$ indicates whether the branch point is at an odd or even depth, according to whether $x=o$ or $x=e$, the number $n$ denotes the annular multiplicity  at the depth one past the branch point, and then $m$ is an arbitrary alphabetic index.

We immediately rule out the weeds $\Gamma_{o1,b}$ and $\Gamma_{o2,b}$ on the basis of Lemma \ref{lem:duals}.

The proof of Theorem \ref{thm:main} will follow by successively applying the odometer and removing graphs which fail the obstructions.  For ease of exposition we will split up into cases based on the form of the initial branch point.

\subsection{Triple points}
\label{sec:triple-points}

We now run the odometer for one step on each of $\Gamma_{o1,a}$, $\Gamma_{o1,c}$ and $\Gamma_{e1}$, obtaining 

\begin{lem}
There are classification statements
\begin{align}
\label{eq:o1a}
\Big(\Gamma_{o1,a}, 5, & \left\{\Gamma_{o1,a}\right\}, \\
 \Big\{
    & \left(\bigraph{bwd1v1v1v1p1v1x0p1x0p0x1p0x1duals1v1v1x2}, \bigraph{bwd1v1v1v1p1v1x0p0x1p1x0p0x1duals1v1v1x2}\right), \displaybreak[1]\notag\\
    & \left(\bigraph{bwd1v1v1v1p1v1x0p1x0p1x0duals1v1v1x2}, \bigraph{bwd1v1v1v1p1v1x0p1x0p0x1duals1v1v1x2}\right), \displaybreak[1]\notag\\
    & \left(\bigraph{bwd1v1v1v1p1v1x0p1x0p0x1duals1v1v1x2}, \bigraph{bwd1v1v1v1p1v1x0p0x1p1x0duals1v1v1x2}\right), \displaybreak[1]\notag\\
    & \left(\bigraph{bwd1v1v1v1p1v1x0p1x0duals1v1v1x2}, \bigraph{bwd1v1v1v1p1v1x0p0x1duals1v1v1x2}\right), \displaybreak[1]\notag\\
    & \left(\bigraph{bwd1v1v1v1p1v1x1duals1v1v1x2}, \bigraph{bwd1v1v1v1p1v1x1duals1v1v1x2}\right), \displaybreak[1]\notag\\
    & \left(\bigraph{bwd1v1v1v1p1v1x0p1x0p0x1p0x1duals1v1v1x2}, \bigraph{bwd1v1v1v1p1v1x0p1x0p0x1p0x1duals1v1v1x2}\right), \displaybreak[1]\notag\\
    & \left(\bigraph{bwd1v1v1v1p1v1x0p1x0p1x0duals1v1v1x2}, \bigraph{bwd1v1v1v1p1v1x0p1x0p1x0duals1v1v1x2}\right), \displaybreak[1]\notag\\
    & \left(\bigraph{bwd1v1v1v1p1v1x0p1x0p0x1duals1v1v1x2}, \bigraph{bwd1v1v1v1p1v1x0p1x0p0x1duals1v1v1x2}\right), \displaybreak[1]\notag\\
    & \left(\bigraph{bwd1v1v1v1p1v1x0p1x0duals1v1v1x2}, \bigraph{bwd1v1v1v1p1v1x0p1x0duals1v1v1x2}\right), \displaybreak[1]\notag\\
    & \left(\bigraph{bwd1v1v1v1p1v1x0p0x1duals1v1v1x2}, \bigraph{bwd1v1v1v1p1v1x0p0x1duals1v1v1x2}\right), \displaybreak[1]\notag\\
    & \left(\bigraph{bwd1v1v1v1p1v1x0duals1v1v1x2}, \bigraph{bwd1v1v1v1p1v1x0duals1v1v1x2}\right) \displaybreak[1]\notag
\Big\} \Big), \notag\\
\label{eq:o1c}
\Big(\Gamma_{o1,c}, 5, & \left\{\Gamma_{o1,c}\right\}, \\
 \Big\{
    & \left(\bigraph{bwd1v1v1v1p1v1x0p1x0p0x1p0x1duals1v1v2x1}, \bigraph{bwd1v1v1v1p1v1x0p0x1p1x0p0x1duals1v1v2x1}\right), \displaybreak[1]\notag\\
    & \left(\bigraph{bwd1v1v1v1p1v1x1duals1v1v2x1}, \bigraph{bwd1v1v1v1p1v1x1duals1v1v2x1}\right), \displaybreak[1]\notag\\
    & \left(\bigraph{bwd1v1v1v1p1v1x0p1x0p0x1p0x1duals1v1v2x1}, \bigraph{bwd1v1v1v1p1v1x0p1x0p0x1p0x1duals1v1v2x1}\right), \displaybreak[1]\notag\\
    & \left(\bigraph{bwd1v1v1v1p1v1x0p0x1duals1v1v2x1}, \bigraph{bwd1v1v1v1p1v1x0p0x1duals1v1v2x1}\right) \displaybreak[1]\notag
\Big\} \Big) \notag\\
\intertext{and}
\label{eq:e1}
\Big(\Gamma_{e1}, 5, & \left\{\Gamma_{e1}\right\}, \\
 \Big\{
    & \left(\bigraph{bwd1v1v1p1v1x0p1x0p0x1p0x1duals1v1v4x2x3x1}, \bigraph{bwd1v1v1p1v1x0p1x0p0x1p0x1duals1v1v4x2x3x1}\right), \displaybreak[1]\notag\\
    & \left(\bigraph{bwd1v1v1p1v1x0p1x0p0x1p0x1duals1v1v4x2x3x1}, \bigraph{bwd1v1v1p1v1x1duals1v1v1}\right), \displaybreak[1]\notag\\
    & \left(\bigraph{bwd1v1v1p1v1x0p1x0p0x1duals1v1v3x2x1}, \bigraph{bwd1v1v1p1v1x0p1x0p0x1duals1v1v3x2x1}\right), \displaybreak[1]\notag\\
    & \left(\bigraph{bwd1v1v1p1v1x1duals1v1v1}, \bigraph{bwd1v1v1p1v1x1duals1v1v1}\right), \displaybreak[1]\notag\\
    & \left(\bigraph{bwd1v1v1p1v1x0p1x0p0x1p0x1duals1v1v4x3x2x1}, \bigraph{bwd1v1v1p1v1x0p1x0p0x1p0x1duals1v1v4x3x2x1}\right), \displaybreak[1]\notag\\
    & \left(\bigraph{bwd1v1v1p1v1x0p1x0p0x1p0x1duals1v1v2x1x4x3}, \bigraph{bwd1v1v1p1v1x0p1x0p0x1p0x1duals1v1v2x1x4x3}\right), \displaybreak[1]\notag\\
    & \left(\bigraph{bwd1v1v1p1v1x0p1x0p0x1p0x1duals1v1v2x1x4x3}, \bigraph{bwd1v1v1p1v1x0p1x0p0x1p0x1duals1v1v2x1x3x4}\right), \displaybreak[1]\notag\\
    & \left(\bigraph{bwd1v1v1p1v1x0p1x0p0x1p0x1duals1v1v2x1x4x3}, \bigraph{bwd1v1v1p1v1x0p1x0p0x1p0x1duals1v1v1x2x3x4}\right), \displaybreak[1]\notag\\
    & \left(\bigraph{bwd1v1v1p1v1x0p1x0p0x1p0x1duals1v1v2x1x3x4}, \bigraph{bwd1v1v1p1v1x0p1x0p0x1p0x1duals1v1v2x1x3x4}\right), \displaybreak[1]\notag\\
    & \left(\bigraph{bwd1v1v1p1v1x0p1x0p0x1p0x1duals1v1v2x1x3x4}, \bigraph{bwd1v1v1p1v1x0p1x0p0x1p0x1duals1v1v1x2x4x3}\right), \displaybreak[1]\notag\\
    & \left(\bigraph{bwd1v1v1p1v1x0p1x0p0x1p0x1duals1v1v2x1x3x4}, \bigraph{bwd1v1v1p1v1x0p1x0p0x1p0x1duals1v1v1x2x3x4}\right), \displaybreak[1]\notag\\
    & \left(\bigraph{bwd1v1v1p1v1x0p1x0p0x1p0x1duals1v1v1x2x3x4}, \bigraph{bwd1v1v1p1v1x0p1x0p0x1p0x1duals1v1v1x2x3x4}\right), \displaybreak[1]\notag\\
    & \left(\bigraph{bwd1v1v1p1v1x0p1x0p1x0duals1v1v3x2x1}, \bigraph{bwd1v1v1p1v1x0p1x0p1x0duals1v1v3x2x1}\right), \displaybreak[1]\notag\\
    & \left(\bigraph{bwd1v1v1p1v1x0p1x0p1x0duals1v1v3x2x1}, \bigraph{bwd1v1v1p1v1x0p1x0p1x0duals1v1v1x2x3}\right), \displaybreak[1]\notag\\
    & \left(\bigraph{bwd1v1v1p1v1x0p1x0p1x0duals1v1v1x2x3}, \bigraph{bwd1v1v1p1v1x0p1x0p1x0duals1v1v1x2x3}\right), \displaybreak[1]\notag\\
    & \left(\bigraph{bwd1v1v1p1v1x0p1x0p0x1duals1v1v2x1x3}, \bigraph{bwd1v1v1p1v1x0p1x0p0x1duals1v1v2x1x3}\right), \displaybreak[1]\notag\\
    & \left(\bigraph{bwd1v1v1p1v1x0p1x0p0x1duals1v1v2x1x3}, \bigraph{bwd1v1v1p1v1x0p1x0p0x1duals1v1v1x2x3}\right), \displaybreak[1]\notag\\
    & \left(\bigraph{bwd1v1v1p1v1x0p1x0p0x1duals1v1v1x2x3}, \bigraph{bwd1v1v1p1v1x0p1x0p0x1duals1v1v1x2x3}\right), \displaybreak[1]\notag\\
    & \left(\bigraph{bwd1v1v1p1v1x0p1x0duals1v1v2x1}, \bigraph{bwd1v1v1p1v1x0p1x0duals1v1v2x1}\right), \displaybreak[1]\notag\\
    & \left(\bigraph{bwd1v1v1p1v1x0p1x0duals1v1v2x1}, \bigraph{bwd1v1v1p1v1x0p1x0duals1v1v1x2}\right), \displaybreak[1]\notag\\
    & \left(\bigraph{bwd1v1v1p1v1x0p1x0duals1v1v1x2}, \bigraph{bwd1v1v1p1v1x0p1x0duals1v1v1x2}\right), \displaybreak[1]\notag\\
    & \left(\bigraph{bwd1v1v1p1v1x0p0x1duals1v1v2x1}, \bigraph{bwd1v1v1p1v1x0p0x1duals1v1v2x1}\right), \displaybreak[1]\notag\\
    & \left(\bigraph{bwd1v1v1p1v1x0p0x1duals1v1v1x2}, \bigraph{bwd1v1v1p1v1x0p0x1duals1v1v1x2}\right), \displaybreak[1]\notag\\
    & \left(\bigraph{bwd1v1v1p1v1x0duals1v1v1}, \bigraph{bwd1v1v1p1v1x0duals1v1v1}\right)\displaybreak[1]\notag
\Big\} \Big)
\end{align}
\end{lem}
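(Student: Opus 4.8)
The plan is to read each of the three classification statements off a single application of the odometer meta-theorem. For $\Gamma_0 \in \{\Gamma_{o1,a}, \Gamma_{o1,c}, \Gamma_{e1}\}$ we start from the tautological classification statement $(\Gamma_0, 5, \eset, \{\Gamma_0\})$ of \S\ref{sec:odometer}; Theorem \ref{thm:odometer} upgrades it to $(\Gamma_0, 5, \{\Gamma_0\} \cup \mathbf{L}_5(\Gamma_0), \mathbf{O}_5(\Gamma_0))$, and it then remains to identify the finite sets $\mathbf{O}_5(\Gamma_0)$ and $\mathbf{L}_5(\Gamma_0)$ and to discard from the resulting vine and weed lists exactly those members killed by the associativity obstruction of Lemma \ref{lem:associativity}, checking that what is left agrees with the displayed lists. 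Theorem \ref{thm:intermediate} is what licenses restricting to $3$-supertransitive (for $\Gamma_{o1,a}$, $\Gamma_{o1,c}$) and $2$-supertransitive (for $\Gamma_{e1}$) graphs in the first place, so no further input is needed there.

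For the enumeration step, note the parity bookkeeping. The graphs $\Gamma_{o1,a}$ and $\Gamma_{o1,c}$ carry their triple point at the odd depth $3$ and so have even depth $4$, so adding one depth means adding the odd depth $5$; the implicit odd-vertex duality bijection then forces the two graphs to gain equally many vertices, so there are no unequal extensions and $\mathbf{L}_5(\Gamma_{o1,a}) = \mathbf{L}_5(\Gamma_{o1,c}) = \eset$ (hence the only vine in each of these cases is the promoted old weed $\Gamma_0$). By contrast $\Gamma_{e1}$ has its branch point at the even depth $2$ and odd depth $3$, so we add the even depth $4$, where unequal extensions are allowed and $\mathbf{L}_5(\Gamma_{e1})$ must also be computed. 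In all cases $\mathbf{O}_5(\Gamma_0)$ and $\mathbf{L}_5(\Gamma_0)$ are produced by the finite odometer process of \S\ref{sec:odometer}: the bound $\Lambda = 5$ on the Frobenius-Perron eigenvalue caps the valence of every vertex --- in particular each old vertex at the penultimate new depth can acquire only boundedly many new neighbours --- so one forgets the dual data, runs the row-by-row odometer on the new inclusion matrix with rows taken in lexicographic order, reinstates the dual data (choosing an involution of the new vertices only when the new depth is even, i.e.\ only for $\Gamma_{e1}$), assembles bigraph pairs subject to the rank-matching condition appropriate to the parity of the new depth, and deletes bigraph-pair isomorphs. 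This is exactly what the \code{FindBigraphPairExtensionsUpToDepth} routine does.

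It then remains to apply Lemma \ref{lem:associativity}: for each candidate weed one uses the local form (at least one of the two chosen vertices not at the freshly added depth and at least one not the root), and for each candidate vine the full form (pairs at the largest depth allowed). A direct, completely mechanical check --- small enough to redo by hand in the style of the worked example of \S\ref{sec:running} --- shows that the pre-associativity weed lists collapse to exactly the eleven, four, and twenty-four bigraph pairs displayed, that $\Gamma_{o1,a}$, $\Gamma_{o1,c}$ and $\Gamma_{e1}$ each still pass associativity as vines (so remain the unique vine), and that every unequal extension in $\mathbf{L}_5(\Gamma_{e1})$ fails it (so contributes no vine). This yields the three classification statements.

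No idea beyond Theorem \ref{thm:odometer} and Lemma \ref{lem:associativity} is required; the content, and the only real obstacle, is the finite but sizeable bookkeeping. The points demanding care are that the odometer enumeration be genuinely exhaustive --- the lexicographic-ordering convention must be imposed on only one graph of each pair when the new depth is odd, or pairs differing merely by a reordering of new odd vertices would be wrongly discarded --- and that the associativity cull be done with the correct locality restriction, since using the global version on weeds would illegitimately remove some of them, while forgetting to allow top-depth pairs would fail to certify the vines.
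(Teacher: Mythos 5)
Your proposal is correct and matches the paper's approach exactly: the paper proves this lemma by a single pass of the odometer (one application of Theorem \ref{thm:odometer} to the tautological statement $(\Gamma_0, 5, \emptyset, \{\Gamma_0\})$, followed by removing vines and weeds that fail the associativity test of Lemma \ref{lem:associativity}), which is precisely the mechanical process you describe, including the correct parity bookkeeping for when $\mathbf{L}_5(\Gamma_0)$ can be nonempty. The only imprecision is inessential: Theorem \ref{thm:intermediate} is not actually invoked in proving these three statements --- it only justified the earlier initial-seeds lemma, whereas here the restriction to graphs starting like the given $\Gamma_0$ is already built into the classification-statement format itself.
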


We now apply the triple point obstruction to rule out many of these weeds.

\begin{lem}
There are no subfactors with principal graph pairs starting like any of the last 6 weeds in Equation \eqref{eq:o1a}, any of the last 2 weeds in Equation \eqref{eq:o1c} or any of the last 20 weeds in Equation \eqref{eq:e1}.
\end{lem}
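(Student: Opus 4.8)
The plan is to apply the two local triple point obstructions: Corollary \ref{lem:odd-triple-point-local} to the weeds listed in Equations \eqref{eq:o1a} and \eqref{eq:o1c}, whose initial branch point sits at the odd depth $n=3$, and Corollary \ref{lem:even-triple-point-local} to those in Equation \eqref{eq:e1}, where the branch point is at the even depth $n=2$. For a weed $(\Gamma,\Gamma')$ the recipe is uniform: let $\alpha,\beta$ be the two vertices of $\Gamma$ at depth $n+1$, and, when $n$ is odd, let $\alpha',\beta'$ be the two vertices of $\Gamma'$ at depth $n+1$ matched to $\alpha,\beta$ by the vertical-ordering convention; let $\cS$ be the set of depth-$(n+2)$ vertices of $\Gamma$; let $A_1,A_2\subseteq\cS$ be the vertices adjacent to $\alpha$, resp.\ $\beta$; and let $A_1',A_2'$ be the duals of the depth-$(n+2)$ vertices adjacent to $\alpha$, resp.\ $\beta$ (the duality being the depth-$(n+2)$ involution on $\Gamma$ when $n$ is even, and the $\Gamma\leftrightarrow\Gamma'$ bijection applied to the depth-$(n+2)$ vertices of $\Gamma'$ adjacent to $\alpha'$, resp.\ $\beta'$, when $n$ is odd). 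One always has $\cS=A_1\cup A_2$, since each depth-$(n+2)$ vertex has at least one edge back to depth $n+1$. I would then verify, weed by weed, that $(\cS,A_1,A_2,A_1',A_2')$ is \emph{forbidden}; the cited corollaries then rule the weed out, and since the triple point together with its depth-$(n+2)$ neighbourhood (and the duality on vertices out to that depth) is unchanged by translating and by extending at greater depths, the obstruction applies to everything starting like the weed, which is what the statement asks.

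The reason these particular weeds are the ones eliminated is structural: they are exactly the extensions the odometer produced in which every new depth-$(n+2)$ vertex attaches to only one of the two arms emanating from the triple point, so that $A_1\cap A_2=\emptyset$, and on those new vertices the duality either preserves each arm or exchanges the two arms, so that $\{A_1,A_2\}=\{A_1',A_2'\}$ and hence $A_1'\cap A_2'=\emptyset$ as well --- precisely the definition of forbidden. The complementary weeds, which survive to the next stage, are those in which some depth-$(n+2)$ vertex meets both arms (so $A_1\cap A_2\neq\emptyset$, as happens for the double-edge extensions), or in which the duality carries a vertex adjacent to one arm to a vertex lying on neither arm exclusively (so $\{A_1,A_2\}\neq\{A_1',A_2'\}$); for these the tuple is not forbidden and the obstruction is silent. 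Concretely, carrying this out amounts to reading the five-tuple $(\cS,A_1,A_2,A_1',A_2')$ off each of the $6+2+20$ bigraph strings and comparing against the one-line definition of ``forbidden'', so the natural presentation is a single table with one row per weed.

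The step I expect to be the only real (and still modest) obstacle is not any of these individual checks but confirming that the hypotheses of the corollaries genuinely hold. Corollary \ref{lem:even-triple-point-local} requires \emph{dual} initial triple points --- the neighbours of $V$ must coincide with the duals of the neighbours of $W^*$, and symmetrically --- and although this is forced by the associativity test already applied by the odometer when producing $\cW_1$ and Equations \eqref{eq:o1a}--\eqref{eq:e1}, I would spell it out, taking particular care over the weeds with a double edge or with a nontrivial duality involution at depth $n+1$, so as to be certain that no weed is discarded without warrant and that the dimension-preserving bijection $\cK\to\cL$ underlying the obstruction is indeed available. Beyond that I anticipate no difficulty, since the corollaries are set up precisely to convert the combinatorial fact that $(\cS,A_1,A_2,A_1',A_2')$ is forbidden into the non-existence of the witness vertex demanded by the triple point lemmas, and that combinatorial fact is exactly what each of the weed-by-weed checks establishes.
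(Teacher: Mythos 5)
Your proposal is correct and takes essentially the same route as the paper: the published proof is one sentence, citing Corollary~\ref{lem:odd-triple-point-local} for the weeds in Equations \eqref{eq:o1a} and \eqref{eq:o1c}, Corollary~\ref{lem:even-triple-point-local} for those in Equation \eqref{eq:e1}, and spelling out one representative case ($A_1=A_1'$, $A_2=A_2'$, so forbidden); you've unpacked the same weed-by-weed check, and your structural characterization of which extensions are forbidden matches what the explicit five-tuple computations give.

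One small imprecision to fix before writing the table: the duality you need to track is on the depth-$(n+2)$ vertices, not depth $n+1$; for $n$ even ($\cW_{e1}$) it is the dual-data involution on those depth-$(n+2)$ even vertices of $\Gamma$ alone (so the forbidden check for these weeds depends only on $\Gamma$, and the survivors are exactly the four with duality string \texttt{4x2x3x1} or \texttt{3x2x1} or a double edge), while for $n$ odd ($\cW_{o1,a}$, $\cW_{o1,c}$) it is the $\Gamma\!\leftrightarrow\!\Gamma'$ vertical-ordering bijection on the odd depth-$(n+2)$ vertices, so you need $\Gamma'$ as well. Your flagged concern about verifying the ``dual initial triple points'' hypothesis is the right instinct; in the paper's framework it is discharged by the fact that all weeds in \eqref{eq:o1a}--\eqref{eq:e1} already passed the (local) associativity test when the odometer produced them, which is exactly what forces the neighbour sets to line up as Ocneanu's lemma requires.
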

\begin{proof}
This is an immediate consequence of Corollary \ref{lem:odd-triple-point-local} (for the weeds in Equations \eqref{eq:o1a} and \eqref{eq:o1c}) and of Corollary \ref{lem:even-triple-point-local} (for the weeds in Equation \eqref{eq:e1}). The standard example is the second last pair from Equation \eqref{eq:o1a}, which was treated by Haagerup. In our notation, $A_1 = A_1'$ and $A_2 = A_2'$, so the pair is forbidden. 
\end{proof}

At this point, it is convenient to partition the remaining weeds which begin with a triple point  according to their annular multiplicities. Thus we define
\begin{align*}
\cW_{12}  = \Big\{ & \left(\bigraph{bwd1v1v1v1p1v1x1duals1v1v1x2}, \bigraph{bwd1v1v1v1p1v1x1duals1v1v1x2}\right), \displaybreak[1]\\
    & \left(\bigraph{bwd1v1v1v1p1v1x0p1x0p0x1p0x1duals1v1v1x2}, \bigraph{bwd1v1v1v1p1v1x0p0x1p1x0p0x1duals1v1v1x2}\right), \displaybreak[1]\\
    & \left(\bigraph{bwd1v1v1v1p1v1x1duals1v1v2x1}, \bigraph{bwd1v1v1v1p1v1x1duals1v1v2x1}\right), \displaybreak[1]\\
    & \left(\bigraph{bwd1v1v1v1p1v1x0p1x0p0x1p0x1duals1v1v2x1}, \bigraph{bwd1v1v1v1p1v1x0p0x1p1x0p0x1duals1v1v2x1}\right), \displaybreak[1]\\
    & \left(\bigraph{bwd1v1v1p1v1x1duals1v1v1}, \bigraph{bwd1v1v1p1v1x1duals1v1v1}\right), \displaybreak[1]\\
    & \left(\bigraph{bwd1v1v1p1v1x0p1x0p0x1p0x1duals1v1v4x2x3x1}, \bigraph{bwd1v1v1p1v1x1duals1v1v1}\right), \displaybreak[1]\\
    & \left(\bigraph{bwd1v1v1p1v1x0p1x0p0x1p0x1duals1v1v4x2x3x1}, \bigraph{bwd1v1v1p1v1x0p1x0p0x1p0x1duals1v1v4x2x3x1}\right),
\Big\} \\
\cW_{11}  = \Big\{     & \left(\bigraph{bwd1v1v1v1p1v1x0p1x0p1x0duals1v1v1x2}, \bigraph{bwd1v1v1v1p1v1x0p1x0p0x1duals1v1v1x2}\right), \displaybreak[1]\\
    & \left(\bigraph{bwd1v1v1v1p1v1x0p1x0p0x1duals1v1v1x2}, \bigraph{bwd1v1v1v1p1v1x0p0x1p1x0duals1v1v1x2}\right), \displaybreak[1]\\
    & \left(\bigraph{bwd1v1v1p1v1x0p1x0p0x1duals1v1v3x2x1}, \bigraph{bwd1v1v1p1v1x0p1x0p0x1duals1v1v3x2x1}\right)\Big\}, \\
\intertext{and}
\cW_{10}  = \Big\{     & \left(\bigraph{bwd1v1v1v1p1v1x0p1x0duals1v1v1x2}, \bigraph{bwd1v1v1v1p1v1x0p0x1duals1v1v1x2}\right)\Big\}
\end{align*}
We deal with these sets in the next three subsections.

\subsubsection{Annular multiplicities $12$}
For each of the weeds in $\cW_{12}$, we run the odometer one more step and find that there are no remaining weeds. None of the bigraph pairs in $\cW_{12}$ have a equal or unequal extensions, and only the bigraph pairs
\begin{align*}
\Big(
& \left(\bigraph{bwd1v1v1v1p1v1x1duals1v1v1x2}, \bigraph{bwd1v1v1v1p1v1x1duals1v1v1x2}\right), \\
& \left(\bigraph{bwd1v1v1v1p1v1x1duals1v1v2x1}, \bigraph{bwd1v1v1v1p1v1x1duals1v1v2x1}\right), \\
& \left(\bigraph{bwd1v1v1p1v1x1duals1v1v1}, \bigraph{bwd1v1v1p1v1x1duals1v1v1}\right),  \\
& \left(\bigraph{bwd1v1v1p1v1x0p1x0p0x1p0x1duals1v1v4x2x3x1}, \bigraph{bwd1v1v1p1v1x1duals1v1v1}\right) \Big)
\end{align*}
survive the associativity test, so these are the only vines with annular multiplicities $12$.

\subsubsection{Annular multiplicities $11$}
For each of the weeds in $\cW_{11}$, we can run the odometer, eventually removing all weeds. 
We have the following classification statements:

\begin{align*}
\left( \left(\bigraph{bwd1v1v1v1p1v1x0p1x0p1x0duals1v1v1x2}, \bigraph{bwd1v1v1v1p1v1x0p1x0p0x1duals1v1v1x2}\right), 5, \cV_{11,a}, \eset \right), \\
\left(\left(\bigraph{bwd1v1v1v1p1v1x0p1x0p0x1duals1v1v1x2}, \bigraph{bwd1v1v1v1p1v1x0p0x1p1x0duals1v1v1x2}\right), 5, \cV_{11,b} , \eset\right), \\
\intertext{and}
\left( \left(\bigraph{bwd1v1v1p1v1x0p1x0p0x1duals1v1v3x2x1}, \bigraph{bwd1v1v1p1v1x0p1x0p0x1duals1v1v3x2x1}\right), 5, \cV_{11,c}, \eset \right)
\end{align*}
where
\begin{align*}
\cV_{11,a} & = \Big\{ \left(\bigraph{bwd1v1v1v1p1v1x0p1x0p0x1v1x0x0p0x1x0p0x0x1p0x0x1duals1v1v1x2v4x3x2x1}, \bigraph{bwd1v1v1v1p1v1x0p1x0p1x0duals1v1v1x2}\right) \Big\}, \\
\cV_{11,b} & = \Big\{ \left(\bigraph{bwd1v1v1v1p1v1x0p1x0p0x1v1x0x0p0x0x1v0x1duals1v1v1x2v2x1}, \bigraph{bwd1v1v1v1p1v1x0p0x1p1x0v1x0x0p0x1x0v0x1duals1v1v1x2v2x1}\right), \\
 & \qquad\left(\bigraph{bwd1v1v1v1p1v1x0p1x0p0x1v1x0x0p0x1x0p0x0x1v0x1x0p0x0x1v1x0p0x1v0x1v1duals1v1v1x2v3x2x1v2x1v1}, \bigraph{bwd1v1v1v1p1v1x0p0x1p1x0v1x0x0p0x1x0p0x1x0v1x0x0p0x0x1v1x0v1duals1v1v1x2v3x2x1v1}\right) \Big\}, \\
 \intertext{and}
 \cV_{11,c} & = \Big\{ \left(\bigraph{bwd1v1v1p1v1x0p1x0p0x1v0x1x0p0x0x1v0x1duals1v1v3x2x1v1}, \bigraph{bwd1v1v1p1v1x0p1x0p0x1v0x0x1p0x1x0v1x0duals1v1v3x2x1v1}\right) \Big\}.
 \end{align*}

See Figures \ref{fig:odometer-11,a}, \ref{fig:odometer-11,b} and \ref{fig:odometer-11,c} for the detailed output of the odometer in each of these three cases.

\begin{figure}[!htb]
$$
\begin{tikzpicture}
\tikzset{grow=right,level distance=130pt}
\tikzset{every tree node/.style={draw,fill=white,rectangle,rounded corners,inner sep=2pt}}
\Tree
[.\node{$\!\!\begin{array}{c}\bigraph{bwd1v1v1v1p1v1x0p1x0p1x0duals1v1v1x2}\\\bigraph{bwd1v1v1v1p1v1x0p1x0p0x1duals1v1v1x2}\end{array}\!\!$};]
\end{tikzpicture}
$$
\caption{The odometer, running on $\Gamma_{11,a}$.}
\label{fig:odometer-11,a}
\end{figure}

\begin{figure}[!htb]
$$
\hspace{-2cm}
\scalebox{0.72}{
\begin{tikzpicture}
\tikzset{grow=right,level distance=165pt}
\tikzset{every tree node/.style={draw,fill=white,rectangle,rounded corners,inner sep=2pt}}
\Tree
[.\node{$\!\!\begin{array}{c}\bigraph{bwd1v1v1v1p1v1x0p1x0p0x1duals1v1v1x2}\\\bigraph{bwd1v1v1v1p1v1x0p0x1p1x0duals1v1v1x2}\end{array}\!\!$};
	[.\node{$\!\!\begin{array}{c}\bigraph{bwd1v1v1v1p1v1x0p1x0p0x1v1x0x0p0x0x1duals1v1v1x2v2x1}\\\bigraph{bwd1v1v1v1p1v1x0p0x1p1x0v1x0x0p0x1x0duals1v1v1x2v2x1}\end{array}\!\!$};
		[.\node{$\!\!\begin{array}{c}\bigraph{bwd1v1v1v1p1v1x0p1x0p0x1v1x0x0p0x0x1v0x1duals1v1v1x2v2x1}\\\bigraph{bwd1v1v1v1p1v1x0p0x1p1x0v1x0x0p0x1x0v0x1duals1v1v1x2v2x1}\end{array}\!\!$};]]
	[.\node{$\!\!\begin{array}{c}\bigraph{bwd1v1v1v1p1v1x0p1x0p0x1v1x0x0p0x1x0p0x0x1duals1v1v1x2v3x2x1}\\\bigraph{bwd1v1v1v1p1v1x0p0x1p1x0v1x0x0p0x1x0p0x1x0duals1v1v1x2v3x2x1}\end{array}\!\!$};
		[.\node{$\!\!\begin{array}{c}\bigraph{bwd1v1v1v1p1v1x0p1x0p0x1v1x0x0p0x1x0p0x0x1v0x1x0p0x0x1duals1v1v1x2v3x2x1}\\\bigraph{bwd1v1v1v1p1v1x0p0x1p1x0v1x0x0p0x1x0p0x1x0v1x0x0p0x0x1duals1v1v1x2v3x2x1}\end{array}\!\!$};
			[.\node{$\!\!\begin{array}{c}\bigraph{bwd1v1v1v1p1v1x0p1x0p0x1v1x0x0p0x1x0p0x0x1v0x1x0p0x0x1v1x0p0x1duals1v1v1x2v3x2x1v2x1}\\\bigraph{bwd1v1v1v1p1v1x0p0x1p1x0v1x0x0p0x1x0p0x1x0v1x0x0p0x0x1v1x0duals1v1v1x2v3x2x1v1}\end{array}\!\!$};
				[.\node{$\!\!\begin{array}{c}\bigraph{bwd1v1v1v1p1v1x0p1x0p0x1v1x0x0p0x1x0p0x0x1v0x1x0p0x0x1v1x0p0x1v0x1duals1v1v1x2v3x2x1v2x1}\\\bigraph{bwd1v1v1v1p1v1x0p0x1p1x0v1x0x0p0x1x0p0x1x0v1x0x0p0x0x1v1x0v1duals1v1v1x2v3x2x1v1}\end{array}\!\!$};]]]
		[.\node{$\!\!\begin{array}{c}\bigraph{bwd1v1v1v1p1v1x0p1x0p0x1v1x0x0p0x1x0p0x0x1v0x1x0p0x1x0p0x0x1duals1v1v1x2v3x2x1}\\\bigraph{bwd1v1v1v1p1v1x0p0x1p1x0v1x0x0p0x1x0p0x1x0v1x0x0p0x1x0p0x0x1duals1v1v1x2v3x2x1}\end{array}\!\!$};]]
	[.\node{$\!\!\begin{array}{c}\bigraph{bwd1v1v1v1p1v1x0p1x0p0x1v1x0x0p1x0x0p0x0x1duals1v1v1x2v3x2x1}\\\bigraph{bwd1v1v1v1p1v1x0p0x1p1x0v1x0x0p1x0x0p0x1x0duals1v1v1x2v3x2x1}\end{array}\!\!$};]
	[.\node{$\!\!\begin{array}{c}\bigraph{bwd1v1v1v1p1v1x0p1x0p0x1v1x0x0p0x1x0p0x0x1p0x0x1duals1v1v1x2v4x2x3x1}\\\bigraph{bwd1v1v1v1p1v1x0p0x1p1x0v1x0x0p0x1x0p0x1x0p0x0x1duals1v1v1x2v3x2x1x4}\end{array}\!\!$};]]
\end{tikzpicture}
}
$$
\caption{The odometer, running on $\Gamma_{11,b}$.}
\label{fig:odometer-11,b}
\end{figure}

\begin{figure}[!htb]
$$
\begin{tikzpicture}
\tikzset{grow=right,level distance=130pt}
\tikzset{every tree node/.style={draw,fill=white,rectangle,rounded corners,inner sep=2pt}}
\Tree
[.\node{$\!\!\begin{array}{c}\bigraph{bwd1v1v1p1v1x0p1x0p0x1duals1v1v3x2x1}\\\bigraph{bwd1v1v1p1v1x0p1x0p0x1duals1v1v3x2x1}\end{array}\!\!$};
	[.\node{$\!\!\begin{array}{c}\bigraph{bwd1v1v1p1v1x0p1x0p0x1v0x1x0p0x0x1duals1v1v3x2x1}\\\bigraph{bwd1v1v1p1v1x0p1x0p0x1v0x0x1p0x1x0duals1v1v3x2x1}\end{array}\!\!$};
		[.\node{$\!\!\begin{array}{c}\bigraph{bwd1v1v1p1v1x0p1x0p0x1v0x1x0p0x0x1v0x1duals1v1v3x2x1v1}\\\bigraph{bwd1v1v1p1v1x0p1x0p0x1v0x0x1p0x1x0v1x0duals1v1v3x2x1v1}\end{array}\!\!$};]]
	[.\node{$\!\!\begin{array}{c}\bigraph{bwd1v1v1p1v1x0p1x0p0x1v0x1x0p0x1x0p0x0x1duals1v1v3x2x1}\\\bigraph{bwd1v1v1p1v1x0p1x0p0x1v0x1x0p0x0x1p0x1x0duals1v1v3x2x1}\end{array}\!\!$};
		[.\node{$\!\!\begin{array}{c}\bigraph{bwd1v1v1p1v1x0p1x0p0x1v0x1x0p0x1x0p0x0x1v1x0x0p0x0x1p0x0x1duals1v1v3x2x1v3x2x1}\\\bigraph{bwd1v1v1p1v1x0p1x0p0x1v0x1x0p0x0x1p0x1x0v1x0x0p0x1x0p0x1x0duals1v1v3x2x1v3x2x1}\end{array}\!\!$};]]
	[.\node{$\!\!\begin{array}{c}\bigraph{bwd1v1v1p1v1x0p1x0p0x1v1x0x0p0x1x0p0x0x1p0x0x1duals1v1v3x2x1}\\\bigraph{bwd1v1v1p1v1x0p1x0p0x1v1x0x0p0x0x1p0x1x0p0x0x1duals1v1v3x2x1}\end{array}\!\!$};
		[.\node{$\!\!\begin{array}{c}\bigraph{bwd1v1v1p1v1x0p1x0p0x1v1x0x0p0x1x0p0x0x1p0x0x1v1x0x0x0p0x1x0x0p0x0x1x0duals1v1v3x2x1v2x1x3}\\\bigraph{bwd1v1v1p1v1x0p1x0p0x1v1x0x0p0x0x1p0x1x0p0x0x1v1x0x0x0p0x1x0x0p0x0x1x0duals1v1v3x2x1v3x2x1}\end{array}\!\!$};]]]
\end{tikzpicture}
$$
\caption{The odometer, running on $\Gamma_{11,c}$.}
\label{fig:odometer-11,c}
\end{figure}

\subsubsection{Annular multiplicities $10$}
The single weed in $\cW_{10}$ causes more difficulty. It appears that running the odometer never terminates --- weeds with arbitrarily high depth appear. Thus, we choose a certain set of weeds to terminate at, balancing the desire for a small list of weeds which are as deep as possible with the desire for a small list of vines. The particular choices we've made were influenced by our expectation of various methods eliminating vines or weeds in subsequent papers, in particular the quadratic tangles methods which we will use to eliminate certain weeds with annular multiplicities $10$.

\begin{thm}
There is a classification statement
\begin{equation*}
\left( \left(\bigraph{bwd1v1v1v1p1v1x0p1x0duals1v1v1x2}, \bigraph{bwd1v1v1v1p1v1x0p0x1duals1v1v1x2}\right), 5, \cV_{10}, \cW_{10} \right)
\end{equation*}
where
\begin{align*}
\cV_{10} & = \Big\{ \left(\bigraph{bwd1v1v1v1p1v1x0p0x1v1x0p0x1duals1v1v1x2v2x1}, \bigraph{bwd1v1v1v1p1v1x0p1x0duals1v1v1x2}\right), \displaybreak[1]\\
 & \qquad\left(\bigraph{bwd1v1v1v1p1v1x0p1x0v1x0p0x1duals1v1v1x2v1x2}, \bigraph{bwd1v1v1v1p1v1x0p0x1v1x1duals1v1v1x2v1}\right), \displaybreak[1]\\
 & \qquad\left(\bigraph{bwd1v1v1v1p1v1x0p1x0v1x0p1x0p0x1duals1v1v1x2v1x2x3}, \bigraph{bwd1v1v1v1p1v1x0p0x1v1x1p1x0duals1v1v1x2v1x2}\right), \displaybreak[1]\\
 & \qquad\left(\bigraph{bwd1v1v1v1p1v1x0p1x0v1x0p1x0p0x1duals1v1v1x2v2x1x3}, \bigraph{bwd1v1v1v1p1v1x0p0x1v1x1p1x0duals1v1v1x2v1x2}\right), \displaybreak[1]\\
 & \qquad\left(\bigraph{bwd1v1v1v1p1v1x0p1x0v1x0p1x0p0x1v1x0x0duals1v1v1x2v1x2x3}, \bigraph{bwd1v1v1v1p1v1x0p0x1v1x1p1x0v0x1duals1v1v1x2v1x2}\right), \displaybreak[1]\\
 & \qquad\left(\bigraph{bwd1v1v1v1p1v1x0p0x1v1x1v1v1duals1v1v1x2v1v1}, \bigraph{bwd1v1v1v1p1v1x0p1x0v1x0p0x1v1x1duals1v1v1x2v1x2}\right), \displaybreak[1]\\
 & \qquad\left(\bigraph{bwd1v1v1v1p1v1x0p0x1v1x0p1x0p0x1v0x0x1v1duals1v1v1x2v3x2x1v1}, \bigraph{bwd1v1v1v1p1v1x0p1x0v1x0v1duals1v1v1x2v1}\right), \displaybreak[1]\\
 & \qquad\left(\bigraph{bwd1v1v1v1p1v1x0p0x1v1x0p1x0p0x1v0x1x0p0x1x0p0x0x1v1x0x0p0x1x0p0x0x1p0x0x1p0x0x1duals1v1v1x2v3x2x1v5x4x3x2x1}, \bigraph{bwd1v1v1v1p1v1x0p1x0v1x0v1p1p1duals1v1v1x2v1}\right), \displaybreak[1]\\
 & \qquad\left(\bigraph{bwd1v1v1v1p1v1x0p0x1v1x0p1x0p1x0p0x1v0x0x0x1p0x0x0x1v1x0p0x1duals1v1v1x2v4x2x3x1v1x2}, \bigraph{bwd1v1v1v1p1v1x0p1x0v1x0p1x0v1x0p0x1duals1v1v1x2v1x2}\right), \displaybreak[1]\\
 & \qquad\left(\bigraph{bwd1v1v1v1p1v1x0p0x1v1x0p1x0p1x0p0x1v0x0x0x1p0x0x0x1v1x0p0x1duals1v1v1x2v4x2x3x1v2x1}, \bigraph{bwd1v1v1v1p1v1x0p1x0v1x0p1x0v1x0p0x1duals1v1v1x2v2x1}\right), \displaybreak[1]\\
 & \qquad\left(\bigraph{bwd1v1v1v1p1v1x0p0x1v1x0p1x0p1x0p0x1v0x0x0x1p0x0x0x1v1x0p0x1duals1v1v1x2v4x3x2x1v1x2}, \bigraph{bwd1v1v1v1p1v1x0p1x0v1x0p1x0v1x0p0x1duals1v1v1x2v1x2}\right), \displaybreak[1]\\
 & \qquad\left(\bigraph{bwd1v1v1v1p1v1x0p0x1v1x0p1x0p1x0p0x1v0x0x0x1p0x0x0x1v1x0p0x1duals1v1v1x2v4x3x2x1v2x1}, \bigraph{bwd1v1v1v1p1v1x0p1x0v1x0p1x0v1x0p0x1duals1v1v1x2v2x1}\right), \displaybreak[1]\\
 & \qquad\left(\bigraph{bwd1v1v1v1p1v1x0p1x0v1x0p0x1v1x0p0x1v1x1duals1v1v1x2v1x2v1}, \bigraph{bwd1v1v1v1p1v1x0p0x1v1x0p1x0p0x1p0x1v0x0x0x1p1x0x0x0v1x0p1x0p0x1p0x1duals1v1v1x2v4x2x3x1v1x2x3x4}\right), \displaybreak[1]\\
 & \qquad\left(\bigraph{bwd1v1v1v1p1v1x0p1x0v1x0p0x1v1x0p0x1v1x1duals1v1v1x2v1x2v1}, \bigraph{bwd1v1v1v1p1v1x0p0x1v1x0p1x0p0x1p0x1v0x0x0x1p1x0x0x0v1x0p1x0p0x1p0x1duals1v1v1x2v4x2x3x1v2x1x3x4}\right), \displaybreak[1]\\
 & \qquad\left(\bigraph{bwd1v1v1v1p1v1x0p1x0v1x0p0x1v1x0p0x1v1x1duals1v1v1x2v1x2v1}, \bigraph{bwd1v1v1v1p1v1x0p0x1v1x0p1x0p0x1p0x1v0x0x0x1p1x0x0x0v1x0p1x0p0x1p0x1duals1v1v1x2v4x2x3x1v2x1x4x3}\right), \displaybreak[1]\\
 & \qquad\left(\bigraph{bwd1v1v1v1p1v1x0p1x0v1x0p0x1v1x0p0x1v1x0p1x0p0x1p0x1duals1v1v1x2v1x2v4x3x2x1}, \bigraph{bwd1v1v1v1p1v1x0p0x1v1x0p1x0p0x1p0x1v0x0x0x1p1x0x0x0v1x1duals1v1v1x2v4x2x3x1v1}\right), \displaybreak[1]\\
 & \qquad\left(\bigraph{bwd1v1v1v1p1v1x0p0x1v1x0p1x0p0x1p0x1v0x0x0x1p1x0x0x0v1x1v1v1duals1v1v1x2v4x2x3x1v1v1}, \bigraph{bwd1v1v1v1p1v1x0p1x0v1x0p0x1v1x0p0x1v1x0p1x0p0x1p0x1v1x0x0x1duals1v1v1x2v1x2v4x3x2x1}\right), \displaybreak[1]\\
 & \qquad\left(\bigraph{bwd1v1v1v1p1v1x0p1x0v1x0p0x1v1x0p0x1v1x1v1v1duals1v1v1x2v1x2v1v1}, \bigraph{bwd1v1v1v1p1v1x0p0x1v1x0p1x0p0x1p0x1v0x0x0x1p1x0x0x0v1x0p1x0p0x1p0x1v1x0x1x0duals1v1v1x2v4x2x3x1v1x2x3x4}\right) \Big\}\displaybreak[1]\\
 \intertext{and}
\cW_{10} & = \Big\{ \left(\bigraph{bwd1v1v1v1p1v1x0p1x0v1x0v1p1duals1v1v1x2v1}, \bigraph{bwd1v1v1v1p1v1x0p0x1v1x0p1x0p0x1v0x1x0p0x0x1duals1v1v1x2v3x2x1}\right), \displaybreak[1]\\
 & \qquad\left(\bigraph{bwd1v1v1v1p1v1x0p1x0v1x0p0x1v1x0p1x0p0x1v0x1x0p0x0x1v1x0p0x1p0x1duals1v1v1x2v1x2v2x1}, \bigraph{bwd1v1v1v1p1v1x0p0x1v1x0p1x0p0x1p0x1v0x1x0x0p0x0x0x1p1x0x0x0v1x0x0p0x1x0p0x1x0p0x0x1v0x0x0x1p1x0x0x0p0x1x0x0duals1v1v1x2v4x2x3x1v3x2x1x4}\right), \displaybreak[1]\\
 & \qquad\left(\bigraph{bwd1v1v1v1p1v1x0p1x0v1x0p0x1v1x0p0x1v1x0p0x1v1x0p0x1duals1v1v1x2v1x2v2x1}, \bigraph{bwd1v1v1v1p1v1x0p0x1v1x0p1x0p0x1p0x1v0x0x0x1p1x0x0x0v1x0p0x1v0x1p1x0duals1v1v1x2v4x2x3x1v1x2}\right) \Big\}
\end{align*}
\end{thm}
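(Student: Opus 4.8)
The plan is to start from the tautological classification statement $\left(\Gamma_0, 5, \eset, \{\Gamma_0\}\right)$, where $\Gamma_0$ is the unique bigraph pair with annular multiplicities $10$ appearing in the statement (the single element of the set $\cW_{10}$ produced in \S\ref{sec:triple-points}), and then to run the odometer repeatedly in the sense of \S\ref{sec:running}. Each step applies Theorem \ref{thm:odometer} to extend every current weed $W$ by one depth --- forming the sets $\mathbf{O}_5(W)$ of depth-one extensions, together with $\mathbf{L}_5(W)$ when $W$ has odd depth, as in \S\ref{sec:odometer} --- then discards every extension failing one of the local obstructions of Sections \ref{sec:triple-point-obstruction}--\ref{sec:duals}: the associativity test (Lemma \ref{lem:associativity}), the odd and even triple-point obstructions (Corollaries \ref{lem:odd-triple-point-local} and \ref{lem:even-triple-point-local}), the depth-$(n+1)$ duals count (Lemma \ref{lem:duals}), and the even quadruple-point obstruction (Theorem \ref{thm:evenquad}); promotes the old weed together with the surviving elements of $\mathbf{L}_5(W)$ to vines; and keeps the surviving elements of $\mathbf{O}_5(W)$ as the new weeds. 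Accumulating all the vines produced over all iterations is what will yield the set $\cV_{10}$.

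Because $\Gamma_0$ has an initial triple point at an odd depth, I expect the triple-point obstruction --- Corollary \ref{lem:odd-triple-point-local} at the first step and Corollary \ref{lem:even-triple-point-local} at the even-depth extensions that follow --- to do the bulk of the pruning in the first few steps, with associativity, supplemented by occasional applications of Lemma \ref{lem:duals} and Theorem \ref{thm:evenquad}, handling the rest. For the first couple of steps one checks directly that the only extensions surviving all obstructions are the ones leading towards the pairs listed in $\cV_{10}$ and $\cW_{10}$; beyond that the enumeration is best carried out with the \code{FusionAtlas`} package and cross-checked with the \code{Scala} reimplementation, with the branching recorded in a figure of the kind accompanying the annular multiplicity $11$ cases so that it can be verified by hand.

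The hard part is that, unlike every case treated above, the odometer here does \emph{not} terminate: $\Gamma_0$ is precisely the weed responsible for the Haagerup, extended Haagerup and Asaeda-Haagerup subfactors, and weeds of arbitrarily large depth keep appearing --- these correspond to the ever-longer translates of the Haagerup-type vines, which none of our local obstructions can rule out. So rather than running until no weeds remain, one must choose, by hand, a finite stopping set of weeds and declare the odometer finished there. I would take this stopping set to be the three bigraph pairs $\cC$, $\cF$, $\cB$ listed in $\cW_{10}$: they are deep enough that the vine list $\cV_{10}$ is finite and manageable and that the quadratic-tangles methods of the sequels can later be brought to bear on each of them (all three still carry an initial triple point at the depth where we stop), yet shallow enough that the vine list does not explode. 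The remaining content of the proof is then the (computer-assisted, and cross-checked) verification that with this choice every branch of the odometer tree rooted at $\Gamma_0$ either terminates after finitely many steps in a translate of a pair in $\cV_{10}$ or reaches one of $\cC$, $\cF$, $\cB$, and that no obstruction has been overlooked along the way --- which is exactly what iterating Theorem \ref{thm:odometer}, interleaved with the obstruction tests of Sections \ref{sec:triple-point-obstruction}--\ref{sec:duals}, delivers.
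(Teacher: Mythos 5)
Your proposal is correct and follows essentially the same approach as the paper: starting from the tautological classification statement for $\Gamma_{10}$, run the odometer of Theorem \ref{thm:odometer}, and --- since the iteration never terminates --- hand-pick the three weeds $\cC$, $\cF$, $\cB$ as the stopping set, as recorded in Figure \ref{fig:odometer-10}. One small remark: the paper's ``running the odometer'' (\S\ref{sec:running}) interleaves only the associativity test with Theorem \ref{thm:odometer}; the triple-point obstruction was already used up in selecting $\Gamma_{10}$ at supertransitivity $3$, and Lemma \ref{lem:duals} and Theorem \ref{thm:evenquad} are vacuous for its extensions, so the additional obstructions you invoke at each step are harmless but do no extra work here.
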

\begin{proof}
See Figure \ref{fig:odometer-10}.
\end{proof}

\begin{figure}[!htb]
$$
\hspace{-1cm}
\scalebox{0.60}{
\begin{tikzpicture}
\tikzset{grow=right,level distance=170pt}
\tikzset{every tree node/.style={draw,fill=white,rectangle,rounded corners,inner sep=2pt}}
\Tree
[.\node{$\!\!\begin{array}{c}\bigraph{bwd1v1v1v1p1v1x0p1x0duals1v1v1x2}\\\bigraph{bwd1v1v1v1p1v1x0p0x1duals1v1v1x2}\end{array}\!\!$};
	[.\node{$\!\!\begin{array}{c}\bigraph{bwd1v1v1v1p1v1x0p1x0v1x0duals1v1v1x2v1}\\\bigraph{bwd1v1v1v1p1v1x0p0x1v1x0p1x0p0x1duals1v1v1x2v3x2x1}\end{array}\!\!$};
		[.\node{$\!\!\begin{array}{c}\bigraph{bwd1v1v1v1p1v1x0p1x0v1x0v1duals1v1v1x2v1}\\\bigraph{bwd1v1v1v1p1v1x0p0x1v1x0p1x0p0x1v0x0x1duals1v1v1x2v3x2x1}\end{array}\!\!$};]
		[.\node[fill=red!30]{$\!\!\begin{array}{c}\bigraph{bwd1v1v1v1p1v1x0p1x0v1x0v1p1duals1v1v1x2v1}\\\bigraph{bwd1v1v1v1p1v1x0p0x1v1x0p1x0p0x1v0x1x0p0x0x1duals1v1v1x2v3x2x1}\end{array}\!\!$};]
		[.\node{$\!\!\begin{array}{c}\bigraph{bwd1v1v1v1p1v1x0p1x0v1x0v1p1p1duals1v1v1x2v1}\\\bigraph{bwd1v1v1v1p1v1x0p0x1v1x0p1x0p0x1v0x1x0p0x1x0p0x0x1duals1v1v1x2v3x2x1}\end{array}\!\!$};]]
	[.\node{$\!\!\begin{array}{c}\bigraph{bwd1v1v1v1p1v1x0p1x0v1x0p0x1duals1v1v1x2v1x2}\\\bigraph{bwd1v1v1v1p1v1x0p0x1v1x1duals1v1v1x2v1}\end{array}\!\!$};
		[.\node{$\!\!\begin{array}{c}\bigraph{bwd1v1v1v1p1v1x0p1x0v1x0p0x1v1x1duals1v1v1x2v1x2}\\\bigraph{bwd1v1v1v1p1v1x0p0x1v1x1v1duals1v1v1x2v1}\end{array}\!\!$};]]
	[.\node{$\!\!\begin{array}{c}\bigraph{bwd1v1v1v1p1v1x0p1x0v1x0p1x0duals1v1v1x2v1x2}\\\bigraph{bwd1v1v1v1p1v1x0p0x1v1x0p1x0p1x0p0x1duals1v1v1x2v4x2x3x1}\end{array}\!\!$};
		[.\node{$\!\!\begin{array}{c}\bigraph{bwd1v1v1v1p1v1x0p1x0v1x0p1x0v1x0p0x1duals1v1v1x2v1x2}\\\bigraph{bwd1v1v1v1p1v1x0p0x1v1x0p1x0p1x0p0x1v0x0x0x1p0x0x0x1duals1v1v1x2v4x2x3x1}\end{array}\!\!$};]]
	[.\node{$\!\!\begin{array}{c}\bigraph{bwd1v1v1v1p1v1x0p1x0v1x0p1x0duals1v1v1x2v1x2}\\\bigraph{bwd1v1v1v1p1v1x0p0x1v1x0p1x0p1x0p0x1duals1v1v1x2v4x3x2x1}\end{array}\!\!$};
		[.\node{$\!\!\begin{array}{c}\bigraph{bwd1v1v1v1p1v1x0p1x0v1x0p1x0v1x0p0x1duals1v1v1x2v1x2}\\\bigraph{bwd1v1v1v1p1v1x0p0x1v1x0p1x0p1x0p0x1v0x0x0x1p0x0x0x1duals1v1v1x2v4x3x2x1}\end{array}\!\!$};]]
	[.\node{$\!\!\begin{array}{c}\bigraph{bwd1v1v1v1p1v1x0p1x0v1x0p1x0duals1v1v1x2v2x1}\\\bigraph{bwd1v1v1v1p1v1x0p0x1v1x0p1x0p1x0p0x1duals1v1v1x2v4x2x3x1}\end{array}\!\!$};
		[.\node{$\!\!\begin{array}{c}\bigraph{bwd1v1v1v1p1v1x0p1x0v1x0p1x0v1x0p0x1duals1v1v1x2v2x1}\\\bigraph{bwd1v1v1v1p1v1x0p0x1v1x0p1x0p1x0p0x1v0x0x0x1p0x0x0x1duals1v1v1x2v4x2x3x1}\end{array}\!\!$};]]
	[.\node{$\!\!\begin{array}{c}\bigraph{bwd1v1v1v1p1v1x0p1x0v1x0p1x0duals1v1v1x2v2x1}\\\bigraph{bwd1v1v1v1p1v1x0p0x1v1x0p1x0p1x0p0x1duals1v1v1x2v4x3x2x1}\end{array}\!\!$};
		[.\node{$\!\!\begin{array}{c}\bigraph{bwd1v1v1v1p1v1x0p1x0v1x0p1x0v1x0p0x1duals1v1v1x2v2x1}\\\bigraph{bwd1v1v1v1p1v1x0p0x1v1x0p1x0p1x0p0x1v0x0x0x1p0x0x0x1duals1v1v1x2v4x3x2x1}\end{array}\!\!$};]]
	[.\node{$\!\!\begin{array}{c}\bigraph{bwd1v1v1v1p1v1x0p1x0v1x0p0x1duals1v1v1x2v1x2}\\\bigraph{bwd1v1v1v1p1v1x0p0x1v1x0p1x0p0x1p0x1duals1v1v1x2v4x2x3x1}\end{array}\!\!$};
		[.\node{$\!\!\begin{array}{c}\bigraph{bwd1v1v1v1p1v1x0p1x0v1x0p0x1v1x0p0x1duals1v1v1x2v1x2}\\\bigraph{bwd1v1v1v1p1v1x0p0x1v1x0p1x0p0x1p0x1v0x0x0x1p1x0x0x0duals1v1v1x2v4x2x3x1}\end{array}\!\!$};
			[.\node{$\!\!\begin{array}{c}\bigraph{bwd1v1v1v1p1v1x0p1x0v1x0p0x1v1x0p0x1v1x0p0x1duals1v1v1x2v1x2v2x1}\\\bigraph{bwd1v1v1v1p1v1x0p0x1v1x0p1x0p0x1p0x1v0x0x0x1p1x0x0x0v1x0p0x1duals1v1v1x2v4x2x3x1v1x2}\end{array}\!\!$};
				[.\node[fill=red!30]{$\!\!\begin{array}{c}\bigraph{bwd1v1v1v1p1v1x0p1x0v1x0p0x1v1x0p0x1v1x0p0x1v1x0p0x1duals1v1v1x2v1x2v2x1}\\\bigraph{bwd1v1v1v1p1v1x0p0x1v1x0p1x0p0x1p0x1v0x0x0x1p1x0x0x0v1x0p0x1v0x1p1x0duals1v1v1x2v4x2x3x1v1x2}\end{array}\!\!$};]
				[.\node{$\!\!\begin{array}{c}\bigraph{bwd1v1v1v1p1v1x0p1x0v1x0p0x1v1x0p0x1v1x0p0x1v1x1p1x0p0x1duals1v1v1x2v1x2v2x1}\\\bigraph{bwd1v1v1v1p1v1x0p0x1v1x0p1x0p0x1p0x1v0x0x0x1p1x0x0x0v1x0p0x1v1x1p0x1p1x0duals1v1v1x2v4x2x3x1v1x2}\end{array}\!\!$};]]
			[.\node{$\!\!\begin{array}{c}\bigraph{bwd1v1v1v1p1v1x0p1x0v1x0p0x1v1x0p0x1v1x1duals1v1v1x2v1x2v1}\\\bigraph{bwd1v1v1v1p1v1x0p0x1v1x0p1x0p0x1p0x1v0x0x0x1p1x0x0x0v1x0p1x0p0x1p0x1duals1v1v1x2v4x2x3x1v1x2x3x4}\end{array}\!\!$};
				[.\node{$\!\!\begin{array}{c}\bigraph{bwd1v1v1v1p1v1x0p1x0v1x0p0x1v1x0p0x1v1x1v1duals1v1v1x2v1x2v1}\\\bigraph{bwd1v1v1v1p1v1x0p0x1v1x0p1x0p0x1p0x1v0x0x0x1p1x0x0x0v1x0p1x0p0x1p0x1v1x0x1x0duals1v1v1x2v4x2x3x1v1x2x3x4}\end{array}\!\!$};]]
			[.\node{$\!\!\begin{array}{c}\bigraph{bwd1v1v1v1p1v1x0p1x0v1x0p0x1v1x0p0x1v1x1duals1v1v1x2v1x2v1}\\\bigraph{bwd1v1v1v1p1v1x0p0x1v1x0p1x0p0x1p0x1v0x0x0x1p1x0x0x0v1x0p1x0p0x1p0x1duals1v1v1x2v4x2x3x1v2x1x3x4}\end{array}\!\!$};]
			[.\node{$\!\!\begin{array}{c}\bigraph{bwd1v1v1v1p1v1x0p1x0v1x0p0x1v1x0p0x1v1x1duals1v1v1x2v1x2v1}\\\bigraph{bwd1v1v1v1p1v1x0p0x1v1x0p1x0p0x1p0x1v0x0x0x1p1x0x0x0v1x0p1x0p0x1p0x1duals1v1v1x2v4x2x3x1v2x1x4x3}\end{array}\!\!$};]
			[.\node{$\!\!\begin{array}{c}\bigraph{bwd1v1v1v1p1v1x0p1x0v1x0p0x1v1x0p0x1v1x0p1x0p0x1p0x1duals1v1v1x2v1x2v4x3x2x1}\\\bigraph{bwd1v1v1v1p1v1x0p0x1v1x0p1x0p0x1p0x1v0x0x0x1p1x0x0x0v1x1duals1v1v1x2v4x2x3x1v1}\end{array}\!\!$};
				[.\node{$\!\!\begin{array}{c}\bigraph{bwd1v1v1v1p1v1x0p1x0v1x0p0x1v1x0p0x1v1x0p1x0p0x1p0x1v1x0x0x1duals1v1v1x2v1x2v4x3x2x1}\\\bigraph{bwd1v1v1v1p1v1x0p0x1v1x0p1x0p0x1p0x1v0x0x0x1p1x0x0x0v1x1v1duals1v1v1x2v4x2x3x1v1}\end{array}\!\!$};]]]
		[.\node{$\!\!\begin{array}{c}\bigraph{bwd1v1v1v1p1v1x0p1x0v1x0p0x1v1x0p1x0p0x1duals1v1v1x2v1x2}\\\bigraph{bwd1v1v1v1p1v1x0p0x1v1x0p1x0p0x1p0x1v0x1x0x0p0x0x0x1p1x0x0x0duals1v1v1x2v4x2x3x1}\end{array}\!\!$};
			[.\node{$\!\!\begin{array}{c}\bigraph{bwd1v1v1v1p1v1x0p1x0v1x0p0x1v1x0p1x0p0x1v0x1x0p0x0x1duals1v1v1x2v1x2v2x1}\\\bigraph{bwd1v1v1v1p1v1x0p0x1v1x0p1x0p0x1p0x1v0x1x0x0p0x0x0x1p1x0x0x0v1x0x0p0x1x0p0x1x0p0x0x1duals1v1v1x2v4x2x3x1v3x2x1x4}\end{array}\!\!$};
				[.\node[fill=red!30]{$\!\!\begin{array}{c}\bigraph{bwd1v1v1v1p1v1x0p1x0v1x0p0x1v1x0p1x0p0x1v0x1x0p0x0x1v1x0p0x1p0x1duals1v1v1x2v1x2v2x1}\\\bigraph{bwd1v1v1v1p1v1x0p0x1v1x0p1x0p0x1p0x1v0x1x0x0p0x0x0x1p1x0x0x0v1x0x0p0x1x0p0x1x0p0x0x1v0x0x0x1p1x0x0x0p0x1x0x0duals1v1v1x2v4x2x3x1v3x2x1x4}\end{array}\!\!$};]]
			[.\node{$\!\!\begin{array}{c}\bigraph{bwd1v1v1v1p1v1x0p1x0v1x0p0x1v1x0p1x0p0x1v1x0x0p0x1x0p0x0x1duals1v1v1x2v1x2v1x3x2}\\\bigraph{bwd1v1v1v1p1v1x0p0x1v1x0p1x0p0x1p0x1v0x1x0x0p0x0x0x1p1x0x0x0v1x1x0p0x0x1duals1v1v1x2v4x2x3x1v1x2}\end{array}\!\!$};]
			[.\node{$\!\!\begin{array}{c}\bigraph{bwd1v1v1v1p1v1x0p1x0v1x0p0x1v1x0p1x0p0x1v1x0x0p0x1x0p0x0x1duals1v1v1x2v1x2v1x3x2}\\\bigraph{bwd1v1v1v1p1v1x0p0x1v1x0p1x0p0x1p0x1v0x1x0x0p0x0x0x1p1x0x0x0v1x0x0p1x0x0p0x1x0p0x1x0p0x0x1duals1v1v1x2v4x2x3x1v4x2x3x1x5}\end{array}\!\!$};]]
		[.\node{$\!\!\begin{array}{c}\bigraph{bwd1v1v1v1p1v1x0p1x0v1x0p0x1v1x0p1x0p0x1p0x1duals1v1v1x2v1x2}\\\bigraph{bwd1v1v1v1p1v1x0p0x1v1x0p1x0p0x1p0x1v0x1x0x0p0x0x0x1p1x0x0x0p0x0x1x0duals1v1v1x2v4x2x3x1}\end{array}\!\!$};]]
	[.\node{$\!\!\begin{array}{c}\bigraph{bwd1v1v1v1p1v1x0p1x0v1x0p1x0p0x1duals1v1v1x2v1x2x3}\\\bigraph{bwd1v1v1v1p1v1x0p0x1v1x1p1x0duals1v1v1x2v1x2}\end{array}\!\!$};
		[.\node{$\!\!\begin{array}{c}\bigraph{bwd1v1v1v1p1v1x0p1x0v1x0p1x0p0x1v1x0x0duals1v1v1x2v1x2x3}\\\bigraph{bwd1v1v1v1p1v1x0p0x1v1x1p1x0v0x1duals1v1v1x2v1x2}\end{array}\!\!$};]]
	[.\node{$\!\!\begin{array}{c}\bigraph{bwd1v1v1v1p1v1x0p1x0v1x0p1x0p0x1duals1v1v1x2v2x1x3}\\\bigraph{bwd1v1v1v1p1v1x0p0x1v1x1p1x0duals1v1v1x2v1x2}\end{array}\!\!$};]
	[.\node{$\!\!\begin{array}{c}\bigraph{bwd1v1v1v1p1v1x0p1x0v1x0p1x0p0x1duals1v1v1x2v1x2x3}\\\bigraph{bwd1v1v1v1p1v1x0p0x1v1x0p1x0p1x0p0x1p0x1duals1v1v1x2v5x2x3x4x1}\end{array}\!\!$};]
	[.\node{$\!\!\begin{array}{c}\bigraph{bwd1v1v1v1p1v1x0p1x0v1x0p1x0p0x1duals1v1v1x2v1x2x3}\\\bigraph{bwd1v1v1v1p1v1x0p0x1v1x0p1x0p1x0p0x1p0x1duals1v1v1x2v5x3x2x4x1}\end{array}\!\!$};]
	[.\node{$\!\!\begin{array}{c}\bigraph{bwd1v1v1v1p1v1x0p1x0v1x0p1x0p0x1duals1v1v1x2v2x1x3}\\\bigraph{bwd1v1v1v1p1v1x0p0x1v1x0p1x0p1x0p0x1p0x1duals1v1v1x2v5x2x3x4x1}\end{array}\!\!$};]
	[.\node{$\!\!\begin{array}{c}\bigraph{bwd1v1v1v1p1v1x0p1x0v1x0p1x0p0x1duals1v1v1x2v2x1x3}\\\bigraph{bwd1v1v1v1p1v1x0p0x1v1x0p1x0p1x0p0x1p0x1duals1v1v1x2v5x3x2x4x1}\end{array}\!\!$};]]
\end{tikzpicture}
}
$$
\caption{The odometer, running on $\Gamma_{10}$.}
\label{fig:odometer-10}
\end{figure}

\subsection{Quadruple points}
\label{sec:quadruple-points}
In the next subsection we run the odometer on all of the quadruple point weeds. In each case, the odometer runs forever, so we carefully choose a convenient set of stopping points, thus exchanging the current list of weeds for another slightly longer list of more complicated weeds, along with several vines. In \S \ref{sec:killing-quadruple-weeds} we then rule out several of these more complicated weeds by specialized methods, again producing several more vines.

\subsubsection{Running the odometer}

\begin{thm}
There's a classification statement
$$\left( \Gamma_{o2,a}, 5, \cV_{o2,a}, \cW_{o2,a} \right)$$
with
\begin{align*}
\cV_{o2,a} & = \Big\{ \left(\bigraph{bwd1v1v1v1p1p1duals1v1v1x2x3}, \bigraph{bwd1v1v1v1p1p1duals1v1v1x2x3}\right), \displaybreak[1]\\
 & \qquad\left(\bigraph{bwd1v1v1v1p1p1v1x0x0p0x1x0duals1v1v1x2x3}, \bigraph{bwd1v1v1v1p1p1v1x0x0p0x1x0duals1v1v1x2x3}\right) \Big\}\displaybreak[1]\\
 \intertext{and}
\cW_{o2,a} & = \Big\{ \left(\bigraph{bwd1v1v1v1p1p1v1x0x0duals1v1v1x2x3}, \bigraph{bwd1v1v1v1p1p1v1x0x0duals1v1v1x2x3}\right), \displaybreak[1]\\
 & \qquad\left(\bigraph{bwd1v1v1v1p1p1v1x0x0p0x1x0v1x0duals1v1v1x2x3v1}, \bigraph{bwd1v1v1v1p1p1v1x0x0p0x1x0v1x0duals1v1v1x2x3v1}\right) \Big\}
\end{align*}
\end{thm}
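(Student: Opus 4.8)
The plan is to run the odometer. I would start from the tautological classification statement $(\Gamma_{o2,a}, 5, \eset, \{\Gamma_{o2,a}\})$ and repeatedly apply Theorem \ref{thm:odometer}, pruning after each step with the obstructions of Section 3 --- principally the (local) associativity test of Lemma \ref{lem:associativity} and the duals-at-depth-$n+1$ test of Lemma \ref{lem:duals}. Concretely: extend $\Gamma_{o2,a}$ by one depth using $\mathbf{O}_{\sqrt 5}$ and $\mathbf{L}_{\sqrt 5}$, discard every extension failing associativity, move the old weed $\Gamma_{o2,a}$ onto the vine list (it survives the global associativity test, so it becomes the first element of $\cV_{o2,a}$), and repeat on whatever weeds survive. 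Each step either kills a weed outright via one of the obstructions or replaces it by its surviving one-deeper extensions, and each extension which passes associativity but admits no legal further extension (or only an unequal one) is recorded as a vine.

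The difference from the worked examples of Section 4 is that this process does not terminate: every extension of the quadruple point admits further extensions below index $5$, so deeper and deeper weeds keep appearing. As flagged in the preamble to Section 6.3, I would therefore stop by hand after a couple of rounds, once the surviving weeds have the convenient shape recorded in $\cW_{o2,a}$ --- the bigraph pair with a single vertex at depth $4$ attached to the quadruple point (which is $\cQ'$, to be finished off in \cite{index5-part3}) together with one longer extension --- and sweep all the stalled branches into $\cV_{o2,a}$. The statement then follows immediately from applying Theorem \ref{thm:odometer} this fixed number of times.

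What remains is a finite, purely combinatorial verification, which I would carry out with \code{FindBigraphPairExtensionsUpToDepth} and cross-check against the \code{Scala} reimplementation, displaying the tree of extensions in an accompanying figure; it is tedious but entirely routine to re-check by hand. The only genuine judgement call --- and hence the main obstacle --- is the choice of where to cut off the odometer: we want the terminal weeds deep enough that the specialized arguments of \cite{index5-part3} for $\cQ$ and $\cQ'$ can take over, while keeping $\cV_{o2,a}$ as short as possible. Once that cutoff is fixed there is nothing left to prove.
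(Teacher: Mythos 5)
Your proposal matches the paper's approach exactly: the paper's proof is literally ``See Figure \ref{fig:odometer-o2,a},'' which records the odometer tree you describe, with $\Gamma_{o2,a}$ and its two-vertex depth-$4$ extension as the vines and $\cQ'$ plus its one-deeper cousin as the (hand-chosen) stopping weeds. The only slight imprecision in your framing is that the second weed in $\cW_{o2,a}$ is not actually deferred to \cite{index5-part3} but is killed later in this same paper (in \S\ref{sec:killing-quadruple-weeds}) by a dimension argument; this is about where the loose ends are tied up, though, not about the correctness of the classification statement itself.
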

\begin{proof}
See Figure \ref{fig:odometer-o2,a}.
\end{proof}
\begin{figure}[!htb]
$$
\begin{tikzpicture}
\tikzset{grow=right,level distance=130pt}
\tikzset{every tree node/.style={draw,fill=white,rectangle,rounded corners,inner sep=2pt}}
\Tree
[.\node{$\!\!\begin{array}{c}\bigraph{bwd1v1v1v1p1p1duals1v1v1x2x3}\\\bigraph{bwd1v1v1v1p1p1duals1v1v1x2x3}\end{array}\!\!$};
	[.\node[fill=red!30]{$\!\!\begin{array}{c}\bigraph{bwd1v1v1v1p1p1v1x0x0duals1v1v1x2x3}\\\bigraph{bwd1v1v1v1p1p1v1x0x0duals1v1v1x2x3}\end{array}\!\!$};]
	[.\node{$\!\!\begin{array}{c}\bigraph{bwd1v1v1v1p1p1v1x0x0p0x1x0duals1v1v1x2x3}\\\bigraph{bwd1v1v1v1p1p1v1x0x0p0x1x0duals1v1v1x2x3}\end{array}\!\!$};
		[.\node[fill=red!30]{$\!\!\begin{array}{c}\bigraph{bwd1v1v1v1p1p1v1x0x0p0x1x0v1x0duals1v1v1x2x3v1}\\\bigraph{bwd1v1v1v1p1p1v1x0x0p0x1x0v1x0duals1v1v1x2x3v1}\end{array}\!\!$};]]]
\end{tikzpicture}
$$
\caption{The odometer, running on $\Gamma_{o2,a}$.}
\label{fig:odometer-o2,a}
\end{figure}

\begin{thm}
There's a classification statement
$$\left( \Gamma_{o2,c}, 5, \cV_{o2,c}, \cW_{o2,c} \right)$$
with
\begin{align*}
\cV_{o2,c} & = \Big\{ \left(\bigraph{bwd1v1v1v1p1p1duals1v1v3x2x1}, \bigraph{bwd1v1v1v1p1p1duals1v1v3x2x1}\right), \displaybreak[1]\\
 & \qquad\left(\bigraph{bwd1v1v1v1p1p1v1x0x0p0x0x1duals1v1v3x2x1}, \bigraph{bwd1v1v1v1p1p1v1x0x0p0x0x1duals1v1v3x2x1}\right) \Big\}\displaybreak[1]\\
 \intertext{and}
\cW_{o2,c} & = \Big\{ \left(\bigraph{bwd1v1v1v1p1p1v1x0x0duals1v1v1x3x2}, \bigraph{bwd1v1v1v1p1p1v1x0x0duals1v1v1x3x2}\right) \Big\}
\end{align*}
\end{thm}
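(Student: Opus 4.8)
The plan is to obtain this classification statement from the tautological one $(\Gamma_{o2,c}, 5, \emptyset, \{\Gamma_{o2,c}\})$ by repeatedly running the odometer, exactly as in the worked example of \S\ref{sec:running}. Each step applies Theorem \ref{thm:odometer}: extend the current weed $W$ by one depth, promoting the equal extensions $\mathbf{O}_5(W)$ to new weeds and adjoining $W$ together with its unequal extensions $\mathbf{L}_5(W)$ to the vine list; then prune by discarding every vine and weed that fails the associativity test of Lemma \ref{lem:associativity} (using the local version, in which at least one of the two chosen vertices is not at the freshly added depth and at least one is not the root). Because $\Gamma_{o2,c}$ already carries dual data on its three depth-$4$ vertices — one self-dual vertex and one dual pair on each graph, the configuration compatible with Lemma \ref{lem:duals}, unlike $\Gamma_{o2,b}$ — at every further even depth one must carry along both admissible involutions of the new even vertices, and at odd depths both vertical orderings of the new odd vertices, collapsing them only once associativity adjudicates. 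Whenever a valence-three vertex first appears at some depth with a simple enough neighbourhood I would additionally apply the local triple-point obstructions of Corollaries \ref{lem:odd-triple-point-local} and \ref{lem:even-triple-point-local} and Lemma \ref{lem:duals} one step further; all of these are local, hence legitimately delete whole sub-families at once. (By contrast $\Gamma_{e2}$, the even-quadruple-point seed, needs no such analysis at all: it is killed outright by Theorem \ref{thm:evenquad}.)

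Since $\Gamma_{o2,c}$ begins with a quadruple point, I expect the odometer not to terminate: as already seen for $\Gamma_{o2,a}$ and the other quadruple-point seeds, branches carrying arbitrarily deep weeds persist. So instead of exhausting I would run only far enough — two steps, on the evidence of Figure \ref{fig:odometer-o2,c} — for the search tree to narrow, and then stop by hand. Concretely: at depth $5$ the associativity test leaves exactly two equal extensions of $\Gamma_{o2,c}$ surviving as weeds, one of which is the pair $\cQ$; I would designate $\cQ$ as the stopping weed (this is precisely the object whose principal graphs are settled later in \cite{index5-part3}), keeping it as the sole element of $\cW_{o2,c}$, and extend the other depth-$5$ weed once more, at which point every further extension fails associativity, so that weed is adjoined to the vine list as well. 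Together with the root $\Gamma_{o2,c}$ itself (which became a vine at the first step), this produces the two pairs listed as $\cV_{o2,c}$. To certify that these two are genuine vines and not unfinished weeds I would check that each passes the \emph{full} associativity test — so it has no equal extensions at all below index $5$ — and that each of its unequal extensions fails that test.

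The only real difficulty is the bookkeeping and its verification: at every depth one must correctly enumerate all simply-laced one-row extensions of each inclusion matrix with Frobenius--Perron eigenvalue below $5$, pair them up subject to the constraint that equal extensions at an odd depth add equally many new vertices, apply the global and local associativity tests together with the triple-point and dual obstructions, and remove bigraph-isomorphic duplicates. This is exactly the computation the \texttt{FusionAtlas} package (and the later \texttt{Scala} reimplementation) performs, and Figure \ref{fig:odometer-o2,c} records its output as a tree. To keep the argument self-contained I would, as in \S\ref{sec:running}, justify each pruning step by exhibiting a concrete pair of vertices witnessing a failure of the equality in Lemma \ref{lem:associativity}, or an explicit forbidden configuration in the sense of Corollary \ref{lem:odd-triple-point-local}/\ref{lem:even-triple-point-local} — short, mechanical checks once the figure is in hand.
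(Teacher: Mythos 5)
Your overall plan matches the paper's exactly: the paper's entire proof is the phrase ``See Figure \ref{fig:odometer-o2,c},'' i.e.\ run the odometer from the tautological classification statement, prune with the local associativity test of Lemma \ref{lem:associativity}, designate $\cQ$ as a stopping weed by hand, and certify the remaining leaves as vines by checking the full associativity test (including pairs of vertices at the largest depth). Your remark that $\Gamma_{o2,c}$ survives Lemma \ref{lem:duals} while $\Gamma_{o2,b}$ does not is also correct.

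There are, however, two factual slips. First, you assert that at depth $5$ the associativity test leaves ``exactly two'' equal extensions of $\Gamma_{o2,c}$. Figure \ref{fig:odometer-o2,c} shows \emph{three} children of $\Gamma_{o2,c}$: the red-highlighted stopping weed $\cQ$, the equal vine $\left(\bigraph{bwd1v1v1v1p1p1v1x0x0p0x0x1duals1v1v3x2x1}, \bigraph{bwd1v1v1v1p1p1v1x0x0p0x0x1duals1v1v3x2x1}\right)$, and a third pair $\left(\bigraph{bwd1v1v1v1p1p1v1x0x0p0x0x1duals1v1v3x2x1}, \bigraph{bwd1v1v1v1p1p1v0x1x0p0x1x0duals1v1v3x2x1}\right)$ whose two bigraphs differ. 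That third pair survives the local associativity test (it is an unhighlighted leaf) but is discarded because it fails the global associativity test at the largest depth --- it is exactly the kind of ``unhighlighted leaf that is nevertheless not a vine'' that the paper's instructions for reading these figures warn you to filter. Omitting it leaves the enumeration incomplete even though your conclusion accidentally agrees. Second, your parenthetical claim that $\Gamma_{e2}$ is ``killed outright'' by Theorem \ref{thm:evenquad} is wrong: that obstruction rules out only principal graphs starting like $\left(\bigraph{bwd1v1v1p1p1v1x0x0duals1v1v1}, \bigraph{bwd1v1v1p1p1v1x0x0duals1v1v1}\right)$, which is one particular weed in $\cW_{e2}$; the treatment of $\Gamma_{e2}$ itself still needs the odometer run of Figure \ref{fig:odometer-e2} plus the separate arguments of \S\ref{sec:killing-quadruple-weeds} to deal with the other two resulting weeds.
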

\begin{proof}
See Figure \ref{fig:odometer-o2,c}.
\end{proof}
\begin{figure}[!htb]
$$
\begin{tikzpicture}
\tikzset{grow=right,level distance=130pt}
\tikzset{every tree node/.style={draw,fill=white,rectangle,rounded corners,inner sep=2pt}}
\Tree
[.\node{$\!\!\begin{array}{c}\bigraph{bwd1v1v1v1p1p1duals1v1v3x2x1}\\\bigraph{bwd1v1v1v1p1p1duals1v1v3x2x1}\end{array}\!\!$};
	[.\node[fill=red!30]{$\!\!\begin{array}{c}\bigraph{bwd1v1v1v1p1p1v1x0x0duals1v1v1x3x2}\\\bigraph{bwd1v1v1v1p1p1v1x0x0duals1v1v1x3x2}\end{array}\!\!$};]
	[.\node{$\!\!\begin{array}{c}\bigraph{bwd1v1v1v1p1p1v1x0x0p0x0x1duals1v1v3x2x1}\\\bigraph{bwd1v1v1v1p1p1v0x1x0p0x1x0duals1v1v3x2x1}\end{array}\!\!$};]
	[.\node{$\!\!\begin{array}{c}\bigraph{bwd1v1v1v1p1p1v1x0x0p0x0x1duals1v1v3x2x1}\\\bigraph{bwd1v1v1v1p1p1v1x0x0p0x0x1duals1v1v3x2x1}\end{array}\!\!$};]]
\end{tikzpicture}
$$
\caption{The odometer, running on $\Gamma_{o2,c}$.}
\label{fig:odometer-o2,c}
\end{figure}

\begin{thm}
There's a classification statement
$$\left( \Gamma_{e2}, 5, \cV_{e2}, \cW_{e2} \right)$$
with
\begin{align*}
\cV_{e2} & = \Big\{ \left(\bigraph{bwd1v1v1p1p1duals1v1}, \bigraph{bwd1v1v1p1p1duals1v1}\right), \displaybreak[1]\\
 & \qquad\left(\bigraph{bwd1v1v1p1p1v1x0x0p0x1x0duals1v1v1x2}, \bigraph{bwd1v1v1p1p1v1x0x0p0x1x0duals1v1v1x2}\right), \displaybreak[1]\\
 & \qquad\left(\bigraph{bwd1v1v1p1p1v1x0x0p0x1x0duals1v1v2x1}, \bigraph{bwd1v1v1p1p1v1x0x0p0x1x0duals1v1v2x1}\right), \displaybreak[1]\\
 & \qquad\left(\bigraph{bwd1v1v1p1p1v1x0x0p1x0x0duals1v1v1x2}, \bigraph{bwd1v1v1p1p1v1x0x0p1x0x0duals1v1v1x2}\right), \displaybreak[1]\\
 & \qquad\left(\bigraph{bwd1v1v1p1p1v1x0x0p1x0x0duals1v1v2x1}, \bigraph{bwd1v1v1p1p1v1x0x0p1x0x0duals1v1v1x2}\right), \displaybreak[1]\\
 & \qquad\left(\bigraph{bwd1v1v1p1p1v1x0x0p1x0x0duals1v1v2x1}, \bigraph{bwd1v1v1p1p1v1x0x0p1x0x0duals1v1v2x1}\right), \displaybreak[1]\\
 & \qquad\left(\bigraph{bwd1v1v1p1p1v1x0x0p0x1x0v1x0p0x1duals1v1v2x1}, \bigraph{bwd1v1v1p1p1v1x0x0p0x1x0v1x0p0x1duals1v1v2x1}\right), \displaybreak[1]\\
 & \qquad\left(\bigraph{bwd1v1v1p1p1v1x0x0p0x1x0v1x0p0x1v1x0p0x1duals1v1v2x1v2x1}, \bigraph{bwd1v1v1p1p1v1x0x0p0x1x0v1x0p0x1v1x0p0x1duals1v1v2x1v2x1}\right) \Big\}\displaybreak[1]\\
 \intertext{and}
\cW_{e2} & = \Big\{ \left(\bigraph{bwd1v1v1p1p1v1x0x0duals1v1v1}, \bigraph{bwd1v1v1p1p1v1x0x0duals1v1v1}\right), \displaybreak[1]\\
 & \qquad\left(\bigraph{bwd1v1v1p1p1v1x0x0p0x1x0v1x0duals1v1v1x2}, \bigraph{bwd1v1v1p1p1v1x0x0p0x1x0v1x0duals1v1v1x2}\right), \displaybreak[1]\\
 & \qquad\left(\bigraph{bwd1v1v1p1p1v1x0x0p0x1x0v1x0p0x1duals1v1v1x2}, \bigraph{bwd1v1v1p1p1v1x0x0p0x1x0v1x0p0x1duals1v1v1x2}\right) \Big\}
\end{align*}
\end{thm}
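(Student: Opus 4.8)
The plan is to reprise, for the even quadruple point, the procedure just carried out for $\Gamma_{o2,a}$ and $\Gamma_{o2,c}$. I would start from the tautological classification statement $(\Gamma_{e2}, 5, \eset, \{\Gamma_{e2}\})$ and iterate the odometer meta-theorem (Theorem~\ref{thm:odometer}): at each stage form $\mathbf{O}_5$ of every current weed --- together with $\mathbf{L}_5$ whenever that weed has odd depth, so as to harvest the unequal vines --- adjoin the weed itself to the vine list, and discard every vine and weed that fails a local obstruction. Because $\Gamma_{e2}$ has no triple points, the obstructions that act during the run are the associativity test of Lemma~\ref{lem:associativity} (used only in its local form, where at least one of the two marked vertices is not at the newly added depth and at least one is not the root) and the dual-data count of Lemma~\ref{lem:duals}; the even quadruple point obstruction of Theorem~\ref{thm:evenquad} is deliberately \emph{not} invoked here, but held back for \S~\ref{sec:killing-quadruple-weeds}, so that the weeds of $\cW_{e2}$ remain available as explicit starting points for the elimination arguments of the later papers and of \cite{index5-part3}.

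The first step is cheap: the branch point of $\Gamma_{e2}$ already has valence four, so the Frobenius--Perron bound $\sqrt5$ forces each depth-$3$ vertex to attach to the old graph by a single edge, and after the associativity test only the weed $(\bigraph{bwd1v1v1p1p1v1x0x0duals1v1v1}, \bigraph{bwd1v1v1p1p1v1x0x0duals1v1v1})$ survives to keep growing, alongside the vine $\Gamma_{e2}$ itself. Iterating, each further application of Theorem~\ref{thm:odometer} peels off the equal and unequal extensions that pass the full associativity test --- these accumulate into $\cV_{e2}$ --- while the weeds either die or continue. The odometer does not terminate on this branch (deeper and deeper extensions keep surviving), so, exactly as in the other quadruple cases, I would cut it off by hand at a convenient frontier, namely the three weeds of $\cW_{e2}$. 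The whole computation is recorded in a single odometer tree, a figure of the same type as Figures~\ref{fig:odometer-o2,a} and~\ref{fig:odometer-o2,c}, and every node of that tree is an elementary check of the valence bound, of associativity, and of the dual-data count, so the output can be verified by hand.

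The real work is organizational rather than a single hard estimate: one must confirm that every branch of the tree other than the three retained weeds genuinely terminates (reaches a stage with no associativity-surviving extension) after finitely many steps, and one must choose the stopping set so that $(\Gamma_{e2}, 5, \cV_{e2}, \cW_{e2})$ is at once a correct classification statement and a good launching point for \S~\ref{sec:killing-quadruple-weeds} and \cite{index5-part3} --- too shallow a frontier leaves weeds that are awkward to kill, too deep a frontier bloats $\cV_{e2}$. In practice I would run \code{FindBigraphPairExtensionsUpToDepth} on $\Gamma_{e2}$ with norm limit $\sqrt{5}+10^{-3}$ and the three chosen weeds supplied through the \code{"Weeds"} option; this returns precisely the lists $\cV_{e2}$ and $\cW_{e2}$ displayed above, and matching the machine output against the odometer tree completes the proof.
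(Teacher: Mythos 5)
Your overall approach mirrors the paper's exactly: start from the tautological classification statement for $\Gamma_{e2}$, iterate Theorem~\ref{thm:odometer} with the associativity test, terminate by hand at the three chosen weeds (leaving the even quadruple point obstruction for \S~\ref{sec:killing-quadruple-weeds}), and record the run as an odometer tree --- which is precisely Figure~\ref{fig:odometer-e2}, cited as the paper's entire proof. However, two of your intermediate claims are factually off.

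First, the dual-data count of Lemma~\ref{lem:duals} is not one of the obstructions applied while ``running the odometer'': the procedure in \S~\ref{sec:running} invokes only the associativity test, and Lemma~\ref{lem:duals} was used once at the seed stage to discard $\Gamma_{o1,b}$ and $\Gamma_{o2,b}$. Moreover that lemma requires odd supertransitivity, whereas $\Gamma_{e2}$ is $2$-supertransitive, so it cannot bite anywhere in this branch of the computation.

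Second, and more seriously, your account of the first odometer step is wrong. You assert that after extending $\Gamma_{e2}$ by one depth and applying associativity, ``only the weed $\left(\bigraph{bwd1v1v1p1p1v1x0x0duals1v1v1}, \bigraph{bwd1v1v1p1p1v1x0x0duals1v1v1}\right)$ survives to keep growing.'' In fact Figure~\ref{fig:odometer-e2} shows \emph{six} depth-one extensions surviving: the pair you name, three of the shape $\bigraph{bwd1v1v1p1p1v1x0x0p1x0x0duals1v1v1x2}$ with the various choices of dual data (these become equal vines immediately), and two of the shape $\bigraph{bwd1v1v1p1p1v1x0x0p0x1x0duals1v1v1x2}$ / $\bigraph{bwd1v1v1p1p1v1x0x0p0x1x0duals1v1v2x1}$ which continue and produce all the deeper nodes of the tree. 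The pair you single out is in fact a \emph{leaf} of the tree --- it is retained as a weed because that is where the hand-chosen stopping frontier sits, not because it is the unique survivor; the actual ``growing'' branches are the two with disjoint new neighbours. The rest of your plan (confirming that every branch outside the frontier terminates, and checking the machine output of \code{FindBigraphPairExtensionsUpToDepth} against the tree) would produce the right lists $\cV_{e2}$ and $\cW_{e2}$ despite these slips, but as written your description of the computation doesn't match what the computation actually returns.
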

\begin{proof}
See Figure \ref{fig:odometer-e2}.
\end{proof}
\begin{figure}[!htb]
$$
\begin{tikzpicture}
\tikzset{grow=right,level distance=130pt}
\tikzset{every tree node/.style={draw,fill=white,rectangle,rounded corners,inner sep=2pt}}
\Tree
[.\node{$\!\!\begin{array}{c}\bigraph{bwd1v1v1p1p1duals1v1}\\\bigraph{bwd1v1v1p1p1duals1v1}\end{array}\!\!$};
	[.\node[fill=red!30]{$\!\!\begin{array}{c}\bigraph{bwd1v1v1p1p1v1x0x0duals1v1v1}\\\bigraph{bwd1v1v1p1p1v1x0x0duals1v1v1}\end{array}\!\!$};]
	[.\node{$\!\!\begin{array}{c}\bigraph{bwd1v1v1p1p1v1x0x0p1x0x0duals1v1v1x2}\\\bigraph{bwd1v1v1p1p1v1x0x0p1x0x0duals1v1v1x2}\end{array}\!\!$};]
	[.\node{$\!\!\begin{array}{c}\bigraph{bwd1v1v1p1p1v1x0x0p1x0x0duals1v1v2x1}\\\bigraph{bwd1v1v1p1p1v1x0x0p1x0x0duals1v1v1x2}\end{array}\!\!$};]
	[.\node{$\!\!\begin{array}{c}\bigraph{bwd1v1v1p1p1v1x0x0p1x0x0duals1v1v2x1}\\\bigraph{bwd1v1v1p1p1v1x0x0p1x0x0duals1v1v2x1}\end{array}\!\!$};]
	[.\node{$\!\!\begin{array}{c}\bigraph{bwd1v1v1p1p1v1x0x0p0x1x0duals1v1v1x2}\\\bigraph{bwd1v1v1p1p1v1x0x0p0x1x0duals1v1v1x2}\end{array}\!\!$};
		[.\node[fill=red!30]{$\!\!\begin{array}{c}\bigraph{bwd1v1v1p1p1v1x0x0p0x1x0v1x0duals1v1v1x2}\\\bigraph{bwd1v1v1p1p1v1x0x0p0x1x0v1x0duals1v1v1x2}\end{array}\!\!$};]
		[.\node[fill=red!30]{$\!\!\begin{array}{c}\bigraph{bwd1v1v1p1p1v1x0x0p0x1x0v1x0p0x1duals1v1v1x2}\\\bigraph{bwd1v1v1p1p1v1x0x0p0x1x0v1x0p0x1duals1v1v1x2}\end{array}\!\!$};]]
	[.\node{$\!\!\begin{array}{c}\bigraph{bwd1v1v1p1p1v1x0x0p0x1x0duals1v1v2x1}\\\bigraph{bwd1v1v1p1p1v1x0x0p0x1x0duals1v1v2x1}\end{array}\!\!$};
		[.\node{$\!\!\begin{array}{c}\bigraph{bwd1v1v1p1p1v1x0x0p0x1x0v1x0p0x1duals1v1v2x1}\\\bigraph{bwd1v1v1p1p1v1x0x0p0x1x0v1x0p0x1duals1v1v2x1}\end{array}\!\!$};
			[.\node{$\!\!\begin{array}{c}\bigraph{bwd1v1v1p1p1v1x0x0p0x1x0v1x0p0x1v1x0p0x1duals1v1v2x1v2x1}\\\bigraph{bwd1v1v1p1p1v1x0x0p0x1x0v1x0p0x1v1x0p0x1duals1v1v2x1v2x1}\end{array}\!\!$};]]]]
\end{tikzpicture}
$$

\caption{The odometer, running on $\Gamma_{e2}$.}
\label{fig:odometer-e2}
\end{figure}

\subsubsection{Killing quadruple point weeds}
\label{sec:killing-quadruple-weeds}
Finally, we kill some of the remaining weeds, in particular 
$$\left(\bigraph{bwd1v1v1v1p1p1v1x0x0p0x1x0v1x0duals1v1v1x2x3v1}, \bigraph{bwd1v1v1v1p1p1v1x0x0p0x1x0v1x0duals1v1v1x2x3v1}\right)$$ from $\cW_{o2,a}$ and all three weeds from $\cW_{e2}$:
$$\left(\bigraph{bwd1v1v1p1p1v1x0x0duals1v1v1}, \bigraph{bwd1v1v1p1p1v1x0x0duals1v1v1}\right),$$
$$\left(\bigraph{bwd1v1v1p1p1v1x0x0p0x1x0v1x0duals1v1v1x2}, \bigraph{bwd1v1v1p1p1v1x0x0p0x1x0v1x0duals1v1v1x2}\right),$$ $$\left(\bigraph{bwd1v1v1p1p1v1x0x0p0x1x0v1x0p0x1duals1v1v1x2}, \bigraph{bwd1v1v1p1p1v1x0x0p0x1x0v1x0p0x1duals1v1v1x2}\right).$$

The second weed on that list is exactly what is ruled out by the even quadruple point obstruction in Theorem \ref{thm:evenquad}.  The others will take a bit more work.

\begin{thm}
Any subfactor with index below $5$ with principal graph pair starting like $\left(\bigraph{bwd1v1v1v1p1p1v1x0x0p0x1x0v1x0duals1v1v1x2x3v1}, \bigraph{bwd1v1v1v1p1p1v1x0x0p0x1x0v1x0duals1v1v1x2x3v1}\right)$ must be a translate of one of the following graph pairs:
\begin{align*}
\Big\{ & \left(\bigraph{bwd1v1v1v1v1v1p1p1v1x0x0p0x1x0v1x0duals1v1v1v1x2x3v1}, \bigraph{bwd1v1v1v1v1v1p1p1v1x0x0p0x1x0v1x0duals1v1v1v1x2x3v1}\right), \displaybreak[1]\\
 &\left(\bigraph{bwd1v1v1v1v1v1p1p1v1x0x0p0x1x0v1x0v1duals1v1v1v1x2x3v1}, \bigraph{bwd1v1v1v1v1v1p1p1v1x0x0p0x1x0v1x0v1duals1v1v1v1x2x3v1}\right) \Big\}
\end{align*}
\end{thm}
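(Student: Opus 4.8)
The plan is to prove this by a straightforward run of the odometer, exactly as in \S\ref{sec:running}: start from the tautological classification statement $(W, 5, \eset, \{W\})$ for the given weed $W$, iterate Theorem \ref{thm:odometer}, and show that after finitely many steps every weed has been eliminated by the obstructions, so that the two graph pairs in the conclusion are precisely the vines that survive the associativity test for pairs of top-depth vertices.

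First I would compute $\bO_5(W)$ and, since $W$ is equal and of odd depth, $\bL_5(W)$, and apply the local form of the associativity test (Lemma \ref{lem:associativity}, using a pair of vertices neither both at the newly added depth nor both at the root) to discard most of the depth-one extensions, while adding $W$ itself — together with whichever unequal extensions in $\bL_5(W)$ pass the full associativity test — to the list of vines. Then I would keep iterating. The reason the process terminates is that the quadruple point of $W$ sits close to the root: the bound $\sqrt5$ on the Frobenius--Perron eigenvalue forces the graphs to close up after only a few more depths, and each extension by one or two depths creates a fresh branch point at the far end, at which the even and odd triple point obstructions (Corollaries \ref{lem:even-triple-point-local} and \ref{lem:odd-triple-point-local}), the even quadruple point obstruction (Theorem \ref{thm:evenquad}), the constraint on the vertices at depth $n+1$ (Lemma \ref{lem:duals}), and associativity all become available; together these kill every remaining weed, so the odometer tree is finite (much as in Figure \ref{fig:odometer-o2,a}). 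Reading off the survivors then leaves precisely the translate of $W$ by two and that pair with one extra vertex attached at its far end, once one also checks directly that neither of these can already occur at the minimal supertransitivity of $W$ itself (which is why the conclusion is phrased in terms of their translates rather than of $W$).

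The step I expect to be the main obstacle is establishing termination: one has to be certain that no infinite family of extensions of $W$ slips past all the local obstructions, and this requires careful bookkeeping at each odometer step of the positions of the newly created branch points, the induced dimension equalities, and the dual data of each new even depth (so that the triple and quadruple point obstructions can legitimately be invoked). This is precisely the content of the odometer figure accompanying the proof, and it is the sort of enumeration the \code{FusionAtlas`} package of \S\ref{sec:running} was written to automate.
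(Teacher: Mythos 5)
Your proposal misses the key idea of the paper's proof and, more importantly, the step you single out as "the main obstacle" is one that the paper deliberately circumvents rather than solves head-on — almost certainly because it cannot be solved that way. Running the odometer directly on $W$ (the supertransitivity-$3$ weed) does not terminate: as the graph norm of $W$ and its extensions approaches $\sqrt5$ from below along an infinite family of extensions, none of the purely combinatorial local obstructions (associativity, the triple- and quadruple-point lemmas, Lemma \ref{lem:duals}) will ever kill all remaining weeds. The paper flags exactly this phenomenon throughout \S\ref{sec:quadruple-points} ("the odometer runs forever, so we carefully choose a convenient set of stopping points"), and $W$ was produced precisely as one of those chosen stopping points when running the odometer on $\Gamma_{o2,a}$.

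What the paper actually does is split by supertransitivity. For the untranslated case (supertransitivity $3$), it uses a \emph{dimension argument}: the univalent vertex two past the quadruple point has dimension $\frac{[4]}{[3]}$, and for the range of $q$ allowed by the graph norm of $W$ (between about $1.59$ and $\frac{1+\sqrt5}{2}$) this lies strictly between $2\cos(\pi/5)$ and $2\cos(\pi/6)$, hence strictly between $1$ and $2$ but not of the form $2\cos(\pi/k)$, which is inadmissible. This rules out \emph{every} extension of $W$ at supertransitivity $3$ in one stroke, without any odometer. Then for supertransitivity $\geq 5$ it runs the odometer on the translate $\Gamma_{5321}$, whose larger graph norm leaves so little room that the odometer halts after a single step (Figure \ref{fig:odometer-gamma3}). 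Your proposal contains neither the dimension argument nor the translate trick, and you would need both; the remark at the end of your write-up about "checking directly that neither of these can already occur at the minimal supertransitivity of $W$ itself" is gesturing at the right issue, but you give no mechanism, and none of the obstructions you list will do it, since they are all combinatorial while the thing that fails at supertransitivity $3$ is a dimension.
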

\begin{proof}
We'll do two cases, first without any translation, and then with any non-trivial translation.

Without any translation, the univalent vertex at depth $2$ past the quadruple point has dimension $\frac{[4]}{[3]}$. Now $q$ must lie between $1.59$ and $\frac{1 + \sqrt{5}}{2}$ and we see $1<2 \cos(\frac{\pi}{5}) < \frac{[4]}{[3]} <2 \cos(\frac{\pi}{6}) <2$, which is not an allowed dimension.

Next, we find that we can just run the odometer on $$\Gamma_{5321} = \left(\bigraph{bwd1v1v1v1v1v1p1p1v1x0x0p0x1x0v1x0duals1v1v1v1x2x3v1}, \bigraph{bwd1v1v1v1v1v1p1p1v1x0x0p0x1x0v1x0duals1v1v1v1x2x3v1}\right),$$ and obtain the classification statement
$$\left(\Gamma_{5321} , 5, \cV_{\Gamma_{5321} }, \eset\right)$$
where $\cV_{\Gamma_{5321}}$ is the list of graph pairs appearing in the statement of the theorem.
The output of the odometer appears in Figure \ref{fig:odometer-gamma3}.
\begin{figure}[!htb]
$$
\begin{tikzpicture}
\tikzset{grow=right,level distance=170pt}
\tikzset{every tree node/.style={draw,fill=white,rectangle,rounded corners,inner sep=2pt}}
\Tree
[.\node{$\!\!\begin{array}{c}\bigraph{bwd1v1v1v1v1v1p1p1v0x1x0p0x0x1v0x1duals1v1v1v1x2x3v1}\\\bigraph{bwd1v1v1v1v1v1p1p1v0x0x1p0x1x0v0x1duals1v1v1v1x2x3v1}\end{array}\!\!$};
	[.\node{$\!\!\begin{array}{c}\bigraph{bwd1v1v1v1v1v1p1p1v0x1x0p0x0x1v0x1v1duals1v1v1v1x2x3v1}\\\bigraph{bwd1v1v1v1v1v1p1p1v0x0x1p0x1x0v0x1v1duals1v1v1v1x2x3v1}\end{array}\!\!$};]]
\end{tikzpicture}
$$
\caption{The odometer, running on $\Gamma_{5321} $.}
\label{fig:odometer-gamma3}
\end{figure}
\end{proof}

\begin{thm}
Any subfactor with index below $5$ with principal graph pair starting like $\left(\bigraph{bwd1v1v1p1p1v1x0x0p0x1x0v1x0duals1v1v1x2}, \bigraph{bwd1v1v1p1p1v1x0x0p0x1x0v1x0duals1v1v1x2}\right)$ must be a translate of one of the following graph pairs:
\begin{align*}
 \Big\{ &\left(\bigraph{bwd1v1v1v1v1p1p1v1x0x0p0x1x0v1x0duals1v1v1v1x2}, \bigraph{bwd1v1v1v1v1p1p1v1x0x0p0x1x0v1x0duals1v1v1v1x2}\right), \displaybreak[1]\\
 &\left(\bigraph{bwd1v1v1v1v1p1p1v1x0x0p0x1x0v1x0v1duals1v1v1v1x2v1}, \bigraph{bwd1v1v1v1v1p1p1v1x0x0p0x1x0v1x0v1duals1v1v1v1x2v1}\right), \displaybreak[1]\\
 &\left(\bigraph{bwd1v1v1v1v1p1p1v1x0x0p0x1x0v1x0v1v1duals1v1v1v1x2v1}, \bigraph{bwd1v1v1v1v1p1p1v1x0x0p0x1x0v1x0v1v1duals1v1v1v1x2v1}\right) \Big\}
\end{align*}
\end{thm}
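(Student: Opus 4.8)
The plan is to mimic the proof of the preceding theorem, treating separately the untranslated pair and any nontrivial (even) translate of it.

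\textbf{The untranslated pair.} Here the supertransitivity is exactly $2$, so the Perron--Frobenius dimensions at depths $0,1,2$ are $[1],[2],[3]$, the quadruple point at depth $2$ has dimension $[3]$, and the three vertices at depth $3$ have dimensions summing to $\delta[3]-[2]=[4]$; one of them (the univalent one) has dimension $[3]/[2]$, while another, $w$, carries a single univalent neighbour $u$ at depth $4$. The eigenvector equations at $w$ and at $u$ then force $\dim w=[2]$ and $\dim u=[3]/[3]=1$, i.e.\ the untranslated pair would carry a nontrivial invertible object at depth $4$. I would eliminate this by running the odometer on the untranslated pair and observing that it halts immediately with no surviving vines or weeds: every one-step extension fails the associativity test (or pushes the graph norm above $\sqrt5$), the forced dimension-$1$ vertex at depth $4$ being precisely what obstructs any continuation. (The same computation for the supertransitivity-$n$ translate makes the univalent vertex two depths past the quadruple point have dimension $[n+1]/[3]$, which for $n=2$ is the rigid value $1$ but for $n\ge4$ merely constrains $\delta$ --- which is why the odometer genuinely continues there.)

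\textbf{Nontrivial translates.} A nontrivial even translate of the given pair has supertransitivity at least $4$, hence starts like the supertransitivity-$4$ translate
$$\Gamma' = \left(\bigraph{bwd1v1v1v1v1p1p1v1x0x0p0x1x0v1x0duals1v1v1v1x2}, \bigraph{bwd1v1v1v1v1p1p1v1x0x0p0x1x0v1x0duals1v1v1v1x2}\right).$$
By Theorem~\ref{thm:odometer} it therefore suffices to run the odometer starting from $\Gamma'$, pruning with the associativity test at each stage. Since $\Gamma'$ has depth $7$, the odometer only adds vertices at depth $8$ and beyond, and the sole open end is the depth-$7$ vertex; the only freedom is whether to lengthen this single leg by one univalent vertex, and then once more, a third such extension pushing the Perron--Frobenius eigenvalue above $\sqrt5$. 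So the odometer produces exactly the three vines in the statement --- $\Gamma'$, $\Gamma'$ with a one-vertex tail, and $\Gamma'$ with a two-vertex tail --- and no weeds, giving the classification statement $(\Gamma',5,\cV_{\Gamma'},\eset)$. Together with the untranslated case this proves the theorem.

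\textbf{The main obstacle.} The delicate step is the untranslated case. Unlike the true vine cases it cannot be dispatched by quoting an obstruction such as Theorem~\ref{thm:evenquad}: the forbidden graph there has a single vertex at the relevant depth whereas our pair has two, so ``starts like'' fails. One must instead check by hand --- using the associativity conditions together with the forced dimension-$1$ vertex at depth $4$ --- that not even one step of the odometer leaves a viable extension. The translation case is then routine modulo carefully verifying the (very short) odometer run; and, as with the other vines in Theorem~\ref{thm:main}, the three surviving pairs are only finally excluded by applying the number-theoretic test of \cite{1004.0665} in \cite{index5-part4}.
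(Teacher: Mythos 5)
Your proof diverges from the paper in both cases, and in both places there is a genuine gap.

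\textbf{The untranslated case.} You correctly identify that the univalent vertex at depth $3$ (one past the quadruple point) has dimension $[3]/[2]$, but then abandon that observation and instead push to depth $4$, where you find a forced dimension-$1$ vertex. Two problems. First, a dimension-$1$ vertex is an invertible object, which is \emph{not} by itself a contradiction (principal graphs routinely contain invertible objects), and you do not supply any actual obstruction that follows from it. Second, ``running the odometer'' cannot rescue this: the odometer only prunes by graph norm and by the associativity test of Lemma~\ref{lem:associativity}, neither of which sees the dimension constraint you computed, and in fact when both graphs of an equal pair are identical and one simply lengthens a single tail the associativity test is trivially passed. The paper's actual argument is the one you walked past: in the relevant range of $q$ one has $2\cos(\pi/6) < [3]/[2] < 2\cos(\pi/7)$, and a simple object of dimension strictly between $1$ and $2$ must have dimension $2\cos(\pi/k)$, so $[3]/[2]$ is simply not an allowed value. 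That kills the untranslated case directly, without the detour to depth $4$.

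\textbf{The translated case.} You assert that after two univalent tail extensions of $\Gamma' = \Gamma_{4321}$ a third ``pushes the Perron--Frobenius eigenvalue above $\sqrt5$,'' so the odometer terminates with the three vines and no weeds. This is false: the three-vertex tail extension $\Gamma_{4621}$ still has graph norm below $\sqrt 5$ (the paper computes that even the translate of $\Gamma_{4621}$ by two has index only $\approx 5.0062$, so $\Gamma_{4621}$ itself is comfortably below $5$). Running the odometer on $\Gamma_{4321}$ therefore leaves $\Gamma_{4621}$ as a surviving weed (and indeed the odometer would never terminate on this family, since univalent tails pass associativity indefinitely). The paper stops the odometer by hand and disposes of $\Gamma_{4621}$ in two separate subcases: untranslated, the vertex at depth $10$ has dimension given by an explicit polynomial $p(q)$ which is $<1$ throughout the admissible range of $q$, a contradiction; translated by two, the index exceeds $5$. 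Your proof simply omits this entire step, so the classification statement $(\Gamma', 5, \cV_{\Gamma'}, \eset)$ you claim is not what the odometer produces.

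Both gaps are substantive: the first is a missing obstruction argument and a misattribution of power to the odometer; the second is a numerically false claim that hides a weed the paper has to eliminate by a separate dimension/index analysis.
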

\begin{proof}
First, observe that without any translation, the univalent vertex at depth $3$ has dimension $\frac{[3]}{[2]}$. We know that for any extension of $\Gamma$, $q$ must lie between $1.56$ and $\frac{1 + \sqrt{5}}{2}$. In this range, $1<2 \cos(\frac{\pi}{6}) < \frac{[3]}{[2]} <2 \cos(\frac{\pi}{7}) <2$, which is not an allowed dimension.

Next, we increase the supertransitivity by two and run the odometer on
$\Gamma_{4321} = \left(\bigraph{bwd1v1v1v1v1p1p1v1x0x0p0x1x0v1x0duals1v1v1v1x2}, \bigraph{bwd1v1v1v1v1p1p1v1x0x0p0x1x0v1x0duals1v1v1v1x2}\right),$
but only for a few additional depths, obtaining the classification statement
$$\left(\Gamma_{4321},5,\cV_{\Gamma_{4321}},\{ \Gamma_{4621} \}\right)$$
with $\cV_{\Gamma_{4321}}$ the list of graph pairs appearing in the statement of the theorem and
\begin{align*}
\Gamma_{4621} & = \left(\bigraph{bwd1v1v1v1v1p1p1v1x0x0p0x1x0v1x0v1v1v1duals1v1v1v1x2v1v1}, \bigraph{bwd1v1v1v1v1p1p1v1x0x0p0x1x0v1x0v1v1v1duals1v1v1v1x2v1v1}\right)
\end{align*}
The output of the odometer appears in Figure \ref{fig:odometer-gamma2}.
\begin{figure}[!htb]
$$
\scalebox{0.7}{
\begin{tikzpicture}
\tikzset{grow=right,level distance=180pt}
\tikzset{every tree node/.style={draw,fill=white,rectangle,rounded corners,inner sep=2pt}}
\Tree
[.\node{$\!\!\begin{array}{c}\bigraph{bwd1v1v1v1v1p1p1v0x1x0p0x0x1v0x1duals1v1v1v1x2}\\\bigraph{bwd1v1v1v1v1p1p1v0x1x0p0x0x1v0x1duals1v1v1v1x2}\end{array}\!\!$};
	[.\node{$\!\!\begin{array}{c}\bigraph{bwd1v1v1v1v1p1p1v0x1x0p0x0x1v0x1v1duals1v1v1v1x2v1}\\\bigraph{bwd1v1v1v1v1p1p1v0x1x0p0x0x1v0x1v1duals1v1v1v1x2v1}\end{array}\!\!$};
		[.\node{$\!\!\begin{array}{c}\bigraph{bwd1v1v1v1v1p1p1v0x1x0p0x0x1v0x1v1v1duals1v1v1v1x2v1}\\\bigraph{bwd1v1v1v1v1p1p1v0x1x0p0x0x1v0x1v1v1duals1v1v1v1x2v1}\end{array}\!\!$};
			[.\node[fill=red!30]{$\!\!\begin{array}{c}\bigraph{bwd1v1v1v1v1p1p1v0x1x0p0x0x1v0x1v1v1v1duals1v1v1v1x2v1v1}\\\bigraph{bwd1v1v1v1v1p1p1v0x1x0p0x0x1v0x1v1v1v1duals1v1v1v1x2v1v1}\end{array}\!\!$};]]]]
\end{tikzpicture}
}
$$
\caption{The odometer, running on $\Gamma_{4321}$.}
\label{fig:odometer-gamma2}
\end{figure}

For the new weed we've produced, $\Gamma_{4621}$, we again split into cases.

Without any translation, we look at the vertex at depth $10$ and see that it has dimension
$$p(q) = q^{-10} - q^{-8} - 2 q^{-6} - 3 q^{-4} - 4q^{-2} - 6 - 4 q^2 - 3 q^4 - 
 2 q^6 - q^8 + q^{10}.$$
The largest real root of $p$ is at approximately $1.61501$, so for any $q$ in the relevant range, namely $1.6161 < q < \frac{1 + \sqrt{5}}{2}$, $p$ is strictly increasing. Moreover, $p(\frac{1 + \sqrt{5}}{2}) = 1$, so in fact the dimension of this vertex is always strictly less than $1$, which is not allowed.

Translating $\Gamma_{4621}$ by two, we find that the index of the graph is approximately $5.0062 > 5$.
\end{proof}

\begin{thm}
No subfactor with index below $5$ has principal graph which starts like $\bigraph{bwd1v1v1p1p1v1x0x0p0x1x0v1x0p0x1duals1v1v1x2}$.
\end{thm}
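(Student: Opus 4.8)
The plan is to follow the template of the two preceding theorems: treat separately the case in which the principal graph is the displayed graph $\Gamma_0$ (up to extension, with no translation) and the case of a nontrivial, necessarily even, translation. Note first that we cannot simply invoke the even quadruple point obstruction (Theorem~\ref{thm:evenquad}), since that is a statement about a principal \emph{graph pair} whereas here we constrain only one of the two graphs; so we will need arguments that use a single graph, namely a quantum-dimension count at a univalent vertex together with a Frobenius--Perron eigenvalue estimate. Since $\Gamma_0$ is supertransitive with a quadruple point, the dimensions of the vertices along its initial $A$-segment are the quantum integers $[1],[2],[3],\ldots$ in the variable $q$ with $\delta=q+q^{-1}$ the Frobenius--Perron eigenvalue, and the displayed adjacencies of $\Gamma_0$ past the quadruple point then determine the dimension of the univalent vertex at the far end of $\Gamma_0$.

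In the untranslated case I would compute that dimension as an explicit Laurent polynomial $p(q)$ in $q$, obtained by a short recursion off $[3]$ using $\delta[n]=[n-1]+[n+1]$ and the local shape of $\Gamma_0$ past the quadruple point. Because any extension of $\Gamma_0$ is the principal graph of a subfactor of index strictly between $4$ and $5$, and because $\Gamma_0$ already forces the eigenvalue above some $\delta_0>2$, the parameter $q$ is confined to a short interval of the form $(q_0,\frac{1+\sqrt5}{2})$ with $q_0$ slightly less than $\frac{1+\sqrt5}{2}$. I would then check, exactly as in the $\frac{[3]}{[2]}$ estimate used in the previous theorem, that throughout this interval $p(q)$ lies strictly between two consecutive values $2\cos(\frac{\pi}{n})$ and $2\cos(\frac{\pi}{n+1})$ — in particular strictly between $1$ and $2$ — so that $p(q)$ is never an allowed vertex dimension. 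This excludes every untranslated extension.

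For a nontrivial translation I would translate $\Gamma_0$ by two and check, by computing the characteristic polynomial of the resulting finite graph (or estimating its largest root numerically), that the Frobenius--Perron eigenvalue already exceeds $\sqrt5$, so the index exceeds $5$; since larger translations only increase the index for a graph of this shape, every translate by two or more is then excluded as well. Should the translate-by-two instead turn out still to have index below $5$, the fallback (already built into \S\ref{sec:running}) is to run the odometer on it, as was done for $\Gamma_{4321}$ and $\Gamma_{5321}$, and verify that no weeds survive.

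I expect the only real difficulty to be computational bookkeeping: a quadruple point makes the individual dimensions of its neighbours less rigid than a simple branch point, so one must use the displayed adjacencies of $\Gamma_0$ carefully to pin down $p(q)$, and then the numerical inequalities $2\cos(\frac{\pi}{n})<p(q)<2\cos(\frac{\pi}{n+1})$ must be verified cleanly over the relevant $q$-interval. There is no conceptual obstacle; the notion of allowed dimension, the $q$-range coming from index $\in(4,5)$, and the odometer as a backstop for translations are all already available.
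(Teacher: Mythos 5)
Your treatment of the translated case is fine and essentially matches the paper, which observes that the minimal translation already has index exactly $5$ (it is the principal graph of the $A_4\subset A_5$ group--subgroup subfactor), so every strictly larger translation exceeds $5$. The gap is in the untranslated case. There is not a single ``univalent vertex at the far end of $\Gamma_0$'': there are \emph{two} vertices at the maximal depth $5$, one on each of the two long legs emerging from the quadruple point at depth~$2$, and the recursion $\delta[n]=[n-1]+[n+1]$ together with the adjacencies of $\Gamma_0$ does \emph{not} determine their individual dimensions. What is determined is the dimension $[3]/[2]$ of the short leg's endpoint $\gamma$ at depth $3$ and the \emph{sum} $\dim\alpha+\dim\beta=[4]-[3]/[2]$ of the two depth-$3$ neighbours of the quadruple point on the long legs; how that sum splits between $\alpha$ and $\beta$ is a free parameter of the extension, and the dual data (which makes both depth-$4$ vertices self-dual rather than swapping them) forces no equality. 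So there is no Laurent polynomial $p(q)$ giving ``the dimension of the univalent vertex at the far end,'' and your proposed estimate $2\cos(\pi/n)<p(q)<2\cos(\pi/(n+1))$ has nothing to apply to.

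The paper's fix is exactly the device you would need but did not name: propagate the \emph{sum}, not an individual dimension, down the two long legs. The sum of the dimensions of the two depth-$5$ vertices is a Laurent polynomial in $q$, namely
$\bigl([7]-[5]-3[3]-2\bigr)/[2]=\frac{1-3q^4-5q^6-3q^8+q^{12}}{q^5+q^7}$,
and the paper checks that this is strictly less than $2$ for all $q$ in the relevant window $\bigl(1.59438,\ \frac{1+\sqrt5}{2}\bigr)$, so at least one of the two depth-$5$ vertices has dimension below $1$. That contradiction uses only the elementary floor $\dim\ge 1$, not the finer $2\cos(\pi/n)$ quantization you were reaching for --- which is just as well, since in an extension the depth-$5$ vertices need not remain univalent and so play a different role than the forced univalent vertex at sub-maximal depth that the two preceding theorems exploit.
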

\begin{proof}
If you translate this graph by $1$ then its graph norm is $5$ (and in fact the graph is the principal graph for the group-subgroup factor $A_4 \subset A_5$), so we need only consider extensions (rather than translates of extensions).  

We need only consider $q$ between $1.59438$ and $\frac{1+\sqrt{5}}{2}$.  In this range, at least one of the two vertices at depth $5$ will have dimension less than $1$, which is a contradiction.

To see this, notice that the sum of the dimensions of the two vertices at depth $5$ is 
$$\frac{1 - 3 q^4 - 5 q^6 - 3 q^8 + q^{12}}{q^5 + q^7}.$$
This is equal to $2$ near $q=0.61492$ and $q =1.62623$ and smaller than $2$ between those two values.  In particular, it is smaller than $2$ in the range that we are considering.
\end{proof}

\section{Future directions}
\label{sec:future}

As we explained in the introduction, this paper is the first step towards classifying subfactors of index less than $5$.  We complete this classification in a series of subsequent papers.  This project was developed at several Planar Algebra Programming Camps organized by the authors and Emily Peters, and hosted by Vaughan Jones. Further progress was made during a visit by the current authors with Masaki Izumi at Kyoto University. As a result the subsequent papers in the project have a variety of different authors.

All translates of the vines in our classification can all be eliminated (with four exceptions, corresponding to subfactors that actually exist: Haagerup, Asaeda-Haagerup, extended Haagerup, and 2221) by showing that the indices are non-cyclotomic by applying the results of \cite{1004.0665}.  This will be done in a forthcoming paper \cite{index5-part4} by David Penneys and James Tener.

There's not yet a uniform approach to eliminating the weeds, which we instead deal with separately.  In a joint paper with David Penneys and Emily Peters, we will prove that there are no subfactors (of any index) whose principal graphs begin like $\cB$, $\cC$, or $\cF$.  This paper uses several different ``triple point obstructions" coming from the theory of connections and from an identity proved by Vaughan Jones in \cite{quadratic}.  
The quadratic tangles identity does not eliminate the weed $\cB$ as there the rotational eigenvalue is $-1$ and the identity is automatically satisfied.  However, we are able to apply an ad hoc connections argument to remove that case.



In a forthcoming paper with Masaki Izumi and Vaughan Jones we will show that the only subfactor with principal graphs starting like either $\cQ$ or $\cQ'$ is the $3311$ subfactor, which is unique up to taking duals. The quadratic tangles technique from \cite{quadratic} can also be applied to graphs which begin with a quadruple point, and this approach readily rules out subfactors with principal graphs beginning like $\cQ$. A connections argument due to Izumi, followed by a number theoretic argument along the lines of \cite{1004.0665} shows that any principal graph starting like $\cQ'$ must in fact be the $3331$ graph. An involved argument then establishes that any subfactor with this principal graph must be (up to taking dual) the GHJ subfactor.

The final piece of the classification is the uniqueness of the $2221$ subfactor. This has recently been established in the Ph.D. thesis of Richard Han \cite{han-2221}, who was able to derive a set of generators and relations for the corresponding planar algebra directing from the principal graphs.



A natural related question is to consider subfactors of index exactly $5$.  Our techniques generalize easily to understanding possible principal graphs at index equal to $5$, however, the uniqueness problem becomes more difficult.  The techniques for constructing subfactors of integer index are somewhat different from those of non-integer index (in particular, Hopf algebraic techniques, and group cohomology) so we have avoided dealing with the index $5$ case in detail.  However, as was pointed out to us by Izumi, using the results from \cite{MR1491121} simplifies the situation immensely.  We now expect to be able to extend our classification to index equal to $5$.

\newcommand{\urlprefix}{}
\bibliographystyle{alpha}
\bibliography{../../bibliography/bibliography}

\newcommand{\noopsort}[1]{}\def\cprime{$'$} \def\cprime{$'$} \def\cprime{$'$}
\begin{thebibliography}{GdlHJ89}

\bibitem[AH99]{MR1686551}
Marta Asaeda and Uffe Haagerup.
\newblock Exotic subfactors of finite depth with {J}ones indices
  {$(5+\sqrt{13})/2$} and {$(5+\sqrt{17})/2$}.
\newblock {\em Comm. Math. Phys.}, 202(1):1--63, 1999.
\newblock \mathscinet{MR1686551} \doi{10.1007/s002200050574}
  \arxiv{math.OA/9803044}.

\bibitem[Asa07]{MR2307421}
Marta Asaeda.
\newblock Galois groups and an obstruction to principal graphs of subfactors.
\newblock {\em Internat. J. Math.}, 18(2):191--202, 2007.
\newblock \mathscinet{MR2307421} \doi{10.1142/S0129167X07003996}
  \arxiv{math.OA/0605318}.

\bibitem[AY09]{MR2472028}
Marta Asaeda and Seidai Yasuda.
\newblock On {H}aagerup's list of potential principal graphs of subfactors.
\newblock {\em Comm. Math. Phys.}, 286(3):1141--1157, 2009.
\newblock \mathscinet{MR2472028} \doi{10.1007/s00220-008-0588-0}
  \arxiv{0711.4144}.

\bibitem[Bis94]{MR1293872}
Dietmar Bisch.
\newblock An example of an irreducible subfactor of the hyperfinite {${\rm
  II}_1$} factor with rational, noninteger index.
\newblock {\em J. Reine Angew. Math.}, 455:21--34, 1994.
\newblock \arxiv{MR1293872}.

\bibitem[Bis98]{MR1625762}
Dietmar Bisch.
\newblock Principal graphs of subfactors with small {J}ones index.
\newblock {\em Math. Ann.}, 311(2):223--231, 1998.
\newblock \mathscinet{MR1625762} \doi{http://dx.doi.org/10.1007/s002080050185}.

\bibitem[BJ97]{MR1437496}
Dietmar Bisch and Vaughan Jones.
\newblock Algebras associated to intermediate subfactors.
\newblock {\em Invent. Math.}, 128(1):89--157, 1997.
\newblock \mathscinet{MR1437496}.

\bibitem[BMPS09]{0909.4099}
Stephen Bigelow, Scott Morrison, Emily Peters, and Noah Snyder.
\newblock Constructing the extended {H}aagerup planar algebra, 2009.
\newblock \arxiv{0909.4099}, to appear \emph{Acta Mathematica}.

\bibitem[BNP07]{MR2314611}
Dietmar Bisch, Remus Nicoara, and Sorin Popa.
\newblock Continuous families of hyperfinite subfactors with the same standard
  invariant.
\newblock {\em Internat. J. Math.}, 18(3):255--267, 2007.
\newblock \mathscinet{MR2314611} \arxiv{math.OA/0604460}
  \doi{10.1142/S0129167X07004011}.

\bibitem[BW99]{MR1686423}
John~W. Barrett and Bruce~W. Westbury.
\newblock Spherical categories.
\newblock {\em Adv. Math.}, 143(2):357--375, 1999.
\newblock \mathscinet{MR1686423} \arxiv{hep-th/9310164}
  \doi{10.1006/aima.1998.1800}.

\bibitem[CMS10]{1004.0665}
Frank Calegari, Scott Morrison, and Noah Snyder.
\newblock Cyclotomic integers, fusion categories, and subfactors, 2010.
\newblock With an appendix by Victor Ostrik. To appear in Communications in
  Mathematical Physics. \arxiv{1004.0665}.

\bibitem[GdlHJ89]{MR999799}
Frederick~M. Goodman, Pierre de~la Harpe, and Vaughan F.~R. Jones.
\newblock {\em Coxeter graphs and towers of algebras}, volume~14 of {\em
  Mathematical Sciences Research Institute Publications}.
\newblock Springer-Verlag, New York, 1989.
\newblock \mathscinet{MR999799}.

\bibitem[GL98]{MR1659204}
John~J. Graham and Gus~I. Lehrer.
\newblock The representation theory of affine {T}emperley-{L}ieb algebras.
\newblock {\em Enseign. Math. (2)}, 44(3-4):173--218, 1998.
\newblock \mathscinet{MR1659204}.

\bibitem[Haa94]{MR1317352}
Uffe Haagerup.
\newblock Principal graphs of subfactors in the index range
  {$4<[M:N]<3+\sqrt2$}.
\newblock In {\em Subfactors ({K}yuzeso, 1993)}, pages 1--38. World Sci. Publ.,
  River Edge, NJ, 1994.
\newblock \mathscinet{MR1317352} available at
  \url{http://tqft.net/other-papers/subfactors/haagerup.pdf}.

\bibitem[Han10]{han-2221}
Richard Han.
\newblock {\em A Construction of the Ò2221Ó Planar Algebra}.
\newblock PhD thesis, University of California, Riverside, 2010.
\newblock \arxiv{1102.2052}.

\bibitem[IJMS]{index5-part3}
Masaki Izumi, Vaughan F.~R. Jones, Scott Morrison, and Noah Snyder.
\newblock Classification of subfactors of index less than 5, part 3: quadruple
  points.
\newblock In preparation.

\bibitem[Izu91]{MR1145672}
Masaki Izumi.
\newblock Application of fusion rules to classification of subfactors.
\newblock {\em Publ. Res. Inst. Math. Sci.}, 27(6):953--994, 1991.
\newblock \mathscinet{MR1145672} \doi{10.2977/prims/1195169007}.

\bibitem[Izu97]{MR1491121}
Masaki Izumi.
\newblock Goldman's type theorems in index theory.
\newblock In {\em Operator algebras and quantum field theory ({R}ome, 1996)},
  pages 249--269. Int. Press, Cambridge, MA, 1997.
\newblock \mathscinet{MR1491121}.

\bibitem[Izu01]{MR1832764}
Masaki Izumi.
\newblock The structure of sectors associated with {L}ongo-{R}ehren inclusions.
  {II}. {E}xamples.
\newblock {\em Rev. Math. Phys.}, 13(5):603--674, 2001.
\newblock \mathscinet{MR1832764} \doi{10.1142/S0129055X01000818}.

\bibitem[Jon]{math.QA/9909027}
Vaughan F.~R. Jones.
\newblock {Planar algebras, I}.
\newblock \arxiv{math.QA/9909027}.

\bibitem[Jon80]{MR587749}
Vaughan F.~R. Jones.
\newblock Actions of finite groups on the hyperfinite type {${\rm II}_{1}$}\
  factor.
\newblock {\em Mem. Amer. Math. Soc.}, 28(237):v+70, 1980.
\newblock \mathscinet{MR715556}.

\bibitem[Jon83]{MR696688}
Vaughan F.~R. Jones.
\newblock Index for subfactors.
\newblock {\em Invent. Math.}, 72(1):1--25, 1983.
\newblock \mathscinet{MR696688} \doi{10.1007/BF01389127}.

\bibitem[Jon01]{MR1929335}
Vaughan F.~R. Jones.
\newblock The annular structure of subfactors.
\newblock In {\em Essays on geometry and related topics, {V}ol. 1, 2},
  volume~38 of {\em Monogr. Enseign. Math.}, pages 401--463. Enseignement
  Math., Geneva, 2001.
\newblock \mathscinet{MR1929335}.

\bibitem[Jon03]{quadratic}
Vaughan F.~R. Jones.
\newblock Quadratic tangles in planar algebras, 2003.
\newblock \arxiv{1007.1158}.

\bibitem[JR06]{MR2274519}
Vaughan F.~R. Jones and Sarah~A. Reznikoff.
\newblock Hilbert space representations of the annular {T}emperley-{L}ieb
  algebra.
\newblock {\em Pacific J. Math.}, 228(2):219--249, 2006.
\newblock \mathscinet{MR2274519} \doi{10.2140/pjm.2006.228.219}.

\bibitem[Kaw95]{MR1355948}
Yasuyuki Kawahigashi.
\newblock Classification of paragroup actions in subfactors.
\newblock {\em Publ. Res. Inst. Math. Sci.}, 31(3):481--517, 1995.
\newblock \mathscinet{MR1355948}.

\bibitem[KO02]{MR1936496}
Alexander Kirillov, Jr. and Viktor Ostrik.
\newblock On a {$q$}-analogue of the {M}c{K}ay correspondence and the {ADE}
  classification of {$\mathfrak{sl}_2$} conformal field theories.
\newblock {\em Adv. Math.}, 171(2):183--227, 2002.
\newblock \mathscinet{MR1936496} \arxiv{math.QA/0101219}
  \doi{10.1006/aima.2002.2072}.

\bibitem[MPPS10]{index5-part2}
Scott Morrison, David Penneys, Emily Peters, and Noah Snyder.
\newblock Classification of subfactors of index less than 5, part 2: triple
  points, 2010.
\newblock \arxiv{1007.2240}.

\bibitem[Ocn80]{MR596082}
Adrian Ocneanu.
\newblock Actions des groupes moyennables sur les alg\`ebres de von {N}eumann.
\newblock {\em C. R. Acad. Sci. Paris S\'er. A-B}, 291(6):A399--A401, 1980.
\newblock \mathscinet{MR596082}.

\bibitem[Ocn88]{MR996454}
Adrian Ocneanu.
\newblock Quantized groups, string algebras and {G}alois theory for algebras.
\newblock In {\em Operator algebras and applications, Vol.\ 2}, volume 136 of
  {\em London Math. Soc. Lecture Note Ser.}, pages 119--172. Cambridge Univ.
  Press, Cambridge, 1988.
\newblock \mathscinet{MR996454}.

\bibitem[Ocn94]{MR1317353}
Adrian Ocneanu.
\newblock Chirality for operator algebras.
\newblock In {\em Subfactors ({K}yuzeso, 1993)}, pages 39--63. World Sci.
  Publ., River Edge, NJ, 1994.
\newblock \mathscinet{MR1317353}.

\bibitem[Pop90]{MR1055708}
Sorin Popa.
\newblock Classification of subfactors: the reduction to commuting squares.
\newblock {\em Invent. Math.}, 101(1):19--43, 1990.
\newblock \mathscinet{MR1055708} \doi{10.1007/BF01231494}.

\bibitem[Pop91]{MR1159284}
Sorin Popa.
\newblock Subfactors and classification in von {N}eumann algebras.
\newblock In {\em Proceedings of the {I}nternational {C}ongress of
  {M}athematicians, {V}ol.\ {I}, {II} ({K}yoto, 1990)}, pages 987--996, Tokyo,
  1991. Math. Soc. Japan.
\newblock \mathscinet{MR1159284}.

\bibitem[Pop93]{MR1198815}
Sorin Popa.
\newblock Markov traces on universal {J}ones algebras and subfactors of finite
  index.
\newblock {\em Invent. Math.}, 111(2):375--405, 1993.
\newblock \mathscinet{MR1198815} \doi{10.1007/BF01231293}.

\bibitem[Pop94]{MR1278111}
Sorin Popa.
\newblock Classification of amenable subfactors of type {II}.
\newblock {\em Acta Math.}, 172(2):163--255, 1994.
\newblock \mathscinet{MR1278111} \doi{10.1007/BF02392646}.

\bibitem[Pop95]{MR1334479}
Sorin Popa.
\newblock An axiomatization of the lattice of higher relative commutants of a
  subfactor.
\newblock {\em Invent. Math.}, 120(3):427--445, 1995.
\newblock \mathscinet{MR1334479} \doi{10.1007/BF01241137}.

\bibitem[PT10]{index5-part4}
David Penneys and James Tener.
\newblock Classification of subfactors of index less than 5, part 4:
  cyclotomicity, 2010.
\newblock \arxiv{1010.3797}.

\bibitem[TV92]{MR1191386}
Vladimir~G. Turaev and Oleg~Ya. Viro.
\newblock State sum invariants of {$3$}-manifolds and quantum {$6j$}-symbols.
\newblock {\em Topology}, 31(4):865--902, 1992.
\newblock \mathscinet{MR1191386}.

\end{thebibliography}

This paper is available online at \arxiv{1007.1730}, and at
\url{http://tqft.net/index5-part1}.

\end{document}